\documentclass{lms-mod}

\usepackage[T1]{fontenc}
\usepackage[latin1]{inputenc}
\usepackage[english]{babel}
\usepackage{lmodern}

\usepackage{amssymb,amsfonts,amsmath,latexsym, mathtools}
\usepackage{enumerate, caption}
\usepackage[unicode]{hyperref}

\usepackage[margin=1.1in]{geometry}

\setlength{\unitlength}{1mm}

\newtheorem{theoreme}{Theorem}[section]
\newtheorem{coro}[theoreme]{Corollary}
\newtheorem{lemme}[theoreme]{Lemma}
\newtheorem{prop}[theoreme]{Proposition}
\newtheorem{thmext}{Theorem}

\newunnumbered{remarque}{Remark}

\newcommand{\cond}{{\rm cond}}
\newcommand{\prim}{\text{ primitive}}

\renewcommand{\bar}{\overline}

\newcommand{\ssum}[1]{\sum_{\substack{#1}}}

\newcommand{\e}{{\rm e}}
\newcommand{\dd}{{\rm d}}
\newcommand{\cA}{{\mathcal A}}
\newcommand{\cB}{{\mathcal B}}
\newcommand{\cE}{{\mathcal E}}
\newcommand{\cH}{{\mathcal H}}

\newcommand{\cC}{{\mathcal C}}
\newcommand{\cW}{{\mathcal W}}

\newcommand{\tphi}{{\tilde \phi}}
\newcommand{\dphi}{{\dot \phi}}
\newcommand{\cphi}{{\check \phi}}

\renewcommand{\hat}{\widehat}

\newcommand{\cD}{{\mathcal D}}

\newcommand{\cM}{{\mathcal M}}

\newcommand{\cJ}{{\mathcal J}}

\newcommand{\cT}{{\mathcal T}}
\newcommand{\cS}{{\mathcal S}}
\newcommand{\cL}{{\mathcal L}}
\newcommand{\cK}{{\mathcal K}}
\newcommand{\cX}{{\mathcal X}}
\newcommand{\cR}{{\mathcal R}}

\newcommand{\ca}{{\mathfrak a}}
\newcommand{\cb}{{\mathfrak b}}
\newcommand{\cc}{{\mathfrak c}}
\newcommand{\cu}{{\mathfrak u}}

\newcommand{\ee}{{\varepsilon}}
\newcommand{\qt}{\tilde{q}}
\newcommand{\chit}{\tilde{\chi}}

\newcommand{\bfH}{{\mathbf H}}
\newcommand{\bfN}{{\mathbf N}}
\newcommand{\bfZ}{{\mathbf Z}}

\newcommand{\bfR}{{\mathbf R}}
\newcommand{\bfC}{{\mathbf C}}
\newcommand{\bfUn}{{\mathbf 1}}
\newcommand{\Id}{{\rm Id}}

\newcommand{\bsl}{\backslash}

\newcommand{\BFI}{Bombieri--Friedlander--Iwaniec}
\newcommand{\Milicevic}{Mili{\'c}evi{\'c}}

\DeclareMathOperator{\li}{li}

\newcommand{\vth}{{\vartheta}}
\newcommand{\vphi}{{\varphi}}
\renewcommand{\tilde}{\widetilde}

\newcommand{\card}{{\rm card\ }}

\newcommand{\reg}{\text{\rm reg}}
\newcommand{\exc}{\text{\rm exc}}

\newcommand{\floor}[1]{{\left\lfloor {#1} \right\rfloor}}

\renewcommand{\mod}[1]{\ ({\rm mod\ }#1)}

\renewcommand\Re{\operatorname{\mathfrak{Re}}}
\renewcommand\Im{\operatorname{\mathfrak{Im}}}

\newcommand\numberthis{\addtocounter{equation}{1}\tag{\theequation}}

\classno{11L07 (primary), 11F30, 11N75, 11N13}

\numberwithin{equation}{section}

\title[Error term in the dispersion method]{Sums of Kloosterman sums in arithmetic progressions, \\ and the error term in the dispersion method}
\author{Sary Drappeau}
\date{\today}

\setcounter{tocdepth}{1}

\begin{document}

\begin{abstract}
We prove a bound for quintilinear sums of Kloosterman sums, with congruence conditions on the ``smooth'' summation variables. This generalizes classical work of Deshouillers and Iwaniec, and is key to obtaining power-saving error terms in applications, notably the dispersion method.

As a consequence, assuming the Riemann hypothesis for Dirichlet~$L$-functions, we prove power-saving error term in the Titchmarsh divisor problem of estimating~$\sum_{p\leq x}\tau(p-1)$. Unconditionally, we isolate the possible contribution of Siegel zeroes, showing it is always negative. Extending work of Fouvry and Tenenbaum, we obtain power-saving in the asymptotic formula for~$\sum_{n\leq x}\tau_k(n)\tau(n+1)$, reproving a result announced by Bykovski{\u\i} and Vinogradov by a different method. The gain in the exponent is shown to be independent of~$k$ if a generalized Lindelöf hypothesis is assumed.
\end{abstract}

\maketitle

\tableofcontents


\section{Introduction}

Understanding the joint multiplicative structure of pairs of neighboring integers such as~$(n, n+1)$ is an outstanding problem in multiplicative number theory. A quantitative way to look at this question is to try to estimate sums of the type
\begin{equation}
\sum_{n\leq x} f(n) g(n+1)\label{eq:question-corre}
\end{equation}
when~$f, g : \bfN\to\bfC$ are two functions that are of multiplicative nature -- multiplicative functions for instance, or the characteristic function of primes. In this paper we are motivated by two instances of the question~\eqref{eq:question-corre}: the Titchmarsh divisor problem, and correlation of divisor functions.

In what follows,~$\tau(n)$ denotes the number of divisors of the integer~$n$, and more generally,~$\tau_k(n)$ denotes the number of ways one can write~$n$ as a product of~$k$ positive integers. Studying the function~$\tau_k$ gives some insight into the factorisation of numbers\footnote{There are a number of formulas relating the characteristic function of primes to linear combination of divisor-like functions, for instance Heath-Brown's identity~\cite{HB3}.}, which is deeper but more difficult to obtain as~$k$ grows.

\subsection{The Titchmarsh divisor problem}

One would like to be able to evaluate, for~$k\geq 2$, the sum
\begin{equation}
\sum_{p\leq x}\tau_k(p-1)\label{eq:somme-TM}
\end{equation}
where~$p$ denotes primes. \textit{A priori}, this would require understanding primes up to~$x$ in arithmetic progressions of moduli up to~$x^{1-1/k}$. The case~$k\geq 3$ seems far from reach of current methods, so we consider~$k=2$.

In place of~\eqref{eq:somme-TM}, one may consider
\[ T(x) := \sum_{1<n\leq x}\Lambda(n)\tau(n-1) \]
where~$\Lambda$ is the von Mangoldt function~\cite[formula~(1.39)]{IK}. In 1930, Titchmarsh~\cite{Titchmarsh} first considered the problem, and proved~$T(x)\sim C_1x\log x$ for some constant~$C_1>1$ under the assumption that the Riemann hypothesis holds for all Dirichlet $L$-functions. This asymptotic was proved unconditionally by Linnik~\cite{Linnik} using his so-called dispersion method. Simpler proofs were later given by Rodriquez~\cite{Rodriquez} and Halberstam~\cite{Halberstam} using the theorems of Bombieri-Vinogradov and Brun-Titchmarsh. Finally the most precise known estimate was proved independently by \BFI~\cite{BFI} and Fouvry~\cite{FouvryTit}. To state their result, let us denote
\[ C_1 := \prod_p\Big(1+\frac1{p(p-1)}\Big), \qquad C_2 := \sum_p\frac{\log p}{1+p(p-1)} .\]
\begin{thmext}[Fouvry~\cite{FouvryTit}, \BFI~\cite{BFI}]\label{thmext:TM}
For all~$A>0$ and all~$x\geq 3$,
\[ T(x) = C_1x\big\{\log x + 2\gamma-1 - 2C_2\big\} + O_A\big(x/(\log x)^A\big) .\]
\end{thmext}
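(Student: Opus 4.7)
The plan is to apply Dirichlet's hyperbola method to reduce $T(x)$ to a sum of $\psi(y;d,1)$ over moduli $d \leq \sqrt{x}$, invoke the Bombieri--Vinogradov theorem for the aggregated error, and extract the asymptotic by a careful main-term computation.

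First, using $\tau(m) = 2\sum_{d \mid m,\, d \leq \sqrt{m}} 1$ up to a $\{0,-1\}$ correction for squares, applied with $m = n-1$, one interchanges summations to obtain
$$T(x) = 2\sum_{d \leq \sqrt{x}} \big(\psi(x;d,1) - \psi(d^2;d,1)\big) + O\big(\sqrt{x}\log x\big),$$
the remainder absorbing the contribution of squares and of the boundary $n \leq d^2+1$. The coprimality condition $(1,d)=1$ is automatic.

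Second, write $\psi(y;d,1) = \psi(y)/\varphi(d) + E(y;d,1)$. The Bombieri--Vinogradov theorem gives $\sum_{d \leq \sqrt{x}} \max_{y \leq x} |E(y;d,1)| \ll_A x(\log x)^{-A}$, which amply handles the contributions of both $E(x;d,1)$ and $E(d^2;d,1)$. We are reduced to evaluating
$$M(x) := 2\sum_{d \leq \sqrt{x}} \frac{\psi(x) - \psi(d^2)}{\varphi(d)}.$$

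Third, for $M(x)$ we combine $\psi(x) = x + O_A(x(\log x)^{-A})$ with the classical Landau-type expansion
$$\sum_{d \leq y} \frac{1}{\varphi(d)} = C_1\big(\log y + \gamma - C_2'\big) + O\big((\log y)/y\big),$$
where $C_2'$ is an Euler-product constant involving $\{\log p/(p^2-p+1)\}_p$. This already produces the leading term $C_1 x \log x$. For the subtracted piece, partial summation starting from $\psi(d^2) = d^2 + O_A(d^2(\log d)^{-A})$, combined with the standard $\sum_{d \leq y} d^2/\varphi(d) = \kappa y^2/2 + O(y \log y)$ (derived from the factorisation of its Dirichlet series through $\zeta(s-1)$), yields a further linear-in-$x$ correction. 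Assembling everything and simplifying the Euler products must reproduce the stated constant $2\gamma - 1 - 2C_2$.

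The real obstacle is the arithmetic bookkeeping: multiple Mertens-type constants coming from $\sum 1/\varphi(d)$, from the $\psi(d^2)$ subtraction, and from $\sum_p \log p/((p-1)p)$ must reconcile in such a way that the constant $C_2 = \sum_p \log p/(1+p(p-1))$ emerges cleanly, with the numerical factor $-1$ inside the braces arising from the partial-summation integration by parts. No analytic input deeper than Bombieri--Vinogradov is needed for this qualitative statement; the power-saving refinements that motivate the rest of the paper are instead the province of the dispersion-method machinery developed below, which requires distribution of primes in arithmetic progressions beyond the Bombieri--Vinogradov range $d \leq x^{1/2}$.
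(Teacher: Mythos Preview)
Your hyperbola reduction in step~1 is correct and matches the paper's approach (see Section~\ref{sec:deduction-TM}). The genuine gap is in step~2: the Bombieri--Vinogradov theorem does \emph{not} give
\[
\sum_{d\leq\sqrt{x}}\max_{y\leq x}|E(y;d,1)|\ll_A \frac{x}{(\log x)^A}.
\]
It only yields this for $d\leq \sqrt{x}/(\log x)^{B}$ with $B=B(A)$. The leftover range $\sqrt{x}/(\log x)^B < d \leq \sqrt{x}$ cannot be handled trivially: already the expected main term $2\sum_{d}(x-d^2)/\vphi(d)$ over that range is of size $\asymp x\log\log x$, far larger than the target error. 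The paper states this explicitly in Section~2: ``It is crucial that the uniformity be good enough to average over~$q\leq\sqrt{x}$ \dots\ The celebrated theorem of Bombieri--Vinogradov \dots\ barely fails to be useful.''

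What is actually needed---and is the content of the Fouvry and \BFI{} papers cited---is a signed average of the type
\[
\sum_{q\leq Q}\Big(\psi(x;q,1)-\frac{\psi_q(x)}{\vphi(q)}\Big)\ll_A \frac{x}{(\log x)^A}
\]
valid for $Q$ up to (and slightly beyond) $\sqrt{x}$. This exploits cancellation across~$q$ and is proved via the dispersion method together with bounds for sums of Kloosterman sums; it is strictly deeper than Bombieri--Vinogradov. Your remark that ``no analytic input deeper than Bombieri--Vinogradov is needed'' is therefore incorrect for Theorem~\ref{thmext:TM} as stated. The simpler BV-based arguments of Rodriquez and Halberstam that the introduction mentions give only the leading asymptotic $T(x)\sim C_1 x\log x$, not the full secondary term with arbitrary log-power saving.

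Your step~3 (main-term computation) is the right shape; in the paper this is carried out by quoting \cite[Lemme~6]{Fouvry82} once the correct equidistribution input (Proposition~\ref{prop:majo-q2} here, or the analogous results in \cite{FouvryTit,BFI} for Theorem~\ref{thmext:TM}) is in hand.
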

In this statement,~$\gamma$ denotes Euler's contant. See also~\cite{Felix,Fiorilli2} for generalizations in arithmetic progressions; and~\cite{ABR} for an analogue in function fields.

The error term in Theorem~\ref{thmext:TM} is due to an application of the Siegel-Walfisz theorem~\cite[Corollary~5.29]{IK}. One could wonder whether assuming the Riemann Hypothesis generalized to Dirichlet $L$-functions (GRH) allows for power-saving error term to be obtained (as is the case for the prime number theorem in arithmetic progressions~\cite[Corollary~13.8]{MontgomeryVaughan}). The purpose of this paper is to prove that such is indeed the case.
\begin{theoreme}\label{thm:TM-cond}
Assume GRH. Then for some~$\delta>0$ and all~$x\geq 2$,
\[ T(x) = C_1x\big\{\log x + 2\gamma-1-2C_2\big\} + O(x^{1-\delta}) .\]
\end{theoreme}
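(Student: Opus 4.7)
My plan is to follow the classical strategy going back to Titchmarsh based on the Dirichlet hyperbola trick, but to replace the Siegel--Walfisz input (which costs logarithms) by two ingredients: pointwise GRH for small moduli, and a power-saving Bombieri--Vinogradov type estimate for moduli close to~$\sqrt{x}$. The latter is the heart of the matter and should come out of the dispersion-method framework developed in the body of this paper, combined with the quintilinear Kloosterman sum bound announced in the abstract.

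Concretely, the identity $\tau(n-1)=2\,\#\{d\mid n-1:d^2<n-1\}+O(\mathbf 1_{n-1\in\square})$ rearranges into
\[ T(x)=2\sum_{d<\sqrt{x-1}}\bigl(\psi(x;d,1)-\psi(d^2+1;d,1)\bigr)+O(x^{1/2}\log x), \]
where $\psi(y;d,1)$ stands for $\sum_{n\leq y}\Lambda(n)$ restricted to the progression $n\equiv 1\pmod{d}$. For moduli $d\leq x^{1/2-\eta}$ with a small $\eta>0$, the explicit formula under GRH produces $\psi(y;d,1)=y/\varphi(d)+O(x^{1/2}(\log x)^2)$ uniformly for $y\leq x$, so these moduli contribute $\sum_{d\leq x^{1/2-\eta}}2(x-d^2-1)/\varphi(d)$ up to an acceptable error $O(x^{1-\eta}(\log x)^2)$.

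The difficult range is $x^{1/2-\eta}<d<\sqrt{x-1}$, where pointwise GRH would cost $x(\log x)^2$. Here I would invoke, conditionally on GRH, a power-saving mean estimate of the form
\[ \sum_{D<d\leq 2D}\bigl|\psi(x;d,1)-x/\varphi(d)\bigr|\ll x^{1-\delta},\qquad x^{1/2-\eta}\leq D\leq\sqrt{x}, \]
for some fixed $\delta>0$. To establish such a bound one decomposes $\Lambda$ by a Heath-Brown or Vaughan identity into type~I and type~II convolutions, writes the type~II contribution through Linnik's dispersion identity, and transforms the resulting off-diagonal sums over $d$ into averages of Kloosterman sums via Poisson summation and reciprocity on the $d$-variable. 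The diagonal term is handled by GRH; the off-diagonal, after opening Kloosterman sums and applying the Kuznetsov formula together with the spectral large sieve, falls into the scope of the paper's quintilinear Kloosterman sum estimate, which then supplies the required cancellation. A dyadic decomposition over $D$ yields the total error $O(x^{1-\delta'})$ for some smaller but still positive $\delta'$.

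The surviving main term $\sum_{d<\sqrt{x-1}}2(x-d^2-1)/\varphi(d)$ is evaluated by standard Dirichlet series methods: the series $\sum_{d\geq 1}1/(\varphi(d)d^s)$ has an Euler product that relates to $\zeta(s+1)$, and Perron's formula (or equivalently partial summation from the known asymptotic for $\sum_{d\leq z}1/\varphi(d)$) produces the advertised constants $C_1$ and $2\gamma-1-2C_2$, with a remainder~$O(x^{1/2}\log x)$. \emph{The principal obstacle} is the estimate of Step~3: GRH alone controls only the diagonal of the dispersion, and any power-saving for the full $d$-sum up to~$\sqrt{x}$ must come from nontrivial cancellation in off-diagonal averages of Kloosterman sums. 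Any sub-power-of-$x$ loss in the paper's quintilinear bound would immediately degrade the exponent $\delta$ in Theorem~\ref{thm:TM-cond}.
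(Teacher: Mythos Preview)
Your outline is broadly correct and would succeed under GRH, but it differs from the paper's route in one structural respect. You split the sum over moduli $d$ into a small range $d\leq x^{1/2-\eta}$ (handled pointwise by GRH) and a large range $x^{1/2-\eta}<d<\sqrt{x}$ (handled on average by dispersion, with GRH supplying the Siegel--Walfisz input for the diagonal). The paper instead keeps all moduli $q\leq\sqrt{x}$ together and splits by the \emph{conductor} of the characters: it introduces
\[
\cu_R(n;q)=\bfUn_{n\equiv 1\pmod q}-\frac1{\vphi(q)}\sum_{\substack{\chi\bmod q\\ \cond(\chi)\leq R}}\chi(n),
\]
runs the Heath--Brown identity and the dispersion method against $\cu_R$ for $R=x^\delta$ (Theorem~\ref{thm:distrib-convo} and Proposition~\ref{prop:majo-q2}), and only \emph{afterwards} feeds GRH into the remaining small-conductor sum. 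The payoff is that the dispersion step then needs no equidistribution hypothesis on the sequences at all: the main-term comparison $X_1-X_3$ is bounded purely by the multiplicative large sieve, saving $R^{-2}$. This is what lets the same machinery yield the unconditional Theorem~\ref{thm:TM-uncond}; in your arrangement GRH is woven into the dispersion itself, so the unconditional statement would not fall out for free. Two smaller points: you do not need absolute values in your mean estimate over $D<d\leq 2D$, since the hyperbola identity gives a signed sum (and indeed Proposition~\ref{prop:majo-q2} is signed); and the specific reason the off-diagonal now saves a power of $x$ rather than $\e^{-\delta\sqrt{\log x}}$ is that Theorem~\ref{thm:quintilin} allows congruence conditions on the smooth variables in the Deshouillers--Iwaniec sum, which is exactly what lets one dispense with the preliminary sieve of~\cite{BFI,FouvryTit}.
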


Unconditionally, we quantify the influence of hypothetical Siegel zeroes. Define, for~$q\geq 1$,
$$ C_1(q) := \frac1{\vphi(q)}\prod_{p\nmid q} \Big( 1 + \frac1{p(p-1)}\Big), \qquad
C_2(q) := \sum_{p\nmid q}\frac{\log p}{1+p(p-1)} $$
where~$\vphi$ is Euler's totient function. Note that~$C_1 = C_1(1)$ and~$C_2 = C_2(1)$.
\begin{theoreme}\label{thm:TM-uncond}
There exist~$b>0$ and~$\delta>0$ such that
\begin{align*}
T(x) = &\ C_1x\big\{\log x + 2\gamma-1-2C_2 \big\} \\
&\ - C_1(q)\frac{x^\beta}{\beta}\big\{\log\big(\frac x{q^2}\big) +  2\gamma-\frac1{\beta}-2C_2(q)\big\} + O\big(x\e^{-\delta\sqrt{\log x}}\big).
\end{align*}
The second term is only to be taken into account if there is a primitive character~$\chi\mod{q}$ with~$q\leq \e^{\sqrt{\log x}}$ whose Dirichlet $L$-function has a real zero~$\beta$ with~$\beta \geq 1-b/\sqrt{\log x}$.
\end{theoreme}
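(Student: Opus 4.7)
\emph{Reduction.} Following the strategy of Fouvry and Bombieri--Friedlander--Iwaniec behind Theorem~\ref{thmext:TM}, the plan is to apply the Dirichlet hyperbola identity to~$\tau(n-1)$. Discarding the square diagonal, whose contribution is~$O(\sqrt x\log x)$, reduces the problem to
\[ T(x) = 2\Sigma(x) + O(\sqrt x\log x), \qquad \Sigma(x) := \sum_{d\leq\sqrt x}\tilde\psi_d(x), \]
where~$\tilde\psi_d(x) := \sum_{d^2 < n\leq x,\ n\equiv 1 \mod{d}}\Lambda(n)$. One expects
\[ \tilde\psi_d(x) \approx \frac{x - d^2}{\vphi(d)} - \bfUn(q\mid d)\,\frac{x^\beta - d^{2\beta}}{\vphi(d)\,\beta}, \]
the Siegel correction being present precisely when the modulus~$d$ is a multiple of the conductor~$q$ of the exceptional character.

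\emph{Main and Siegel terms.} Summing the heuristic over~$d\leq\sqrt x$ is elementary. One uses the classical
\[ \sum_{d\leq y}\frac{1}{\vphi(d)} = C_1\bigl(\log y + \gamma - C_2\bigr) + O\Bigl(\frac{\log y}{y}\Bigr), \]
together with the~$q$-restricted analogue that produces~$C_1(q)$ and~$C_2(q)$, and Abel summation. The~$-1$ inside~$\log x + 2\gamma - 1 - 2C_2$ originates from~$\sum_{d\leq\sqrt x}d^2/\vphi(d) = (C_1/2)x + \cdots$, while the~$-1/\beta$ in the Siegel bracket arises analogously from~$\sum_{q\mid d,\,d\leq\sqrt x}d^{2\beta}/\vphi(d) = C_1(q)x^\beta/(2\beta) + \cdots$. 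The hyperbola factor of~$2$ converts~$\log\sqrt x$ into~$\log x$ and~$\log(\sqrt x/q)$ into~$\log(x/q^2)$, reproducing the two bracketed factors of the theorem exactly.

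\emph{Error bound.} It remains to prove~$\sum_{d\leq\sqrt x}E_d(x)\ll x\exp(-\delta\sqrt{\log x})$, where
\[ E_d(x) := \tilde\psi_d(x) - \frac{x-d^2}{\vphi(d)} + \bfUn(q\mid d)\,\frac{x^\beta - d^{2\beta}}{\vphi(d)\,\beta}. \]
Split the summation at~$D_0 := \exp(c\sqrt{\log x})$. For~$d\leq D_0$, the character decomposition of~$\tilde\psi_d$, combined with the Landau--Page theorem---which isolates~$\beta$ as the only possible zero of an~$L$-function of conductor~$\leq D_0$ in the region~$\Re(s) > 1 - c'/\sqrt{\log x}$---yields~$|E_d(x)|\ll x\exp(-c''\sqrt{\log x})$ pointwise, summing to~$\ll D_0\cdot x\exp(-c''\sqrt{\log x})$, which is acceptable for~$c$ small enough. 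The range~$D_0 < d\leq\sqrt x$ lies past Bombieri--Vinogradov (and even a power-of-log saving would fall short of the target). Here one decomposes~$\Lambda$ via Heath-Brown's identity into~$O(\log^6 x)$ convolutions of types~I and~II, opens the congruence condition by Fourier/Poisson summation, and estimates the resulting averages of bilinear exponential sums by the paper's main quintilinear Kloosterman-sum bound; this delivers a power-saving contribution~$\ll x^{1-\delta}$, easily absorbed into the final error. The main obstacle is precisely this last step: arranging the Heath-Brown pieces so that each bilinear form satisfies the precise coprimality, smooth-cutoff and uniformity-in-modulus hypotheses of the quintilinear Kloosterman-sum theorem, uniformly up to~$d\sim\sqrt x$, is the technical heart of the paper, and is what the new equidistribution theorem is engineered to overcome.
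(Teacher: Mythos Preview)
Your skeleton (hyperbola identity, elementary evaluation of the main and Siegel terms) is fine and matches the paper. The gap is in your treatment of the range~$D_0<d\leq\sqrt{x}$.

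Your~$E_d(x)$ subtracts only the principal and the exceptional character; what remains is the contribution of \emph{all} other characters~$\chi\mod d$, including those of small conductor. The dispersion method as developed in the paper (Theorem~\ref{thm:distrib-convo}) does not bound this quantity. It bounds sums involving~$\cu_R(n;q)$, where~$\cu_R$ subtracts \emph{every} character of conductor~$\leq R$, and the saving it produces is~$(\log x)^{O(1)}R^{-1}$, not a power of~$x$. That~$R^{-1}$ arises in the main-term comparison~$X_1-X_3$ (see~\eqref{eq:def-X1},~\eqref{eq:def-X3}), via the multiplicative large sieve over characters of conductor~$>R$. If one subtracts only principal plus exceptional, the corresponding main-term difference is~$\asymp N^2(\log x)^{O(1)}$ with no saving, and after Cauchy--Schwarz one recovers only the trivial bound~$x(\log x)^{O(1)}$. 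So the claimed power saving~$x^{1-\delta}$ for this range does not hold; indeed if it did, Theorem~\ref{thm:TM-cond} would be unconditional.

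The paper does not split by modulus at all. For the full range~$d\leq\sqrt{x}$ it splits by \emph{conductor}: the large-conductor part~$\cS_1^+$ (conductor~$>R$) is bounded via Heath-Brown's identity and Theorem~\ref{thm:distrib-convo} (together with Lemma~\ref{lemme:motohashi} and~\cite[Theorem~7]{BFI} for the other combinatorial cases), yielding~$\cS_1^+\ll x(\log x)^{O(1)}R^{-1/9}$. The small-conductor part~$\cS_1^-$ contains the principal and exceptional characters (giving the two displayed main terms) plus all remaining characters of conductor~$\leq R$, each of which is handled pointwise by the Landau--Page zero-free region, contributing~$\ll Rx(\log x)\e^{-c\sqrt{\log x}}$. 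The choice~$R=\e^{c\sqrt{\log x}/2}$ balances the two. This is precisely why the unconditional error is~$x\e^{-\delta\sqrt{\log x}}$ rather than a power saving: the bottleneck is the small-conductor non-exceptional characters, limited by the classical zero-free region, not by the Kloosterman input.
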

By partial summation, one deduces
\begin{coro}\label{coro:TM-partial}
In the same notation as Theorem~\ref{thm:TM-uncond},
$$ \sum_{p\leq x}\tau(p-1) = C_1\{x+2\li(x)(\gamma-C_2)\} - C_1(q)\{\frac{x^\beta}{\beta}+2\li(x^\beta)(\gamma-\log q - C_2(q))\} + O(x\e^{-\delta\sqrt{\log x}}). $$
\end{coro}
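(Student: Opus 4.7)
The plan is to deduce Corollary~\ref{coro:TM-partial} from Theorem~\ref{thm:TM-uncond} by Abel summation. Setting $\theta(x) := \sum_{p\leq x}(\log p)\tau(p-1)$, the difference $T(x)-\theta(x) = \sum_{k\geq 2,\ p^k\leq x}(\log p)\tau(p^k-1)$ is bounded by $O(x^{1/2+\varepsilon})$ using the trivial estimate $\tau(m)\ll_\varepsilon m^\varepsilon$, so $\theta(x)$ obeys the same asymptotic as $T(x)$, up to the error already present.

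Abel summation gives
\[
\sum_{p\leq x}\tau(p-1) = \frac{\theta(x)}{\log x} + \int_2^x \frac{\theta(t)}{t(\log t)^2}\,\dd t.
\]
Denote the two pieces of the main term of $T$ by $M_1(t) := C_1 t(\log t+2\gamma-1-2C_2)$ and $M_2(t) := C_1(q)\frac{t^\beta}{\beta}(\log(t/q^2)+2\gamma-1/\beta-2C_2(q))$. Substituting $\theta(t) = M_1(t) - M_2(t) + O(t\e^{-\delta\sqrt{\log t}})$ and integrating term by term, the $M_1$ contribution is handled by splitting off the $t\log t$ and $t$ parts and applying the one-integration-by-parts identity
\[
\li(x) = \frac{x}{\log x} + \int_2^x \frac{\dd t}{(\log t)^2} + O(1),
\]
producing exactly $C_1\{x + 2\li(x)(\gamma-C_2)\}$. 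The Siegel piece $-M_2(t)$ is treated identically via the analogue
\[
\li(x^\beta) = \frac{x^\beta}{\beta\log x} + \frac{1}{\beta}\int_2^x \frac{t^{\beta-1}}{(\log t)^2}\,\dd t + O(1),
\]
obtained from $\frac{\dd}{\dd t}\li(t^\beta) = t^{\beta-1}/\log t$ and one integration by parts; collecting terms yields $-C_1(q)\{x^\beta/\beta + 2\li(x^\beta)(\gamma-\log q-C_2(q))\}$, matching the claim.

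The error $O(t\e^{-\delta\sqrt{\log t}})$ integrates to $O(x\e^{-\delta'\sqrt{\log x}})$ for any $\delta'<\delta$. The one technical subtlety is that Theorem~\ref{thm:TM-uncond} inserts the $-M_2$ correction at scale $t$ only when $q\leq \e^{\sqrt{\log t}}$; for $t<\e^{(\log q)^2}$ the correction is absent, so including $-M_2$ throughout the integral introduces a discrepancy of size $\int_2^{\e^{(\log q)^2}}|M_2(t)|\,\dd t/(t(\log t)^2)$. Using the factor $1/\vphi(q)$ in $C_1(q)$, the constraint $q\leq \e^{\sqrt{\log x}}$, and the routine estimate $\int_2^X t^{\beta-1}/(\log t)^2\,\dd t \asymp X^\beta/(\beta(\log X)^2)$, this discrepancy is $\ll x\e^{-\sqrt{\log x}}/\log x$, hence absorbed into the overall error. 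This threshold bookkeeping is the only non-routine point; everything else reduces to standard manipulations with $\li$ and partial summation.
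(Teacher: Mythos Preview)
Your approach via Abel summation is exactly what the paper intends (it says only ``by partial summation, one deduces'' and later ``Corollary~\ref{coro:TM-partial} is straightforward''), and your main--term computations for $M_1$ and $M_2$ are correct.

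There is, however, a gap in your threshold bookkeeping. You argue that for $t<\e^{(\log q)^2}$ the Siegel correction ``is absent'', and you bound only the cost of having inserted $M_2(t)$ on that range. But Theorem~\ref{thm:TM-uncond} applied at scale $t$ may still carry an exceptional term coming from a \emph{different} primitive real character $\chi_t\bmod q_t$ with $q_t\leq \e^{\sqrt{\log t}}$ and zero $\beta_t\geq 1-b/\sqrt{\log t}$; that correction, call it $M_2^{(t)}(t)$, enters $\theta(t)$ and must also be shown to be negligible. (The same issue arises if no $x$-exceptional character exists but some $t$-exceptional one does.)

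The fix is short. Since $q_t\leq \e^{\sqrt{\log t}}\leq \e^{\sqrt{\log x}}$ and $\chi_t$ is not the $x$-exceptional character, Page's theorem at scale $x$ forces $\beta_t<1-b/\sqrt{\log x}$. Hence uniformly
\[
|M_2^{(t)}(t)|\ \ll\ \frac{t^{\beta_t}}{\vphi(q_t)}\log t\ \ll\ t^{1-b/\sqrt{\log x}}\log t,
\]
and therefore
\[
\int_2^{x}\frac{|M_2^{(t)}(t)|}{t(\log t)^2}\,\dd t\ \ll\ \int_2^{x}\frac{t^{-b/\sqrt{\log x}}}{\log t}\,\dd t\ \ll\ \frac{x\,\e^{-b\sqrt{\log x}}}{\log x},
\]
which is absorbed into the error. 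With this extra sentence your argument is complete.
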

The method readily allows for more general shifts~$\tau(p-a)$, $0<|a|\leq x^\delta$ (\textit{cf.}~\cite[Corollary~3.4]{Fiorilli} for results on the uniformity in~$a$). The contribution of the exceptional character in Corollary~\ref{coro:TM-partial} would then have a twist by~$\chi(a)$. Since~$\chi$, if it exists, is a real character, then~$\chi(a)=1$ whenever~$a$ is a perfect square (for instance~$a=1$), in which case we have an unconditional inequality.
\begin{coro}\label{coro:TM-onesided}
With an effective implicit constant, we have
$$ \sum_{p\leq x}\tau(p-1) \leq C_1\{x + 2\li(x)(\gamma-C_2)\} + O(x\e^{-\delta\sqrt{\log x}}). $$
\end{coro}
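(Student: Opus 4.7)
The plan is to read off the one-sided bound directly from Corollary~\ref{coro:TM-partial}: since $a = 1$ and any real character satisfies $\chi(1) = 1$, the hypothetical Siegel-zero contribution appears in Corollary~\ref{coro:TM-partial} as a quantity \emph{subtracted} from the main term, so it suffices to show that this subtracted quantity is itself non-negative in the regime where it is asserted to appear. The claimed upper bound then follows by simply dropping it.

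Concretely, $C_1(q) > 0$ is immediate from its Euler product, so the sign of the Siegel contribution is the sign of
\[ B := \frac{x^\beta}{\beta} + 2\li(x^\beta)\big(\gamma - \log q - C_2(q)\big). \]
I would insert the standard expansion $\li(y) = y/\log y + O(y/(\log y)^2)$ with $y = x^\beta$, then factor out $x^\beta/\beta$ to rewrite
\[ B = \frac{x^\beta}{\beta}\Big(1 + \frac{2(\gamma - \log q - C_2(q))}{\log x} + O\big(1/\log x\big)\Big). \]
The decisive input is the hypothesis $q \leq \e^{\sqrt{\log x}}$, which gives $\log q \leq \sqrt{\log x}$, while $C_2(q) = O(1)$ uniformly. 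Hence the middle correction in the parenthesis is $O(1/\sqrt{\log x})$, so $B > 0$ for all sufficiently large $x$, and the inequality is trivial on any bounded range by enlarging the implicit constant.

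There is no real obstacle: the substantive analytic work is already packaged into Theorem~\ref{thm:TM-uncond} and Corollary~\ref{coro:TM-partial}, while the present step is a short sign check that exploits the constraint $q \leq \e^{\sqrt{\log x}}$ built into the hypothesis. Effectivity of the implicit constant transfers from Corollary~\ref{coro:TM-partial} since nothing in the above analysis depends on the unknown location of $\beta$ or on the specific exceptional modulus $q$; only the crude upper bound on $q$ is used.
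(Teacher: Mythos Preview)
Your proposal is correct and follows essentially the same route as the paper: both arguments show the subtracted Siegel term~$C_1(q)B$ is non-negative by noting that~$\beta\li(x^\beta)/x^\beta \sim (\log x)^{-1}$ while~$\log q + C_2(q) - \gamma = O(\sqrt{\log x})$, so the correction to~$1$ in the factored expression for~$B$ is~$o(1)$ effectively. The paper phrases this as the ratio~$(\log \qt + C_2(\qt) - \gamma)\big/\big(x^\beta/(\beta\li(x^\beta))\big) \to 0$ and then observes it is below~$1/3$ for large~$x$, which is exactly your computation rewritten.
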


We conclude our discussion of the Titchmarsh divisor problem by mentioning the important work of Pitt~\cite{Pitt}, who proves~$\sum_{p\leq x}a(p-1)\ll x^{1-\delta}$ for the sequence~$(a(n))$ of Fourier coefficients of an integral weight holomorphic cusp form (which is a special case of~\eqref{eq:question-corre} when the~$(a(n))$ are Hecke eigenvalues). It is a striking feature that power-saving can be proved \emph{unconditionally} in this situation.

\subsection{Correlation of divisor functions}

Another instance of the problem~\eqref{eq:question-corre} is the estimation, for integers~$k, \ell\geq 2$, of the quantity
\[ \cT_{k, \ell}(x) := \sum_{n\leq x}\tau_k(n)\tau_{\ell}(n+1) .\]
The conjectured estimate is of the shape
$$ \cT_{k, \ell}(x) \sim C_{k, \ell}x(\log x)^{k+\ell-2} $$
for some constants~$C_{k, \ell}>0$. The case~$k=\ell$ is of particular interest when one looks at the~$2k$-th moment of the Riemann~$\zeta$ function~\cite[§7.21]{TM-zeta} (see also~\cite{CG}): in that context, the size of the error term is a non-trivial issue, as well as the uniformity with which one can replace~$n+1$ above by~$n+a$, $a\neq 0$. Current methods are ineffective when~$k, \ell\geq 3$, so we focus on the case~$\ell=2$. Let us denote
$$ \cT_k(x) := \sum_{n\leq x}\tau_k(n)\tau(n+1). $$
There has been several works on the estimation of~$\cT_k(x)$. There are nice expositions of the history of the problem in the papers of Heath-Brown~\cite{HB} and Fouvry-Tenenbaum~\cite{FT-Piltz}. The latest published results may be summarized as follows.
\begin{thmext}
There holds:
\begin{align*}
\cT_2(x) =&\ xP_2(\log x) + O_\ee(x^{2/3+\ee}), && (\text{\cite{DI-BinDiv}}), \\
\cT_3(x) =&\ xP_3(\log x) + O(x^{1-\delta}), && (\text{\cite{Deshouillers}}, \text{\cite{Topacogullari}}), \\
\cT_k(x) =&\ xP_k(\log x) + O_k(x\e^{-\delta\sqrt{\log x}}) \quad \text{for fixed $k\geq 4$,} && (\text{\cite{FT-Piltz}}). \numberthis\label{eq:estim-Tk-4}
\end{align*}
Here~$\ee>0$ is arbitrary, $\delta>0$ is some constant depending on~$k$, and~$P_k$ is an explicit degree~$k$ polynomial.
\end{thmext}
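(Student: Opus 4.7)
The plan is to attack all three estimates by Dirichlet's hyperbola method applied to the $\tau(n+1)$ factor, which reduces the problem to bounds for $\tau_k$ in arithmetic progressions of moduli just below $\sqrt{x}$. Writing
\[
\tau(n+1) = 2 \ssum{d\mid n+1 \\ d \leq \sqrt{n+1}} 1 \ + \ O(1),
\]
one gets
\[
\cT_k(x) = 2\sum_{d\leq \sqrt{x}} \ssum{n\leq x \\ n\equiv -1 \mod{d}} \tau_k(n) \ + \ O\bigl(\sqrt{x}(\log x)^{k-1}\bigr),
\]
so the whole game is to produce, uniformly in $d\leq \sqrt{x}$, an asymptotic for $\sum_{n\leq x, \, n\equiv -1 \mod d} \tau_k(n)$ with an error that sums to the claimed bound. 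The main term $xP_k(\log x)$ then arises by contour-shifting the local Dirichlet series for $\tau_k$ in each residue class and recollecting the $d$-sum.

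For $k=2$, I would apply the Vorono\"\i\ summation formula to the inner sum, turning the error into a weighted average of Kloosterman sums $S(-1,h;d\ell)$ against a Bessel-type transform. The Deshouillers--Iwaniec bounds~\cite{DI-BinDiv} for such averages, obtained through Kuznetsov's trace formula and the spectral theory of automorphic forms on $\mathrm{GL}_2$, give the power saving $x^{2/3+\ee}$. For $k=3$, the same scheme goes through after a substantially more intricate Vorono\"\i\ step (producing multilinear sums of Kloosterman sums), handled again by spectral input on $\mathrm{GL}_2$ together with combinatorial decompositions of $\tau_3$; this is how Deshouillers and, quantitatively, Topacogullari~\cite{Topacogullari} reach $x^{1-\delta}$.

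For $k\geq 4$, no currently known Kloosterman-sum bound is strong enough to beat the trivial error on average. Instead I would follow Fouvry--Tenenbaum~\cite{FT-Piltz}: express $\sum_{n\leq x, \, n\equiv -1 \mod d} \tau_k(n)$ via a Mellin--Barnes contour against $\zeta(s)^k$ twisted by Dirichlet characters $\mod d$, shift to the line $\Re(s)=1-c/\log x$, and control the contribution of the $L$-zeros using Huxley-type zero-density estimates combined with Siegel--Walfisz uniformity. The price for this purely $L$-function approach is that one only saves a factor $\e^{-\delta\sqrt{\log x}}$, reflecting the usual Vinogradov--Korobov-style zero-free region and the fact that the implicit constant $\delta$ must shrink as $k$ grows.

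The main obstacle in every case is the $\sqrt{x}$ barrier on the modulus~$d$: Bombieri--Vinogradov goes exactly to $d\leq \sqrt{x}$ with only $\log$-savings, so obtaining a \emph{power} saving on~$\cT_k(x)$ requires either deep spectral cancellation in Kloosterman sums (viable only for $k\leq 3$) or a robust zero-density input for the whole family of Dirichlet $L$-functions (the route that forces the $\sqrt{\log x}$ loss for $k\geq 4$). The extraction of the explicit polynomial $P_k$ is by comparison the routine part, coming from the residue of $\zeta(s)^k \zeta(s) / \zeta(2s)$-type Dirichlet series at $s=1$ after the two summations are rearranged.
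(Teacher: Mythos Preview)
This statement is a survey theorem citing external references; the paper gives no proof of its own. It does, however, explain in Section~2 precisely where the~$\e^{-\delta\sqrt{\log x}}$ in the case~$k\geq 4$ comes from, and your account of the Fouvry--Tenenbaum argument is wrong on this point.

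You attribute the loss to a ``purely $L$-function approach'' via zero-density estimates and a Vinogradov--Korobov zero-free region. The paper explicitly says otherwise: the error term ``resembles that in the distribution of primes in arithmetic progressions, where it is linked to the outstanding problem of zero-free regions of~$L$-functions. However there is no such process at work'' here. The actual Fouvry--Tenenbaum mechanism is the dispersion method combined with the Deshouillers--Iwaniec Kloosterman-sum bounds --- the same spectral input you correctly identify for~$k=2,3$. The bottleneck is not a lack of Kloosterman cancellation but a nuisance GCD: the dispersion method produces phases of the shape~$\e\big(n\frac{\bar{rd}}{sc} + \frac{\bar{cd}}{q}\big)$, and the extra term~$\bar{cd}/q$ is not covered by the Deshouillers--Iwaniec estimates. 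To reduce to~$q=1$, one sieves out prime factors below a parameter~$z$ beforehand, producing two error terms~$\e^{-\delta(\log x)/\log z}$ and~$z^{-1}$; balancing these gives~$\e^{-\delta\sqrt{\log x}}$ at best.

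This distinction is the whole point of the present paper: because the loss is combinatorial rather than tied to zeros of~$L$-functions, there is room to remove it by generalizing the Kloosterman-sum bounds to handle the extra phase directly (Theorem~\ref{thm:quintilin}), which is what yields the power saving of Theorem~\ref{thm:Tk}. Your sketches for~$k=2$ and~$k=3$ are broadly in the right spirit.
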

The error term of~\eqref{eq:estim-Tk-4} resembles that in the distribution of primes in arithmetic progressions, where it is linked to the outstanding problem of zero-free regions of~$L$-functions. However there is no such process at work in~\eqref{eq:estim-Tk-4}, leaving one to wonder if power-saving can be achieved. In~\cite{BV}, Bykovski{\u\i} and Vinogradov announce results implying
\begin{equation}
\cT_k(x) = \ xP_k(\log x) + O_k(x^{1-\delta/k}) \qquad (k\geq 4, x\geq 2) \label{eq:estim-Tk}
\end{equation}
for some absolute~$\delta>0$, and sketch ideas of a proof. The proposed argument, in a way, is dual to the method adopted in~\cite{FT-Piltz}\footnote{In~\cite{Motohashi,FT-Piltz}, the authors study the distribution of~$\tau_k(n)$ in progressions of moduli up to~$x^{1/2}$, while in~\cite{BV} the authors address the distribution of~$\tau(n)$ in progressions of moduli up to~$x^{1-1/k}$.} (which is related to earlier work of Motohashi~\cite{Motohashi}). Here we take up the method of~\cite{FT-Piltz} and prove an error term of the same shape.
\begin{theoreme}\label{thm:Tk}
For some absolute~$\delta>0$, the estimate~\eqref{eq:estim-Tk} holds.
\end{theoreme}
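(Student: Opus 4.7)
The plan is to follow the Fouvry--Tenenbaum method~\cite{FT-Piltz}, which recasts Theorem~\ref{thm:Tk} as a power-saving estimate for~$\tau_k$ in residue classes, averaged over the modulus up to~$x^{1/2}$, while substituting their analytic input (based on zero-free regions of~$\zeta$, and hence limited to a~$\exp(-\delta\sqrt{\log x})$ saving) by the quintilinear Kloosterman sum bound of this paper, which is of power-saving type.

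Applying the Dirichlet hyperbola method to~$\tau(n+1)$ yields
\[
\cT_k(x) = 2 \sum_{d \leq \sqrt{x+1}}\, \ssum{n \leq x \\ n \equiv -1 \mod d} \tau_k(n) + O\big(x^{1/2}(\log x)^{k-1}\big),
\]
so the task reduces to the asymptotic evaluation of the inner sum, summed over~$d \leq x^{1/2}$, with a power-saving error term. Decompose~$\tau_k$ as a~$k$-fold convolution and apply a smooth dyadic partition of unity to localize each factor~$a_i$ of~$n = a_1 \cdots a_k$ in a range~$a_i \sim A_i$ with~$A_1 \cdots A_k \asymp x$. Group and merge these factors (again by the hyperbola method) so as to reshape the convolution into five packets whose sizes and smoothness match the hypotheses of the quintilinear Kloosterman sum theorem: four short ``smooth'' packets, possibly carrying congruence conditions inherited from the splitting, and one long packet on which Poisson summation will be profitable.

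Detect the congruence~$a_1 \cdots a_k \equiv -1 \mod d$ using additive characters modulo~$d$ and perform Poisson summation in the long packet. The zero-frequency term reconstructs, upon summing over~$d$, the expected main term~$xP_k(\log x)$ (with an acceptable error from completing the Poisson tails); the non-zero frequencies produce incomplete Kloosterman sums of modulus~$d$, weighted by the smooth packets. The quintilinear bound then saves a factor~$x^{-\delta_0}$ on the off-diagonal, for some absolute~$\delta_0 > 0$, yielding the claimed estimate.

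The principal obstacle is the combinatorial reshaping of the~$k$-fold convolution~$\tau_k$ into the rigid five-variable structure required by the Kloosterman sum theorem while maintaining the smoothness, support, and coprimality constraints on each packet; in particular, one must verify that in every dyadic splitting at least one packet is long enough for Poisson to be useful and that the remaining packets are short enough to qualify as ``smooth''. The factor~$1/k$ in the final saving~$x^{1-\delta/k}$ is forced by the fact that a generic factor~$a_i$ has typical size~$x^{1/k}$, which limits the effective frequency range after Poisson and hence the proportion of the Kloosterman cancellation that can be extracted on averaging over~$d \sim x^{1/2}$.
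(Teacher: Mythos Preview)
Your outline misidentifies how the quintilinear bound (Theorem~\ref{thm:quintilin}) enters the argument, and in doing so skips the actual engine of the proof.

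First, Theorem~\ref{thm:quintilin} does not have ``four short smooth packets'': it has exactly \emph{two} smooth variables~$c,d$ (with congruences mod~$q$) and three rough variables~$n,r,s$. More importantly, in the paper these smooth variables are \emph{not} obtained by regrouping factors of~$\tau_k$. They are the two copies~$q_1,q_2$ of the modulus that appear after the Cauchy--Schwarz step of the \emph{dispersion method} (Section~\ref{sec:convo-bin}, culminating in Theorem~\ref{thm:distrib-convo}). Your proposal bypasses dispersion entirely; but a single Poisson summation in one long factor of~$\tau_k$ produces linear exponentials~$\e(h\bar{m}/d)$, not the Kloosterman-type phase~$\e(n\bar{rd}/(sc))$ that Theorem~\ref{thm:quintilin} handles, and no amount of regrouping of~$\tau_k$-factors will manufacture the required second smooth variable.

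Second, the paper's combinatorics (Section~7.1.3) splits according to whether some subproduct of the~$N_j$ lands in~$[x^{\delta_1},x^{1/3-\delta_1}]$. When it does (case~$\cA$), Theorem~\ref{thm:distrib-convo} applies. When it does not (cases~$\cB_1,\cB_2,\cB_3$), one has at most three large smooth variables and a tiny rough remainder; these cases are treated by a \emph{different} deep input, namely the Friedlander--Iwaniec/Heath-Brown result on~$\tau_3$ in individual progressions (via Deligne), invoked through~\cite[Lemma~2]{BFI2}. Your sketch does not account for this dichotomy.

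Finally, your explanation of the~$1/k$ loss is not the actual source. The dispersion/Kloosterman part saves an absolute power of~$x$. The~$1/k$ arises from the \emph{small conductor} contribution: characters~$\chi$ with~$\cond(\chi)\leq R$ are handled by Lemma~\ref{lemme:Tk-SW}, whose unconditional input is the convexity bound for~$L(s,\chi)^k$, forcing~$R\leq x^{\delta/k}$; this caps the gain from the large-conductor piece at~$x^{-\delta/k}$ as well. Under Lindel\"of this constraint disappears, which is exactly Theorem~\ref{thm:Tk-cond}.
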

In view of~\cite{BV}, Theorem~\ref{thm:Tk} is not new. However the method is somewhat different. In the course of our arguments, the analytic obstacle to obtaining an error term~$O_k(x^{1-\delta})$ ($\delta$ independent of~$k$) in the estimate~\eqref{eq:estim-Tk} will appear clearly: it lies in the estimation of sums of the shape~$\sum_{n\leq x}\tau_k(n)\chi(n)$ for Dirichlet characters~$\chi$ of small conductors. This issue is know to be closely related to the growth of Dirichlet~$L$-functions inside the critical strip~\cite{FrIw2}.
\begin{theoreme}\label{thm:Tk-cond}
Assume that Dirichlet $L$-functions satisfy the Lindelöf hypothesis, meaning~$L(\tfrac12+it, \chi) \ll_\ee (q(|t|+1))^\ee$ for~$t\in\bfR$ and~$\chi\mod{q}$. Then for some \emph{absolute}~$\delta>0$,
\begin{equation}
\cT_k(x) = xP_k(\log x) + O_k(x^{1-\delta}) \qquad (k\geq 4,\ x\geq 2)\label{eq:estim-Tk-cond}
\end{equation}
\end{theoreme}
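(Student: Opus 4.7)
The plan is to adapt the Fouvry--Tenenbaum approach of~\cite{FT-Piltz} used in our proof of Theorem~\ref{thm:Tk}, and to replace its convexity-based input on $\tau_k$-character sums by the stronger bound available under Lindel\"of. The Dirichlet hyperbola method applied to $\tau(n+1)$ reduces $\cT_k(x)$ to the distribution of $\tau_k$ in arithmetic progressions of modulus $d \leq \sqrt{x}$. Expanding in Dirichlet characters $\chi\mod d$ isolates the character sums
$$
S(x; \chi) := \sum_{\substack{n \leq x \\ (n, d) = 1}} \tau_k(n) \chi(n),
$$
the principal character contributing (after summation over $d$) the expected main term $xP_k(\log x)$, so the problem reduces to saving a power of $x$ over the contribution of the non-principal characters.

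As anticipated in the paragraph following Theorem~\ref{thm:Tk}, the proof of Theorem~\ref{thm:Tk} already yields an absolutely power-saving contribution from the characters of \emph{large} conductor $q$ (say $q \geq x^\eta$ for a small absolute $\eta > 0$): one expands $\tau_k$ through a Heath-Brown type identity, applies Motohashi-style dispersion, and bounds the resulting shifted sums by Deshouillers--Iwaniec estimates for shifted convolutions; the saving is independent of $k$. All the $k$-dependence in the exponent of~\eqref{eq:estim-Tk} comes from the characters of \emph{small} conductor $q \leq x^\eta$, where no useful averaging over $\chi$ is available and $S(x; \chi)$ must be bounded one character at a time. Unconditionally, Perron's formula for $L(s, \chi)^k$ and a contour shift to the convexity line $\Re s = 1 - 1/k$ only yields $S(x; \chi) \ll x^{1 - 1/k + \ee}$, which is the precise source of the $\delta/k$-saving.

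Under Lindel\"of one can shift the contour all the way to $\Re s = \tfrac12$, where $|L(\tfrac12 + it, \chi^*)|^k \ll_\ee (q(1+|t|))^\ee$ (using Lindel\"of with parameter $\ee/k$). Balancing the tail against the Perron truncation error produces, for every primitive non-principal $\chi^*\mod q$ with $q \leq x$,
$$
S(x; \chi^*) \ll_{k, \ee} x^{1/2 + \ee} q^{\ee}.
$$
The exponent $\tfrac12$ is now independent of $k$. Reducing each $\chi\mod d$ to its primitive inducer $\chi^*$, unfolding the coprimality condition $(n, d) = 1$ by M\"obius inversion, and reordering the outer sum by the conductor $q$, the total contribution of the small-conductor range is bounded by
$$
\sum_{q \leq x^\eta} x^{1/2 + \ee} q^{\ee} \ssum{d \leq \sqrt{x} \\ q \mid d} \frac{\tau_k(d)}{\varphi(d)} \ll_{k, \ee} x^{1/2 + O(\ee)},
$$
which is $O_k(x^{1-\delta})$ for any absolute $\delta < \tfrac12$. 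Combined with the large-conductor range this gives~\eqref{eq:estim-Tk-cond}.

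The main obstacle I anticipate is aligning the conductor threshold $\eta$ with the saving extracted from the large-conductor analysis of Theorem~\ref{thm:Tk}, and verifying that no residual $k$-dependence creeps back in through the M\"obius inversion enforcing $(n, d) = 1$ (where one must handle $\sum_{m \leq x/e} \tau_k(em) \chi^*(m)$ by the same contour argument, absorbing a factor $\tau_k(e)$ into the implicit constant) or through the reparametrisation by the conductor $q$. Both are bookkeeping steps once the key individual bound $S(x; \chi^*) \ll_{k,\ee} x^{1/2+\ee} q^\ee$ is in place; in particular, no shifted-convolution input beyond that used in Theorem~\ref{thm:Tk} is needed.
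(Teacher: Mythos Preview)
Your approach is essentially the paper's: split the character expansion by conductor size, handle large conductors via the dispersion/Kloosterman machinery with $k$-independent saving, and for small conductors replace the convexity input in Lemma~\ref{lemme:Tk-SW} by the Lindel\"of bound (the paper shifts only to $\Re s=1-\delta$ rather than all the way to $\tfrac12$, but this is immaterial). Two remarks.

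First, a minor misdescription: for $\tau_k$ the large-conductor argument does not pass through a Heath-Brown identity. One simply writes $\tau_k(n)=\sum_{n_1\cdots n_k=n}1$, localises each $n_j$ dyadically, and splits into cases according to whether some subproduct of the $N_j$ lands in $[x^{\delta_1},x^{1/3-\delta_1}]$ (then Theorem~\ref{thm:distrib-convo} applies) or not (then at least one, two or three of the $N_j$ exceed $x^{1/3-\delta_1}$ and one uses~\cite[Lemma~2]{BFI2} on the resulting smooth sums). The Heath-Brown identity is only used for $\Lambda$ in the Titchmarsh problem.

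Second, and more substantively, you sweep under the rug the evaluation of the principal-character contribution
\[
\sum_{d\leq\sqrt x}\frac{1}{\vphi(d)}\ssum{n\leq x\\(n,d)=1}\tau_k(n)=xP_k(\log x)+(\text{error}).
\]
Unconditionally this error is itself only $O_k(x^{1-c/k})$: Perron's formula here involves $\zeta(s)^k$ (times local Euler factors), and the contour shift is limited by the growth of $\zeta$ in the critical strip, exactly as for the non-principal characters. The paper explicitly re-runs the computations of~\cite[Lemmas~6--8]{FT-Piltz} under Lindel\"of for $\zeta$ (the case $q=1$ of the hypothesis) to obtain an absolute power saving in this step. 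The mechanism is identical to the one you invoke for~$S(x;\chi^*)$, so there is no new idea needed, but it is a \emph{second} place where the $k$-dependence has to be removed and it should not be absorbed into the phrase ``the expected main term''.
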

The standard conjecture for the error term in the previous formula is~$O_{k,\ee}(x^{1/2+\ee})$. We have not sought optimal values for~$\delta$ in Theorems~\ref{thm:Tk} and~\ref{thm:Tk-cond}. In the case of~\eqref{eq:estim-Tk}, the method of~\cite{BV} seems to yield much better numerical results.

Our method readily allows to replace the shift~$n+1$ in Theorem~\ref{thm:Tk} by~$n+a$, $0<|a|\leq x^\delta$ with~$\delta$ \textit{independent of~$k$}. We give some explanations in Section~\ref{sec:remark-uniformity-a} below regarding this point.

\begin{acknowledgements}
This work was done while the author was a CRM-ISM Postdoctoral Fellow at Université de Montréal. The author is indebted to R. de la Bretèche, É. Fouvry, V. Blomer, D. {\Milicevic}, S. Bettin, G. Tenenbaum, B. Topacogullari and A. Granville for valuable discussions and comments, and to an anonymous referee for helpful remarks and careful reading of the manuscript. The author is particularly grateful to V. Blomer for making a preprint of~\cite{BM} available, and for making him aware of the reference~\cite{BV}; and finally to B. Topacogullari for correcting a significant oversight in an earlier version.
\end{acknowledgements}


\section{Overview}

The method at work in Theorems~\ref{thm:TM-cond}, \ref{thm:TM-uncond} and~\ref{thm:Tk} is the dispersion method, which was pioneered by Linnik~\cite{Linnik} and studied intensively in groundbreaking work of Bombieri, Fouvry, Friedlander and Iwaniec~\cite{Fouvry82,FI,BFI} on primes in arithmetic progressions. It has received a large publicity recently with the breakthrough of Zhang~\cite{Zhang} (see also~\cite{Polymath}), giving the first proof of the existence of infinitely many bounded gaps between primes (which was shown later by Maynard~\cite{Maynard} and Tao (unpublished) not to require such strong results).

In our case, by writing~$\tau(n)$ as a convolution of the constant function~$1$ with itself, the problem is reduced to estimating the mean value of~$\Lambda(n)$ or~$\tau_k(n)$ when~$n\leq x$ runs over arithmetic progressions~$\mod{q}$, with an average over~$q$. It is crucial that the uniformity be good enough to average over~$q\leq\sqrt{x}$. In the case of~$\Lambda(n)$, that is beyond what can currently be done for individual moduli~$q$, even assuming the GRH. The celebrated theorem of Bombieri-Vinogradov~\cite[Theorem~17.1]{IK} allows to exploit the averaging over~$q$, but if one wants error terms at least as good as~$O(x/(\log x)^2)$ for instance, it barely fails to be useful.

Linnik's dispersion method~\cite{Linnik}, which corresponds at a technical level to an acute use of the Cauchy--Schwarz inequality, offers the possibility for such results, on the condition that one has good bounds on some types of exponential sums related to Kloosterman sums. One then appeals to Weil's bound~\cite{Weil}, or to the more specific but stronger bounds of Deshouillers-Iwaniec~\cite{DI} which originate from the theory of modular forms through Kuznetsov's formula.

The Deshouillers-Iwaniec bounds apply to exponential sums of the following kind:
$$ \ssum{c, d, n, r, s \\ (rd, sc)=1 } b_{n, r, s} g(c, d) \e\Big(n\frac{\bar{rd}}{sc}\Big) $$
where~$c, d, n, r, s$ are integers in specific intervals, $(b_{n, r, s})$ is a generic sequence, and~$g(c, d)$ depends in a smooth way on~$c$ and~$d$. Here and in what follows, $\e(x)$ stand for~$\e^{2\pi i x}$, and~$\bar{rd}$ denotes the multiplicative inverse of~$rd\mod{sc}$ (since~$\e(x)$ is of period~$1$, the above is well-defined). It is crucial that the variables~$c$ and~$d$ are attached to a smooth weight~$g(c, d)$: for the variable~$d$, in order to reduce to complete Kloosterman sums~$\mod{sc}$; and for the variable~$c$, because the object that arises naturally in the context of modular forms is the average of Kloosterman sums over moduli (with smooth weight).

In the dispersion method, dealing with largest common divisors (appearing through the Cauchy--Schwarz inequality) causes some issues. The most important of these is that the phase function that arises in the course of the argument takes a form similar to
\begin{equation}
\e\Big( n\frac{\bar{rd}}{sc}  + \frac{\bar{cd}}{q}\Big)\label{eq:phases-2}
\end{equation}
rather than the above. Here~$q$ can be considered small and fixed, but even then, the second term oscillates chaotically.

Previous works avoided the issue altogether by using a sieve beforehand in order to reduce to the favourable case~$q=1$ (see Lemma 4 and Section~3 of~\cite{FouvryTit}, and Lemma 4 and Theorem~5* of~\cite{BFI}). Two error terms are then produced, which take the form
$$ \e^{-\delta(\log x)/\log z} + z^{-1} $$
where~$z\leq x$ is a parameter. Roughly speaking, the first term corresponds to sieving out prime factors smaller than~$z$, with the consequence that the ``bad'' variable~$q$ above is either~$1$ or larger than~$z$. The second term corresponds to a trivial bound on the contribution of~$q>z$. The best error term one can achieve in this way is~$\e^{-\delta\sqrt{\log x}}$, whence the estimate~\eqref{eq:estim-Tk-4}.

By contrast, in the present paper, we transpose the work of Deshouillers-Iwaniec in a slightly more general context, which allows to encode phases of the kind~\eqref{eq:phases-2}. More specifically, whereas Deshouillers and Iwaniec worked with modular forms with trivial multiplier system, we find that working with multiplier systems defined by Dirichlet characters allows one to encode congruence conditions~$\mod{q}$ on the ``smooth'' variables~$c$ and~$d$. This is partly inspired by recent work of Blomer and {\Milicevic}~\cite{BM}. The main result, which extends~\cite[Theorem~12]{DI} and has potential for applications beyond the scope of the present paper, is the following.
\begin{theoreme}\label{thm:quintilin}
Let~$C, D, N, R, S \geq 1$, and~$q, c_0, d_0\in\bfN$ be given with~$(c_0d_0, q)=1$. Let~$(b_{n,r,s})$ be a sequence supported inside ~$(0, N]\times(R, 2R]\times(S, 2S]\cap\bfN^3$.
Let~$g :\bfR_+^5\to\bfC$ be a smooth function compactly supported in~$]C, 2C]\times]D, 2D]\times (\bfR_+^*)^3$, satisfying the bound
\begin{equation}
\frac{\partial^{\nu_1+\nu_2+\nu_3+\nu_4+\nu_5}g} {\partial c^{\nu_1}\partial d^{\nu_2}\partial n^{\nu_3}\partial r^{\nu_4} \partial s^{\nu_5}} (c,d, n, r, s)
\ll_{\nu_1, \nu_2, \nu_3, \nu_4, \nu_5} \{c^{-\nu_1}d^{-\nu_2}n^{-\nu_3}r^{-\nu_4}s^{-\nu_5}\}^{1-\ee_0}\label{eq:cond-derg-quintilin}
\end{equation}
for some small~$\ee_0>0$ and all fixed~$\nu_j\geq 0$. Then
\begin{equation}
\begin{aligned}
\underset{\substack{c \equiv c_0\text{ and } d\equiv d_0 \mod{q} \\ (qrd, sc)=1}}{\sum_c\sum_d\sum_n\sum_r\sum_s} b_{n, r, s}& g(c, d, n, r, s) \e\Big(n\frac{\bar{rd}}{sc}\Big) \\ &\ll_{\ee,\ee_0} (qCDNRS)^{\ee+O(\ee_0)}q^{3/2} K(C,D,N,R,S) \|b_{N,R,S}\|_2,\label{eq:majo-quintilin}
\end{aligned}
\end{equation}
where~$\|b_{N, R, S}\|_2 = \big(\sum_{n, r, s}|b_{n,r,s}|^2\big)^{1/2}$ and
\[ K(C,D,N,R,S)^2 = qCS(RS+N)(C+RD)+C^2DS\sqrt{(RS+N)R}+D^2NRS^{-1}. \]
\end{theoreme}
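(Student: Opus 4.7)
The plan is to adapt the strategy of Deshouillers--Iwaniec's proof of~\cite[Theorem~12]{DI}, incorporating the congruence conditions on~$c$ and~$d$ through Dirichlet characters, in the spirit of the multiplier-system formalism alluded to in the introduction. First I would detect both congruences by orthogonality of characters modulo~$q$, writing
$$ \bfUn_{c\equiv c_0\ \pmod q}\bfUn_{d\equiv d_0\ \pmod q} = \frac{1}{\varphi(q)^2}\sum_{\chi_1, \chi_2 \pmod q} \bar\chi_1(c_0)\bar\chi_2(d_0)\,\chi_1(c)\chi_2(d). $$
This reduces matters to bounding the same quintilinear sum, with the congruence conditions removed and an extra character twist $\chi_1(c)\chi_2(d)$ inserted in the summand, uniformly over pairs $(\chi_1, \chi_2)$ of Dirichlet characters modulo~$q$. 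The coprimality constraint $(qrd, sc)=1$ would be opened in parallel by M\"obius inversion.

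For each such pair, I would then follow the blueprint of~\cite{DI}: apply Poisson summation in~$d$ modulo~$sc$ to convert $\chi_2(d)\,\e(n\overline{rd}/(sc))$ into a dual sum weighted by the Fourier transform~$\hat g$ and by a short Gauss-type sum involving~$\chi_2$; then apply Cauchy--Schwarz in~$(n,r,s)$ to separate the generic sequence~$b_{n,r,s}$ from the smooth variables. Expanding the square produces a double sum over moduli $c, c'$ in the residue class $c_0 \pmod q$, together with auxiliary congruence parameters from the expansion. After the standard arithmetic manipulation of the phases, the inner sum takes the form of a smoothly weighted sum of Kloosterman sums twisted by the nebentypus~$\chi_1$.

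Such sums are then amenable to spectral analysis via the Kuznetsov trace formula on~$\Gamma_0(N)$, at a level~$N$ divisible by~$q$ and by the auxiliary moduli~$ss'$ from the Cauchy--Schwarz step, equipped with the multiplier system attached to~$\chi_1$. The contributions from Maass cusp forms, holomorphic cusp forms and Eisenstein series of nebentypus~$\chi_1$ are each bounded by invoking a spectral large sieve inequality for Fourier coefficients in this twisted setting, as developed by Deshouillers--Iwaniec and refined in the recent work of Blomer--{\Milicevic}~\cite{BM}. Summing over the character pairs and combining the bounds from the diagonal and off-diagonal terms should produce the bound $q^{3/2}K(C,D,N,R,S)\|b\|_2$ claimed in~\eqref{eq:majo-quintilin}.

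The principal obstacle is the bookkeeping of the dependence on~$q$ throughout the spectral machinery. Several powers of~$q$ arise at different stages---from the Gauss sum in the Poisson step, from the level of the ambient modular surface, and from the normalisation of Fourier coefficients of newforms and oldforms with nontrivial nebentypus---and these must conspire to produce only a factor~$q^{3/2}$. This in turn demands a version of the spectral large sieve inequality that is uniform in the level and in the conductor of the nebentypus, with explicit control over the contribution of exceptional Laplace eigenvalues. A secondary technical point is that the integral kernels appearing in the Kuznetsov formula must be arranged, via a smooth partition of unity in the extra variables $(n,r,s)$, to match the various regimes $RS\lessgtr N$ and $C\lessgtr RD$ reflected in the shape of $K(C,D,N,R,S)$.
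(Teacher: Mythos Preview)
Your overall framework---Poisson in~$d$, spectral theory on congruence groups with nebentypus, large sieve on the spectral side---is correct and matches the paper's strategy. However, two steps in your sketch diverge from the actual proof, and one of them would not work as stated.

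First, the paper does not detect the $d$-congruence by a character~$\chi_2$. Instead, after restricting to a fixed class $s\equiv s_0\pmod q$ (at cost~$q^{1/2}$ in the final bound), it applies Poisson summation in~$d$ directly modulo~$scq$, absorbing the condition $d\equiv d_0\pmod q$ into the modulus. By the Chinese remainder theorem the complete sum factors, yielding an ordinary Kloosterman sum $S(n\bar r,\,-m\bar q;\,sc)$ together with an additive twist $\e(-md_0\overline{s_0c_0}/q)$ in the dual variable~$m$. Only the $c$-congruence is then detected by a multiplicative character~$\chi\pmod q$. This is cleaner than introducing a pair~$(\chi_1,\chi_2)$ and avoids the Gauss-sum bookkeeping you anticipate.

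Second, and more seriously, your step ``apply Cauchy--Schwarz in $(n,r,s)$ \dots\ expanding the square produces a double sum over moduli $c, c'$'' is not how the argument runs, and Cauchy--Schwarz on the arithmetic side would destroy the structure: after opening the square you would face products $S(n\bar r, m; sc)\,\overline{S(n\bar r, m'; sc')}$ with two distinct moduli, to which Kuznetsov does not apply. The correct route, following Deshouillers--Iwaniec, is to first recognise $\bar\chi(c)\,S(n\bar r, m\bar q; sc)$ as a generalised Kloosterman sum $S_{\infty,\,1/s}(m, n;\,sc\sqrt{rq})$ for the group~$\Gamma_0(qrs)$ with multiplier~$\chi$ (this is Lemma~\ref{lemme:kloo-chi}, the key technical input you do not mention), apply the Kuznetsov formula to the sum over the single modulus variable~$c$, and only then apply Cauchy--Schwarz on the \emph{spectral} side to separate~$m$ from~$(n,r,s)$. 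The large sieve inequalities of Proposition~\ref{prop:grand-crible} and the weighted exceptional-spectrum bounds of Lemmas~\ref{lemme:DIth5}--\ref{lemme:DIth7}, established here with explicit dependence on the nebentypus conductor, then control each factor and produce the claimed~$q^{3/2}$.
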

We have made no attempt to optimize the dependence in~$q$. In all of the applications considered here, we only apply the estimate~\eqref{eq:majo-quintilin} for small values of~$q$, say~$q=O((CDNRS)^{\ee_1})$ for some small~$\ee_1>0$. Such being the case, the reader might still wonder why the bound tends to grow with~$q$. The main reason is that upon completing the sum over~$d$, we obtain a Kloosterman sum to modulus~$scq$, which grows with~$q$.

In the footsteps of previous work~\cite{Drappeau}, for the proof of our equidistribution results, we separate from the outset of the argument the contribution of characters of small conductors (which is typically well-handled by complex-analytic methods). We only apply the dispersion method to the contribution of characters of large conductors. There is considerable simplification coming from the fact that no ``Siegel-Walfisz''-type hypothesis is involved in the latter, which allows us to focus on the combinatorial aspect of the method\footnote{It is more straightforward to study the mean value of~$\tau_k(n)$ in arithmetic progressions of small moduli, than a~$k$-fold convolution of slowly oscillating sequences, each supported on a dyadic interval.}.

\medskip

In Section~\ref{sec:lemmas}, we state a few useful lemmas. In Section~\ref{sec:sums-kloost-sums}, we adapt the arguments of~\cite{DI} to prove Theorem~\ref{thm:quintilin}. In Section~\ref{sec:convo-bin}, we employ a variant of the dispersion method to obtain equidistribution for binary convolutions in arithmetic progressions. In Sections~\ref{sec:appl-titchm-divis} and~\ref{sec:appl-corr-divis}, we derive Theorems~\ref{thm:TM-cond},~\ref{thm:TM-uncond},~\ref{thm:Tk} and~\ref{thm:Tk-cond}.

\subsubsection*{Notations}

We use the convention that the letter~$\ee$ denotes a positive number that can be chosen arbitrarily small and whose value may change at each occurence. The letter~$\delta>0$ will denote a positive number whose value may change from line to line, and whose dependence on various parameters will be made clear by the context.

We define the Fourier transform~$\hat f$ of a function~$f$ as
$$ {\hat f}(\xi) = \int_\bfR f(t) \e(-\xi t)\dd t .$$
If~$f$ is smooth and compactly supported, the above is well-defined and there holds
$$ f(t) = \int_\bfR {\hat f}(\xi)\e(\xi t)\dd\xi. $$
If moreover~$f$ is supported inside~$[-M, M]$ for some~$M\geq 1$ and~$\|f^{(j)}\|_{\infty} \ll M^{-j}$ for~$j\in\{0, 2\}$, then we have
$$ {\hat f}(\xi) \ll \frac{M}{1 + (M\xi)^2}. $$


\section{Lemmas}\label{sec:lemmas}

In this section we group a few useful lemmas. The first is the Poisson summation formula, which is very effective at estimating the mean value of a smooth function along arithmetic progressions.
\begin{lemme}[{\cite[Lemma~2]{BFI}}]\label{lemme:poisson}
Let~$M\geq 1$ and~$f:\bfR\to\bfC$ be a smooth function supported on an interval~$[-M, M]$ satisfying~$\|f^{(j)}\|_\infty \ll_j M^{-j}$ for all~$j\geq 0$. For all~$q\geq 1$ and~$(a, q)=1$, with~$H := q^{1+\ee}/M$, we have
$$ \sum_{m\equiv a\mod{q}} f(m) = \frac1q\sum_{|h|\leq H}{\hat f}\Big(\frac hq\Big)\e\Big(\frac{ah}q\Big) + O_\ee\Big(\frac1q\Big). $$
\end{lemme}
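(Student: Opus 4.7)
The plan is the standard two-step Poisson argument: apply Poisson summation after parametrising the progression, then truncate the resulting dual sum using the rapid decay of~$\hat f$.

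First I would write every integer~$m \equiv a \mod{q}$ in the form~$m = qn + a$ with~$n\in\bfZ$, so that
\[ \sum_{m\equiv a\mod{q}} f(m) = \sum_{n\in\bfZ} f(qn+a) = \sum_{n\in\bfZ} g(n), \]
where~$g(x) := f(qx+a)$. A direct change of variables gives~$\hat g(\xi) = q^{-1}\e(a\xi/q)\hat f(\xi/q)$. The ordinary Poisson summation formula then yields
\[ \sum_{m\equiv a\mod{q}} f(m) = \sum_{h\in\bfZ}\hat g(h) = \frac1q\sum_{h\in\bfZ}\hat f\Big(\frac hq\Big)\e\Big(\frac{ah}q\Big). \]

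The second step is to control the tail~$|h|>H$. Integrating by parts~$j$ times against~$\e(-\xi t)$ and using~$\|f^{(j)}\|_\infty\ll_j M^{-j}$ together with~$\mathrm{supp}(f)\subset[-M,M]$, one has
\[ \bigl|\hat f(\xi)\bigr| \ll_j \frac{M}{(M|\xi|)^j} \qquad (\xi\neq 0,\ j\geq 0). \]
Applying this with~$\xi = h/q$ for~$|h|>H=q^{1+\ee}/M$, and choosing~$j$ sufficiently large depending on~$\ee$, the contribution of the tail is bounded by
\[ \frac1q\sum_{|h|>H}\Big|\hat f\Big(\frac hq\Big)\Big| \ll_j \frac{M}{q}\sum_{|h|>H}\Big(\frac{q}{M|h|}\Big)^j \ll_j \frac{M}{q}\Big(\frac{q}{MH}\Big)^{j-1}\frac{1}{H} = \frac1q \cdot q^{-(j-1)\ee}, \]
which is~$O_\ee(1/q)$ as soon as~$(j-1)\ee\geq 0$ suffices; taking~$j=j(\ee)$ large enough gives the claimed error.

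There is no real obstacle here; the only point requiring a little care is that the decay of~$\hat f$ is only polynomial of finite order for each individual~$j$, so one must choose~$j$ large enough (depending on~$\ee$) to absorb the factor~$q^{1+\ee}$ present in the definition of~$H$. Combining the two steps yields the stated formula.
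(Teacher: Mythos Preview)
Your argument is correct and is precisely the standard Poisson-summation-plus-truncation proof; the paper does not give its own argument here but simply cites~\cite[Lemma~2]{BFI}, where the same computation appears. There is a minor arithmetic slip in your tail estimate---the correct bound for the tail is~$q^{-(j-1)\ee}$ rather than~$\tfrac1q\cdot q^{-(j-1)\ee}$---but this is harmless, since taking~$j\geq 1+1/\ee$ still gives~$O_\ee(1/q)$.
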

The next lemma is quoted from work of Shiu~\cite[Theorem~2]{Shiu}, and gives an upper bound of the right order of magnitude for sums of~$\tau_k(n)$ in short intervals and arithmetic progressions of large moduli. It is an analogue of the celebrated Brun-Titchmarsh inequality~\cite[Theorem~6.6]{IK}.
\begin{lemme}[{\cite[Theorem~2]{Shiu}}]\label{lemme:shiu}
For~$k\geq 2$,~$x\geq 2$, $x^{1/2}\leq y\leq x$, $(q, a)\in\bfN$ with~$(a, q)=1$ and~$q\leq y^{3/4}$,
$$ \ssum{x-y < n \leq x \\ n\equiv a\mod{q}} \tau_k(n) \ll_k \frac{y}{q}\big(\frac{\vphi(q)}{q}\log x\big)^{k-1}. $$
\end{lemme}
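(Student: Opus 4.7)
This is a multiplicative-function version of the Brun--Titchmarsh inequality in the context of short intervals and arithmetic progressions. Since $\tau_k$ is a non-negative multiplicative function with $\tau_k(p)=k$ and $\tau_k(p^r)\ll_k r^{k-1}$, it falls within Shiu's general framework. My plan is to proceed by induction on $k$, using the convolution identity $\tau_k = 1\ast\tau_{k-1}$, while handling the base case $k=2$ via a Selberg upper-bound sieve.

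For $k=2$, the naive hyperbola decomposition $\tau(n) = 2\sum_{d\mid n,\ d\leq\sqrt n}1 + O(1)$ is too wasteful: summing the $O(1)$ over $d\leq\sqrt x$ contributes $O(\sqrt x)$, which is not absorbed by the main term when $q$ is close to $y^{3/4}$. Instead I would dominate $\tau(n)\leq\bigl(\sum_{d\mid n}\lambda_d\bigr)^2$ by optimal Selberg sieve weights $\lambda_d$ supported on $d\leq y^{1/2}$ and coprime to $q$; the resulting bilinear sum reduces via CRT to counting $n\in(x-y,x]$ lying in a single residue class modulo $q\,\mathrm{lcm}(d_1,d_2)$, and the extremal Selberg computation yields the factor $(\varphi(q)/q)\log x$. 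For the inductive step $k\geq 3$, the pigeonhole principle applied to each factorization $n=d_1\cdots d_k$ (some factor is $\leq n^{1/k}$) gives
\[
\sum_{\substack{x-y<n\leq x \\ n\equiv a\mod q}}\tau_k(n) \;\leq\; k\sum_{\substack{d\leq x^{1/k}\\ (d,q)=1}} \sum_{\substack{(x-y)/d<m\leq x/d \\ m\equiv a\bar d\mod q}}\tau_{k-1}(m),
\]
where $\bar d$ denotes the inverse of $d$ modulo $q$. I would apply the inductive hypothesis to the inner sum whenever its conditions hold (notably $y/d\geq(x/d)^{1/2}$, i.e.\ $d\leq y^2/x$), then sum over $d$ against the weight $1/d$ using Mertens-type estimates, which produces the extra logarithmic factor and propagates the $\varphi(q)/q$.

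The principal obstacle is the tail range $d>y^2/x$ (and analogously $q>(y/d)^{3/4}$), where the inductive hypothesis no longer applies because the inner interval is too short relative to $x/d$. For this range I would abandon the induction and apply a direct Selberg-type sieve argument to $\tau_k$ in arithmetic progressions, following Shiu's original approach: one exploits the non-negativity and multiplicativity of $\tau_k$ together with the extremal theory of sieve weights to upper-bound the sum uniformly in the relevant parameters. The delicate point is matching the expected main term $(y/q)(\varphi(q)\log x/q)^{k-1}$ exactly, which requires the sieve computation to track the local behavior at primes dividing $q$ correctly; this local-global matching is where Shiu's Rankin-type trick (replacing $\tau_k$ by a Dirichlet-series majorant and evaluating at a shifted argument) does the key work.
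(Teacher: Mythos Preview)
The paper does not prove this lemma; it is quoted directly from Shiu~\cite[Theorem~2]{Shiu} (with a pointer also to Barban--Vehov and Henriot). So there is no internal argument to compare against, and the question is whether your sketch stands on its own. It does not, for two reasons.

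First, your base case $k=2$ is broken. The inequality $\tau(n)\leq\bigl(\sum_{d\mid n}\lambda_d\bigr)^2$ for Selberg weights $\lambda_d$ supported on $d\leq y^{1/2}$ is simply false: if $n$ is a prime larger than $y^{1/2}$, the only divisor in the support is $d=1$, so the right side equals $\lambda_1^2=1<2=\tau(n)$. The Selberg construction gives $(\sum_{d\mid n}\lambda_d)^2\geq\bfUn_{(n,P(z))=1}$, not a pointwise majorant of~$\tau$. One cannot dominate a function as large as $\tau(n)$ by a square of sieve weights with bounded support.

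Second, your inductive step is vacuous. When $y=x^{1/2}$ (the boundary of the hypothesis), the ``good'' range $d\leq y^2/x$ collapses to $d=1$, and essentially all of $1<d\leq x^{1/k}$ lands in the tail where you explicitly abandon the induction and propose to ``apply a direct Selberg-type sieve argument to $\tau_k$ \dots\ following Shiu's original approach''. But that argument, applied to $\tau_k$ in a short interval and a progression, \emph{is} the theorem you are trying to prove. The induction from $k-1$ to $k$ never engages with the hard regime; it simply re-poses the original problem.

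Shiu's actual method does not induct on $k$. One writes $n=ab$ with $a$ the $z$-smooth part and $b$ the $z$-rough part, handles large $a$ by a Rankin-type moment bound (this is where the local factor $\varphi(q)/q$ and the Dirichlet-series majorant enter), and for small $a$ fixes $a$ and applies an upper-bound sieve to the remaining sum over $b\equiv a\bar a\pmod q$, using that $\tau_k(b)\leq k^{\Omega(b)}$ is controlled when all prime factors of $b$ exceed $z$. The parameter $z$ is then optimised. This is a single sieve argument, uniform in $k$, not a reduction from $k$ to $k-1$.
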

Note that such a result could also be deduced from earlier work of Barban and Vekhov~\cite{BarbanVekhov}; see also~\cite{Henriot} for the most recent results on this topic.

The next lemma is the classical form of the multiplicative large sieve inequality~\cite[Theorem~7.13]{IK}.
\begin{lemme}\label{lemme:GC}
Let~$(a_n)$ be a sequence of numbers, and~$N, M, Q\geq 1$. Then
$$ \sum_{q\leq Q}\frac{q}{\vphi(q)}\ssum{\chi\mod{q}\\\chi\prim}\Big|\sum_{M<n\leq M+N} a_n\chi(n)\Big| \leq (Q^2+N-1)\sum_{N<n\leq N+M}|a_n|^2. $$
\end{lemme}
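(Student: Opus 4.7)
The statement is the classical multiplicative large sieve inequality for primitive Dirichlet characters (attributed to Gallagher, Montgomery and Montgomery--Vaughan, quoted here from IK Theorem~7.13). In view of the quadratic right-hand side $(Q^2+N-1)\sum|a_n|^2$, I read the left-hand side as the character sum squared, which is the form actually appearing in IK and the only form consistent with the stated constant. The plan is the standard two-step proof: first establish an additive large sieve inequality at Farey fractions, then convert to primitive characters by means of Gauss sums.

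For the first step, I would establish
$$ \sum_{q\le Q}\ssum{a\mod{q}\\(a,q)=1}\Big|\sum_{M<n\le M+N} a_n \e(an/q)\Big|^2 \le (Q^2+N-1)\sum_{M<n\le M+N}|a_n|^2. $$
The Farey points $\{a/q:q\le Q,\ (a,q)=1\}\subset \bfR/\bfZ$ are $\delta$-well spaced with $\delta\ge 1/Q^2$, and the Montgomery--Vaughan Hilbert-type inequality (equivalently, Selberg's extremal majorant applied to the indicator of $[M,M+N]$) yields the bound with constant $N-1+\delta^{-1}\le N-1+Q^2$.

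For the second step, for each primitive $\chi\mod{q}$ I would invoke the Gauss sum identity
$$ \chi(n) = \frac{1}{\tau(\bar\chi)}\ssum{a\mod{q}\\(a,q)=1}\bar\chi(a)\e(an/q), \qquad |\tau(\chi)|^2 = q. $$
Setting $T(a,q):=\sum_n a_n\e(an/q)$, this rewrites $|\sum_n a_n\chi(n)|^2 = q^{-1}|\sum_{(a,q)=1}\bar\chi(a)T(a,q)|^2$. Plancherel on $(\bfZ/q\bfZ)^\times$ (restricted to primitive characters) gives
$$ \ssum{\chi\mod{q}\\\chi\prim}\Big|\ssum{a\mod{q}\\(a,q)=1}\bar\chi(a)T(a,q)\Big|^2 \le \vphi(q)\ssum{a\mod{q}\\(a,q)=1}|T(a,q)|^2, $$
hence
$$ \frac{q}{\vphi(q)}\ssum{\chi\mod{q}\\\chi\prim}\Big|\sum_n a_n\chi(n)\Big|^2 \le \ssum{a\mod{q}\\(a,q)=1}|T(a,q)|^2. $$
Summing this over $q\le Q$ and injecting the additive inequality from the first step yields the lemma.

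The only substantive input is the sharp additive large sieve constant $Q^2+N-1$, which rests on the Selberg/Montgomery--Vaughan argument; once that is in hand, the passage from additive to multiplicative is a mechanical unwinding of Gauss sums and character orthogonality, presenting no genuine obstacle. Since Lemma~\ref{lemme:GC} is quoted verbatim from IK, in practice the whole point is simply to record the inequality and invoke their Theorem~7.13 downstream.
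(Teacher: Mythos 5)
Your proposal is correct, and it matches the paper's treatment: the lemma is stated as a quotation of~\cite[Theorem~7.13]{IK}, with no proof supplied in the text, and you rightly observe that the displayed inequality as written is missing the square on the inner sum (and has $M$ and $N$ interchanged in the final index set), both evidently typographical. Your two-step sketch --- the Farey-spacing additive large sieve via Montgomery--Vaughan/Selberg followed by the Gauss-sum and Plancherel conversion to primitive characters --- is the standard proof of the quoted theorem, so while the paper itself offers nothing to compare against beyond the citation, your reconstruction is sound and consistent with it.
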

We quote from~\cite[Number Theory Result~1]{Harper-AB} the following version of the P{\'o}lya-Vinogradov inequality with an explicit dependence on the conductor.
\begin{lemme}\label{lemme:PV}
Let~$\chi\mod{q}$ be a character of conductor~$1\neq r|q$, and~$M, N\geq 1$. Then
$$ \sum_{M<n\leq M+N} \chi(n) \ll \tau(q/r)\sqrt{r}\log r. $$
\end{lemme}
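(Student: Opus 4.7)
The plan is to reduce to the classical P{\'o}lya--Vinogradov inequality for the primitive character inducing~$\chi$, and to recover the factor~$\tau(q/r)$ by a M{\"o}bius inversion that handles the extra coprimality conditions. Since the paper $q$ can be much larger than the conductor~$r$, the non-primitive character $\chi$ differs from its primitive inducing character $\chi^*$ only by the support condition $(n, q/r)=1$, so removing this condition is the entire content of the argument.

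Concretely, I would let~$\chi^*$ denote the primitive character modulo~$r$ inducing~$\chi$, and write~$\chi(n) = \chi^*(n)\bfUn_{(n, q/r) = 1}$ (the condition $(n, r)=1$ is already built into $\chi^*$). Detecting the coprimality condition by~$\bfUn_{(n, q/r) = 1} = \sum_{d \mid (n, q/r)}\mu(d)$ and swapping the order of summation, one obtains
\[ \sum_{M<n\leq M+N}\chi(n) \;=\; \sum_{d \mid q/r}\mu(d)\chi^*(d) \sum_{M/d < m \leq (M+N)/d}\chi^*(m). \]

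Next, for every divisor~$d$ of~$q/r$, I would apply the classical P{\'o}lya--Vinogradov inequality to the inner sum: since $\chi^*$ is primitive of conductor~$r \neq 1$, there holds
\[ \Big|\sum_{a<m\leq b}\chi^*(m)\Big| \ll \sqrt{r}\log r \]
uniformly in real numbers~$a < b$. Combined with the trivial bound~$|\mu(d)\chi^*(d)|\leq 1$ and summation over the~$\tau(q/r)$ divisors of~$q/r$, this immediately yields the claimed estimate.

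There is essentially no obstacle: the whole argument is bookkeeping around the decomposition into a primitive character times a coprimality condition, and the analytic input is entirely the classical P{\'o}lya--Vinogradov bound, whose proof (via completion to a complete sum and Gauss-sum evaluation of $\chi^*$) depends on~$r$ alone. The only mild point to watch is that the inner interval endpoints~$M/d$, $(M+N)/d$ are real rather than integers, which is harmless because the P{\'o}lya--Vinogradov inequality holds uniformly in the endpoints.
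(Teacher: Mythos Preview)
Your proof is correct. The paper does not prove this lemma but simply quotes it from~\cite[Number Theory Result~1]{Harper-AB}; your argument is precisely the standard derivation underlying that reference.
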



\section{Sums of Kloosterman sums in arithmetic progressions}\label{sec:sums-kloost-sums}

Theorem~\ref{thm:quintilin} is proved by a systematic use of the Kuznetsov formula, which establishes a link between sums of Kloosterman sums and Fourier coefficients of holomorphic and Maa{\ss} cusp forms. There is numerous bibliography about this theory; we refer the reader to the books~\cite{Iwaniec-Book-Spectral,Iwaniec-Book-Automorphic} and to chapters 14--16 of~\cite{IK} for references.

Most of the arguments in~\cite{DI} generalizes without the need for substantial new ideas. We will introduce the main notations, and of course provide the required new arguments; but we will refer to~\cite{DI} for the parts of the proofs that can be transposed~\textit{verbatim}.

\subsection{Setting}\label{sec:DI-contexte}
\subsubsection{Kloosterman sums}

Let~$q\geq 1$. The setting is the congruence subgroup
$$ \Gamma = \Gamma_0(q) := \Big\{ \begin{pmatrix} a & b \\ c & d \end{pmatrix}\in SL_2(\bfZ), c\equiv 0\mod{q} \Big\}. $$
Let~$\chi$ be a character modulo~$q_0|q$, and~$\kappa\in\{0, 1\}$ such that~$\chi(-1)=(-1)^\kappa$. We warn the reader that the variable~$q$ has a different meaning in Sections~\ref{sec:DI-contexte} and~\ref{sec:DI-gc}, than in the statement of Theorem~\ref{thm:quintilin} (where it corresponds to~$qrs$). The character~$\chi$ induces a multiplier (\textit{i.e.} here, a multiplicative function) on~$\Gamma$ by
$$ \chi\Big(\begin{pmatrix}a & b \\ c & d\end{pmatrix}\Big) = \chi(d) .$$
The \emph{cusps} of~$\Gamma$ are~$\Gamma$-equivalence classes of elements~$\bfR\cup\{\infty\}$ that are parabolic, \textit{i.e.} each of them is the unique fixed point of some element of~$\Gamma$. They correspond to cusps on a fundamental domain. A set of representatives is given by rational numbers~$u/w$ where~$1\leq w$, $w|q$, $(u, w)=1$ and $u$ is determined~$\mod{(w, q/w)}$.

For each cusp~$\ca$, let~$\Gamma_\ca$ denote the stabilizer of~$\ca$ for the action of~$\Gamma$. A \emph{scaling matrix} is an element~$\sigma_\ca\in SL_2(\bfR)$ such that~$\sigma_{\ca}\infty = \ca$ and
$$  \Big\{\sigma_\ca\begin{pmatrix} 1&b\\0&1\end{pmatrix}\sigma_\ca^{-1}, b\in\bfZ\Big\}  = \Gamma_\ca .$$
Whenever~$\ca = u/w$ with~$u\neq 0$, $(u, w)=1$ and $w|q$, one can choose
\begin{equation}
\sigma_\ca = \begin{pmatrix} \ca\sqrt{[q, w^2]} & 0 \\ \sqrt{[q, w^2]} & (\ca\sqrt{[q, w^2]})^{-1} \end{pmatrix}.\label{eq:scaling-matrix}
\end{equation}
A cusp~$\ca$ is said to be \emph{singular} if~$\chi(\gamma)=1$ for any~$\gamma\in\Gamma_\ca$. When~$\ca = u/w$ with~$u$ and~$w$ as above, then this merely means that~$\chi$ has conductor dividing~$q/(w, q/w)$. The point at infinity is always a singular cusp, with stabilizer
$$ \Gamma_\infty = \Big\{\begin{pmatrix}1&\ast\\0&1\end{pmatrix}\Big\} .$$

For any pair of singular cusps~$\ca, \cb$ and any associated scaling matrices~$\sigma_\ca, \sigma_\cb$, define the set of moduli
$$ \cC(\ca, \cb) := \Big\{c\in\bfR_+^* :\ \exists a, b, d \in\bfR, \begin{pmatrix} a & b \\ c & d \end{pmatrix} \in \sigma_\ca^{-1}\Gamma\sigma_\cb \Big\} .$$
This set actually only depends on~$\ca$ and~$\cb$. For all~$c\in\cC(\ca, \cb)$, let~$\cD_{\ca\cb}(c)$ be the set of real numbers~$d$ with~$0<d\leq c$, such that
$$  \begin{pmatrix} a & b \\ c & d \end{pmatrix} \in \sigma_\ca^{-1}\Gamma\sigma_\cb $$
for some~$a, b\in\bfR$. For each such~$d$,~$a$ is uniquely determined~$\mod{c}$.

For any integers~$m, n\geq 0$, and any~$c\in\cC(\ca, \cb)$, the Kloosterman sum is defined as (see formula~(3.13) and Chapter~4 of~\cite{Iwaniec-Book-Topics})
$$ S_{\sigma_\ca\sigma_\cb}(m, n ; c) = \sum_{d\in\cD_{\ca\cb}(c)}\bar{\chi}(\sigma_\ca \big(\begin{smallmatrix}a & \ast \\ c & d\end{smallmatrix}\big) \sigma_\cb^{-1})\e\Big(\frac{am+dn}{c}\Big) $$
where~$\big(\begin{smallmatrix}a & \ast \\ c & d\end{smallmatrix}\big)$ denotes any matrix~$\gamma$ having lower row~$(c, d)$ such that~$\sigma_\ca\gamma\sigma_{\cb}^{-1}\in \Gamma$. This is well-defined by our hypotheses that~$\ca$ and~$\cb$ are singular. This definition allows for a great deal of generality. We quote from~\cite[section 2.1]{DI} the remark that the Kloosterman sums essentially depend only on the cusps~$\ca$, $\cb$, and only mildly on the scaling matrices~$\sigma_\ca$ and~$\sigma_\cb$, in the following sense. If~$\tilde{\ca}$ and~$\tilde{\cb}$ are two cusps respectively~$\Gamma$-equivalent to~$\ca$ and~$\cb$, with respective scaling matrices~$\tilde{\sigma_\ca}$ and~$\tilde{\sigma_\cb}$, then there exist real numbers~$t_1$ and~$t_2$, independent of~$m$ or~$n$, such that
$$ S_{\sigma_\ca\sigma_\cb}(m, n ; c) = \e(mt_1 + nt_2)S_{\tilde{\sigma_\ca}\tilde{\sigma_\cb}}(m, n ; c) .$$
Moreover, the converse fact holds, that for any reals~$t_1, t_2$, any cusps~$\ca$ and~$\cb$, and any scaling matrices~$\sigma_\ca$ and~$\sigma_\cb$, there exist scaling matrices~$\tilde{\sigma_\ca}$ and~$\tilde{\sigma_\cb}$ associated to~$\ca$ and~$\cb$ such that the equality above holds. This rather simple fact is of tremendous help because all of the results obtained through the Kuznetsov formula are uniform with respect to the scaling matrices, so that one can encode oscillating factors depending on~$m$ and~$n$ at no cost (it is crucial for separation of variables). Whenever the context is clear enough, we write
$$ S_{\ca\cb}(m, n ; c) $$
without reference to the scaling matrices.

\bigskip

The first example is~$\ca=\cb=\infty$ and~$\sigma_\ca=\sigma_\cb= 1$. Then~$\cC(\infty, \infty)=q\bfN$ and
\begin{equation}
S_{\infty\infty}(m, n ; c) = S_{\chi}(m, n ; c) = \ssum{d\mod{c}^\times}\bar{\chi}(d)\e\Big(\frac{\bar{d}m+dn}{c}\Big) \qquad (c\in q\bfN)\label{eq:somme-kloo-infini}
\end{equation}
is the usual (twisted) Kloosterman sum. Here and in the rest of the paper, we write~$\mod{c}^\times$ to mean a primitive residue class~$\mod{c}$.

The next example that we need is the case~$\ca=\cb$. The following is an extension of~\cite[Lemma~2.5]{DI}. It is proven in an identical way, so we omit the details.
\begin{lemme}
Assume~$\ca = u/w$ is a cusp with~$(u, w)=1$, $w|q$ and~$u\neq 0$. Assume that~$\ca$ is singular. Choose the scaling matrix as in~\eqref{eq:scaling-matrix}. Then~$\cC(\ca, \ca)=\frac{q}{(w, q/w)}\bfN $, and if~$c=\gamma q/(w, q/w)$ for some~$\gamma\in\bfN$,
\begin{equation}
S_{\ca\ca}(m, n ; c) = \e\Big((w, q/w)\frac{m-n}{uq}\Big)\underset{\delta\mod{c}}{{\sum}^\ast} \bar{\chi}\Big(\alpha +u\frac{\alpha\delta-1}{\gamma}\Big) \e\Big(\frac{m\alpha + n\delta}{c}\Big),\label{eq:expr-Saa}
\end{equation}
where, in the sum~${\sum}^\ast$, $\delta$ runs over the solutions~$\mod{c}$ of
\begin{equation}
(\delta, \gamma q/w)=1, \quad (\gamma+u\delta, w)=1, \quad \delta(\gamma+u\delta)\equiv u\mod{(w, q/w)},\label{eq:Saa-cond-delta}
\end{equation}
and~$\alpha$ is determined~$\mod{c}$ by the equations
\begin{equation}
\alpha\delta \equiv 1\mod{\gamma q/w}, \quad \alpha\equiv \gamma'\bar{u'}+u'\bar{(\gamma'+u'\delta)}\mod{w\gamma'}\label{eq:Saa-cond-alpha}
\end{equation}
where~$\gamma' = \gamma/(\gamma, u)$ and~$u'=u/(\gamma, u)$.
\end{lemme}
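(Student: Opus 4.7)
My plan is to mimic the strategy of~\cite[Lemma~2.5]{DI} essentially line by line, augmented to keep track of the nontrivial multiplier~$\bar\chi$. The computation is entirely local and mechanical once the correct parametrization of~$\sigma_\ca^{-1}\Gamma_0(q)\sigma_\ca$ is set up.

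First, I would fix a general~$M = \begin{pmatrix}a'&b'\\c'&d'\end{pmatrix}\in\Gamma_0(q)$ (so~$q\mid c'$) and expand~$\sigma_\ca^{-1}M\sigma_\ca$ using the scaling matrix~\eqref{eq:scaling-matrix}. The bottom-left entry works out to~$([q,w^2]/w^2)\bigl((d'-a')uw - b'w^2 + c'u^2\bigr)$; since~$w\mid q\mid c'$, the bracketed integer is divisible by~$w$, and collecting positive values yields~$\cC(\ca,\ca) = ([q,w^2]/w)\bfN = (q/(w,q/w))\bfN$, the surjectivity being a direct construction. For a fixed modulus~$c = \gamma q/(w,q/w)$ with~$\gamma\in\bfN$, the bottom-right entry~$d$ of~$\sigma_\ca^{-1}M\sigma_\ca$ then parametrizes~$\cD_{\ca\ca}(c)$. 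After a change of variable~$d\leftrightarrow\delta$ absorbing a constant offset, solving the linear system for~$(a',b',c',d')$ in terms of~$(c,\delta)$ and imposing the determinant condition~$a'd' - b'c' = 1$ produces the coprimality constraints and the quadratic congruence~\eqref{eq:Saa-cond-delta}; solving for the top-left entry~$a$ of~$\sigma_\ca^{-1}M\sigma_\ca$ modulo~$c$ by Chinese remaindering over~$\gamma q/w$ and~$w\gamma'$ yields~\eqref{eq:Saa-cond-alpha}. The splittings~$\gamma = \gamma'(\gamma,u)$ and~$u = u'(\gamma,u)$ appear precisely at this CRT step, to make the two moduli coprime.

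To finish, I would evaluate~$\bar\chi(M) = \bar\chi(d')$ by re-expressing~$d'$ in terms of~$(\alpha,\delta,\gamma,u)$, which produces the factor~$\bar\chi(\alpha + u(\alpha\delta-1)/\gamma)$. Singularity of~$\ca$ forces the conductor of~$\chi$ to divide~$q/(w,q/w)$, which is exactly what is needed to make this expression well defined modulo that conductor (noting that~$\gamma$ divides~$\alpha\delta-1$ by the first line of~\eqref{eq:Saa-cond-alpha}). The constant shift incurred in the substitution~$d\mapsto\delta$ propagates through the additive character~$\e((am+dn)/c)$ into the global phase~$\e((w,q/w)(m-n)/(uq))$, which is~$\delta$-independent and pulls out of the whole sum, leaving exactly the displayed expression~\eqref{eq:expr-Saa}.

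The main obstacle is purely bookkeeping: cleanly handling the case~$(\gamma,u)>1$ (which is precisely why~$\gamma'$ and~$u'$ must be introduced), and verifying at each step that every modular reduction is compatible with the conductor of~$\chi$ so that all character values are unambiguous. I anticipate no genuinely new analytic input beyond that already present in~\cite{DI}; the multiplier simply rides along multiplicatively through the computation.
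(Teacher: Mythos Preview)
Your proposal is correct and follows exactly the approach indicated by the paper, which simply states that the result is proved identically to~\cite[Lemma~2.5]{DI} while keeping track of the multiplier~$\bar\chi$. The outline you give --- conjugating a generic~$M\in\Gamma_0(q)$ by~$\sigma_\ca$, reading off~$\cC(\ca,\ca)$, parametrizing the lower row to obtain~\eqref{eq:Saa-cond-delta}, using CRT over~$\gamma q/w$ and~$w\gamma'$ for~\eqref{eq:Saa-cond-alpha}, and then evaluating~$\bar\chi(d')$ and the constant phase --- is precisely this computation, and your observation that singularity of~$\ca$ makes the character value well defined is the only genuinely new check beyond~\cite{DI}.
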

The sums~$S_{\ca\ca}(m, n ; c)$ are expressed by means of the Chinese remainder theorem (twisted multiplicativity) as a product of similar sums for moduli~$c$ that are prime powers. When~$c=p^\nu$ and~$\nu\geq 2$, a bound is obtained by means of elementary methods as in~\cite[Section~12.3]{IK}. When~$c$ is prime, the Weil bound (\textit{cf.}~\cite[Theorem~9.3]{KL}) from algebraic geometry can be used. In the general case, one obtains
\begin{lemme}\label{sec:Saa-Weil}
For all~$c\in\cC(\ca, \ca)$, $m, n\in\bfZ$, we have
\[ S_{\ca\ca}(m, n ; c) \ll (m, n, c)^{1/2}\tau(c)^{O(1)}(cq_0)^{1/2} \]
where~$q_0$ is the modulus of~$\chi$.
\end{lemme}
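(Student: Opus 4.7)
The plan is to follow the classical two-step scheme of \cite[Section 2]{DI} and \cite[Chapter 12]{IK}: first establish twisted multiplicativity in~$c$ via the Chinese remainder theorem to reduce to prime-power moduli, and then handle each prime power separately (Weil's bound at primes unramified for~$\chi$, elementary $p$-adic stationary phase in all other cases).

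Starting from the explicit formula \eqref{eq:expr-Saa}, the leading exponential factor is a unit of modulus~$1$, so it suffices to bound the inner sum
\[ T(m,n;c) := \underset{\delta\mod c}{\sum^\ast} \bar\chi\Big(\alpha + u\frac{\alpha\delta-1}{\gamma}\Big) \e\Big(\frac{m\alpha + n\delta}{c}\Big), \]
with~$\alpha=\alpha(\delta)$ determined by \eqref{eq:Saa-cond-alpha} and~$\delta$ restricted by \eqref{eq:Saa-cond-delta}. Write~$c = c_1 c_2$ with~$(c_1,c_2)=1$ and factor~$\chi=\chi_1\chi_2$ where the conductor of~$\chi_i$ is supported at the primes of~$c_i$. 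Solving the defining congruences for~$(\delta,\alpha)$ by CRT, one finds that~$\delta$ and~$\alpha$ split as~$\delta = c_2\bar{c_2}\delta_1 + c_1\bar{c_1}\delta_2$ and similarly for~$\alpha$, with~$\delta_i$ running over the analogous residues modulo~$c_i$. The character and the phase accordingly factor, yielding a twisted multiplicativity
\[ T(m,n;c_1c_2) = T^{(1)}(\bar{c_2}m,\bar{c_2}n;c_1)\, T^{(2)}(\bar{c_1}m,\bar{c_1}n;c_2), \]
where each factor has the same shape as the original sum but attached to~$\chi_i$ and modulus~$c_i$. Since~$(\bar{c_j}m,\bar{c_j}n,c_i)=(m,n,c_i)$, the multiplicative nature of the right-hand side of the target bound is preserved, and the problem reduces to the case~$c=p^\nu$ a prime power.

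For~$c=p$ prime with~$p \nmid q_0$, the character disappears, the parametrisation $\alpha\equiv \bar\delta \mod p$ from \eqref{eq:Saa-cond-alpha} collapses~$T(m,n;p)$ to a standard Kloosterman sum $S(m,n;p)$, and Weil's bound \cite[Thm.~9.3]{KL} gives~$|T| \le \tau(p)(m,n,p)^{1/2}p^{1/2}$. For~$c=p^\nu$ with~$\nu\geq 2$, or~$p\mid q_0$, one expands~$\delta = \delta_0 + p^{\lceil \nu/2\rceil}\delta_1$ with~$\delta_0$ mod~$p^{\lceil\nu/2\rceil}$ and~$\delta_1$ mod~$p^{\lfloor\nu/2\rfloor}$. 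Taylor expanding both~$\alpha(\delta)$ and the character~$\bar\chi(\cdot)$ (using~$\chi(1+p^{\lceil\nu/2\rceil}x) = \e(\psi_\chi(x)/p^{\lfloor\nu/2\rfloor})$ for the ramified part, with~$\psi_\chi$ a linear function depending on the conductor of~$\chi$ at~$p$) reduces the sum over~$\delta_1$ to a linear exponential sum in~$\delta_1$, which vanishes unless a derivative-type congruence on~$\delta_0$ holds. Counting solutions of that congruence exactly as in \cite[�12.3]{IK} yields
\[ |T(m,n;p^\nu)| \ll \tau(p^\nu)^{O(1)}(m,n,p^\nu)^{1/2} p^{\nu/2}\, (q_0,p^\infty)^{1/2}, \]
the last factor~$(q_0,p^\infty)^{1/2}$ accounting for the conductor of~$\chi$ at~$p$. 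Multiplying back across the primes dividing~$c$ assembles these into the claimed bound.

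The main technical obstacle is the stationary-phase analysis for~$p^\nu$, $\nu\ge 2$, in the presence of the twisted character $\bar\chi(\alpha + u(\alpha\delta-1)/\gamma)$: one must verify that the Taylor coefficients of the modified argument~$\alpha + u(\alpha\delta-1)/\gamma$ in~$\delta_1$ are non-degenerate and that the level-of-cancellation matches the classical Kloosterman case up to the conductor factor. Everything else is CRT bookkeeping and invocation of Weil's bound; the divisor loss $\tau(c)^{O(1)}$ is absorbed at each step.
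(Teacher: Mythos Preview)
Your proposal is correct and follows essentially the same approach as the paper. The paper's own justification is just the short paragraph preceding the lemma statement: twisted multiplicativity via the Chinese remainder theorem to reduce to prime-power moduli, then the Weil bound~\cite[Theorem~9.3]{KL} when~$c$ is prime, and elementary methods as in~\cite[Section~12.3]{IK} when~$c=p^\nu$ with~$\nu\geq 2$; you have simply fleshed out this sketch with more detail on how the CRT factorisation interacts with the character and how the $p$-adic stationary phase goes through in the presence of the twist.
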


Finally, we consider as in~\cite{DI} the following family of Kloosterman sums, which will be of particular interest to us.
\begin{lemme}\label{lemme:kloo-chi}
Assume that the level~$q$ is of the shape~$rs$, with~$q_0|r$, where~$q_0$ is the modulus of~$\chi$, and~$(r, s)=1$. The two cusps~$\infty$ and~$1/s$ are singular. Choose the scaling matrices
$$ \sigma_\infty = \Id, \qquad \sigma_{1/s} = \begin{pmatrix}\sqrt{r} & 0 \\ s\sqrt{r} & 1/\sqrt{r}\end{pmatrix}. $$
Then~$\cC(\infty, 1/s) = \{cs\sqrt{r}, c\in\bfN, (c, r)=1\}$, and for~$(c, r)=1$, we have
$$ S_{\infty,1/s}(m, n ; cs\sqrt{r}) = \bar{\chi}(c)\e\Big(\frac{n\bar{s}}{r}\Big)S(m\bar{r}, n ; sc) $$
where~$S(\ldots)$ in the right-hand side is the usual (untwisted) Kloosterman sum.
\end{lemme}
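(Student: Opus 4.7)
The plan is to unfold the Kloosterman sum by directly parametrizing matrices of $\sigma_\infty^{-1}\Gamma\sigma_{1/s}$ with prescribed lower-row entries, and then to split the resulting exponential via the Chinese remainder theorem.

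Before that, one must check that both cusps are singular and that the claimed scaling matrix is valid. The stabilizer $\Gamma_\infty$ consists of unipotent upper-triangular matrices, on which $\chi$ evaluates to $1$. For $\ca = 1/s$, the criterion recalled after~\eqref{eq:scaling-matrix} with $u=1,\ w=s,\ q=rs$ and $(w,q/w)=(s,r)=1$ becomes $q_0\mid rs$, which holds since $q_0\mid r$. Substituting $[rs,s^2]=rs^2$ in~\eqref{eq:scaling-matrix} recovers exactly the announced $\sigma_{1/s}$.

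Next I would parametrize $\gamma = \begin{pmatrix}a&b\\c&d\end{pmatrix}\in SL_2(\bfR)$ satisfying $\gamma\sigma_{1/s}^{-1}\in\Gamma_0(rs)$. Computing
$$ \gamma\sigma_{1/s}^{-1} = \begin{pmatrix} a/\sqrt r - bs\sqrt r & b\sqrt r \\ c/\sqrt r - ds\sqrt r & d\sqrt r \end{pmatrix}, $$
integrality of the entries forces $a=a'\sqrt r$, $b=b'/\sqrt r$, $d=d'/\sqrt r$ with $a',b',d'\in\bfZ$, and $c=c''s\sqrt r$ with $c''\in\bfN$; then divisibility of the lower-left by $rs$ is equivalent to $c''\equiv d'\bmod r$, while $\det\gamma=1$ reduces to $a'd'-b'sc''=1$, forcing $(d',sc'')=1$ and hence $(c'',r)=(d',r)=1$. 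This identifies $\cC(\infty,1/s)$ as announced, and describes $\cD_{\infty,1/s}(c''s\sqrt r)$ as the set of integers $d'\in(0,c''sr]$ with $d'\equiv c''\bmod r$ and $(d',c''s)=1$. The companion entry satisfies $a'\equiv\bar{d'}\bmod c''s$, and the $\bar\chi$-factor is $\bar\chi(d')$.

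It remains to evaluate the sum. Since $\chi$ has conductor $q_0\mid r$ and $d'\equiv c''\bmod r$, the character is constant: $\bar\chi(d')=\bar\chi(c'')$. The Chinese remainder theorem (applicable because $(r,c''s)=1$) lets one reparametrize $d'\bmod c''sr$ by the pair $d'\equiv c''\bmod r$ and $d'\equiv d^*\bmod c''s$, with $d^*\in(\bfZ/c''s\bfZ)^\times$. The phase decomposes as
$$ \frac{a'm}{c''s}+\frac{d'n}{c''sr}\equiv \frac{\bar{d^*}m+d^*n\bar r}{c''s}+\frac{n\bar s}{r}\pmod 1, $$
the $r$-part being simplified using $d'\equiv c''\bmod r$ together with $\overline{c''s}\equiv\bar{c''}\bar s\bmod r$. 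Summing over $d^*$ then yields $S(m,n\bar r;c''s)$, which equals $S(m\bar r,n;c''s)$ via the substitution $d\mapsto\bar r d$ in the Kloosterman sum (i.e.\ the identity $S(\alpha m,n;c)=S(m,\alpha n;c)$ for $(\alpha,c)=1$). The main obstacle is the Chinese remainder bookkeeping—tracking carefully which inverse is taken modulo which modulus, and ensuring the $r$-component collapses to the prefactor $\e(n\bar s/r)$ independently of $d^*$—after which the stated formula drops out.
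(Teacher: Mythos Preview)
Your proof is correct and follows essentially the same approach as the paper, which defers to~\cite[page~240]{DI} and merely notes the additional factor~$\bar\chi(D)=\bar\chi(d')$; you have simply made that computation explicit. One small remark: your deduction that $(c'',r)=(d',r)=1$ from $(d',sc'')=1$ is correct, but relies implicitly on combining $(d',c'')=1$ with the congruence $c''\equiv d'\bmod r$ (so that a common prime factor of $c''$ and $r$ would also divide $d'$), which you might spell out.
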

The main feature here is the presence of the character \emph{outside} the Kloosterman sums, as opposed to~\eqref{eq:somme-kloo-infini}. It is proven in a way identical to~\cite[page~240]{DI}, keeping track of an additional factor~$\bar{\chi}(D)$ in the summand.

\subsubsection{Normalization}

In order to state the Kuznetsov formula, we first fix the normalization. We largely borrow from~\cite{BHM}. We also refer to~\cite[Section~4]{DFI} for useful explanations on Maa{\ss} forms, and to~\cite{Proskurin} for a discussion in the case of general multiplier systems.

For each integer~$k>0$ with~$k\equiv \kappa\mod{2}$, we fix a basis~$\cB_k(q, \chi)$ of holomorphic cusp forms. It is taken orthonormal with respect to the weight~$k$ Petersson inner product:
$$ \langle f, g\rangle_k = \int_{\Gamma\bsl\bfH} y^k f(z)\bar{g(z)}\frac{\dd x\dd y}{y^2} \qquad (z=x+iy). $$

We let~$\cB(q, \chi)$ denote a basis of the space of Maa{\ss} cusp forms. In particular they are functions on~$\bfH$, are automorphic of weight~$\kappa\in\{0, 1\}$ (meaning they satisfy~\cite[formula~(5)]{Proskurin}), are square-integrable on a fundamental domain and vanish at the cusps (note that when~$\kappa=1$, they do not induce a function on~$\Gamma\bsl\bfH$). They are eigenfunctions of the $L^2$-extension of the Laplace-Beltrami operator
$$ \Delta = y^2\Big(\frac{\partial^2}{\partial x^2} + \frac{\partial^2}{\partial y^2}\Big) - i\kappa y\frac{\partial}{\partial x}. $$
This operator has pure point spectrum on the~$L^2$-space of cusp forms. For~$f\in\cB(q, \chi)$, we write~$(\Delta + s(1-s))f = 0$ with~$s=\tfrac12 + it_f$ and~$t_f\in\bfR\cup[-i/2, i/2]$. The~$(t_f)_{f\in\cB(q, \chi)}$ form a countable sequence with no limit point in~$\bfC$ (in particular, there are only finitely many~$t_f\in i\bfR$). We choose the basis~$\cB(q, \chi)$ orthonormal with respect to the weight zero Petersson inner product. Let
\begin{equation}
\theta := \sup_{f\in\cB(q, \chi)}|\Im t_f|,\label{eq:def-theta}
\end{equation}
then Selberg's eigenvalue conjecture is that~$\theta=0$ \textit{i.e.}~$t_f\in\bfR$ for all~$f\in\cB(q, \chi)$. Selberg proved that~$\theta\leq1/4$ (see~\cite[Theorem~4]{DI}), and the current best known result is~$\theta \leq 7/64$, due to Kim and Sarnak~\cite{Kim} (see~\cite{Sarnak} for useful explanations on this topic).

The decomposition of the space of square-integrable, weight~$\kappa$ automorphic forms on~$\bfH$ with respect to eigenspaces of the Laplacian contains the Eisenstein spectrum~$\cE(q, \chi)$ which turns out to be the orthogonal complement to the space of Maa{\ss} forms. It can be described explicitely by means of the Eisenstein series~$E_{\ca}(z ; \tfrac12+it)$ where~$\ca$ runs through singular cusps, and~$t\in\bfR$. Care must be taken because these are not square-integrable; see~\cite[Section~15.4]{IK} for more explanations.

\medskip

Let~$j(g, z) := cz+d$ where~$g=\begin{psmallmatrix} \ast & \ast \\ c & d\end{psmallmatrix}\in SL_2(\bfR)$. We write the Fourier expansion of~$f\in\cB_k(q, \chi)$ around a singular cusp~$\ca$ with associated scaling matrix~$\sigma_\ca$ as
\begin{equation}
f(\sigma_\ca z) j(\sigma_\ca, z)^{-k} = \sum_{n\geq 1} \rho_{f\ca}(n)(4\pi n)^{k/2}\e(nz).\label{eq:dvpt-fourier-holo}
\end{equation}
We write the Fourier expansion of~$f\in\cB(q, \chi)$ around the cusp~$\ca$ as
$$ f(\sigma_\ca z) \e^{-i\kappa \arg j(\sigma_\ca, z)} = \sum_{n\neq 0} \rho_{f\ca}(n) W_{\frac{|n|}{n}\frac{\kappa}{2}, it_f}(4\pi|n|y)\e(nx) $$
where the Whittaker function is defined as in~\cite[formula~(1.26)]{Iwaniec-Book-Spectral}. Finally, for every singular cusp~$\cc$, we write the Fourier expansion around the cusp~$\ca$ of the Eisenstein series associated with the cusp~$\cc$ as
$$ E_\cc(\sigma_\ca z, \tfrac12 + it)  \e^{-i\kappa \arg j(\sigma_\ca, z)} = c_{1,\cc}(t)y^{1/2+it} + c_{2,\cc}(t)y^{1/2-it} + \sum_{n\neq 0}\rho_{\cc\ca}(n, t)W_{\frac{|n|}{n}\frac{\kappa}{2}, it}(4\pi|n|y)\e(nx). $$

\subsubsection{The Kuznetsov formula}

Let~$\phi:\bfR_+\to\bfC$ be of class~$\cC^\infty$ and satisfy
\begin{equation}
\phi(0)=\phi'(0)=0, \qquad \phi^{(j)}(x)\ll (1+x)^{-2-\eta} \quad (0\leq j\leq 3)\label{eq:cond-phi-kuz}
\end{equation}
for some~$\eta>0$. In practice, the function~$\phi$ will be~$\cC^\infty$ with compact support in~$\bfR_+^*$. We define the integral transforms
\begin{align*}
\dphi(k) :=\ & 4i^k\int_0^\infty J_{k-1}(x)\phi(x)\frac{\dd x}x, \numberthis\label{eq:def-dphi} \\
\tphi(t) :=\ & \frac{2\pi it^\kappa}{\sinh(\pi t)}\int_0^\infty (J_{2it}(x)-(-1)^\kappa J_{-2it}(x))\phi(x)\frac{\dd x}x, \numberthis\label{eq:def-tphi} \\
\cphi(t) :=\ & 8i^{-\kappa} \cosh(\pi t)\int_0^\infty   K_{2it}(x)\phi(x)\frac{\dd x}x \numberthis\label{eq:def-cphi}
\end{align*}
where we refer to~\cite[Appendix~B.4]{Iwaniec-Book-Spectral} for the definitions and estimates on the Bessel functions. We have borrowed the normalization from~\cite{BHM-Burgess}, apart from a constant factor~$4$ which we included in the transforms. The sizes of these transforms is controlled by the following Lemma (we need only consider~$|t|\leq 1/4$ in the second estimate, by Selberg's theorem that~$\theta\leq 1/4$). The bounds we state are not the best that can be obtained, but they will be sufficient for our purpose.
\begin{lemme}[]\label{lemme-estim-Hankel}
If~$\phi$ is supported on~$x\asymp X$ with~$\|\phi^{(j)}\|_\infty\ll X^{-j}$ for~$0\leq j\leq 4$, then
\begin{align*}
|\dphi(t)| + \frac{|\tphi(t)|}{1+|t|^\kappa} + |\cphi(t)|&\ \ll \frac{1 + |\log X|}{1+X}\min\Big\{1, \Big(\frac{1+X^{3/2}}{1+|t|^3}\Big)\Big\} \qquad (t\in\bfR), \numberthis\label{eq:hankel-reg} \\
|\tphi(t)| + |\cphi(t)|&\ \ll \frac{1 + X^{-2|t|}}{1+X} \qquad\qquad\qquad (t\in [-i/4, i/4]).
\end{align*}
\end{lemme}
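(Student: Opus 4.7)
The plan is to bound each of the three transforms $\dphi$, $\tphi$, $\cphi$ by the same strategy: insert classical asymptotics for the Bessel kernels $J_{k-1}$, $J_{\pm 2it}$, $K_{2it}$ into the defining integral, then extract the $t$-decay by repeated integration by parts exploiting the Bessel differential equation. Because the three transforms are structurally similar (all of the form $\int \phi(x)B_{\cdot}(x)\,\dd x/x$ with $\phi$ supported on $x\asymp X$ and satisfying $\|\phi^{(j)}\|_\infty\ll X^{-j}$), I would carry out the estimates for $\tphi$ in detail and indicate the trivial modifications for the other two. The normalizing prefactors $t^\kappa/\sinh(\pi t)$ and $\cosh(\pi t)$ in~\eqref{eq:def-tphi}--\eqref{eq:def-cphi} are exactly calibrated to cancel the poles of the Bessel kernels at $t=0$, so once that cancellation is recorded, the Bessel factors can be treated as uniformly controlled functions of $x$.

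For the first, ``$(1+|\log X|)/(1+X)$'' part, I would split into the ranges $x\ll 1+|t|$ and $x\gg 1+|t|$. In the small-$x$ regime, the ascending series $J_\nu(x)\sim (x/2)^\nu/\Gamma(\nu+1)$ (with the analogous expansion for $K_\nu$) together with the cancellation in $(J_{2it}-(-1)^\kappa J_{-2it})/\sinh(\pi t)$ gives a bounded contribution $\ll 1+|\log X|$ (the logarithm accounting for the transitional range $X\asymp 1$). In the large-$x$ regime, using the standard expansion $J_\nu(x)=\sqrt{2/(\pi x)}\cos(x-\nu\pi/2-\pi/4)+O(x^{-3/2})$ and a single integration by parts against the oscillating cosine turns the naive bound $X^{-1/2}$ into $\ll 1/X$, as required; the exponential decay of $K_\nu$ at infinity makes $\cphi$ even easier.

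The second part, the $t$-decay factor $(1+X^{3/2})/(1+|t|^3)$, I would obtain through three integrations by parts leveraging the Bessel equation
\[ (xf'(x))'+(x-\tfrac{4t^2}{x})f(x)=0 \qquad (f=J_{2it}\text{ or }K_{2it}),\]
which, after multiplying by $\phi(x)/x$ and integrating, trades each factor of $4t^2$ against one derivative of $\phi$ (gaining $X^{-1}$) and a polynomial factor in $x$. Careful bookkeeping, separating the regimes $X\gtrsim |t|$ (where the Bessel function is oscillatory) and $X\lesssim |t|$ (where it is exponentially damped and the trivial bound already yields more than the claimed decay), shows that three such operations produce the $|t|^{-3}$ factor against the polynomial $(1+X^{3/2})$. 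I expect this bookkeeping, together with verifying that the boundary terms introduced by integration by parts vanish (thanks to the compact support of $\phi$), to be the main technical obstacle.

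Finally, for the range $t\in[-i/4,i/4]$, I would write $t=is$ with $|s|\leq 1/4$, so that $J_{\pm 2it}=J_{\mp 2s}$ and $K_{2it}=K_{-2s}$ have real order. The small-$x$ behaviour $\sim x^{\mp 2|s|}=x^{\mp 2|t|}$ accounts directly for the factor $X^{-2|t|}$; repeating the estimation of the first part, with the exponent of $x$ modified accordingly, yields the bound $(1+X^{-2|t|})/(1+X)$. Since $|s|\leq 1/4$, no spectral-gap type obstruction appears, and the large-$x$ asymptotics are unchanged.
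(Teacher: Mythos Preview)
Your outline is in the same spirit as the arguments the paper defers to (Lemma~7.1 of Deshouillers--Iwaniec and Lemma~2.1 of Blomer--Harcos--Michel): split into the regimes $x\lesssim 1+|t|$ and $x\gtrsim 1+|t|$, use the power series near the origin and the oscillatory asymptotic at infinity, and integrate by parts for the $t$-decay. The paper itself does not reprove any of this; it only records that the specific exponent pair in~\eqref{eq:hankel-reg} is obtained from the parameter choice ``$j=1$, $i=2$'' in the Blomer--Harcos--Michel argument, and that this choice requires control of $\phi^{(j)}$ only for $j\leq 4$.

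There is, however, a genuine gap in your mechanism for the $t$-decay. The Bessel differential equation (whose sign you have reversed: for $f=J_{2it}$ one has $(xf')'+(x+4t^2/x)f=0$, since $\nu^2=(2it)^2=-4t^2$) trades a factor of $4t^2$ for a \emph{second}-order differential operator in $x$; moving $(xf')'$ onto $\phi$ costs two integrations by parts and hence two derivatives of $\phi$. Each invocation of the ODE therefore produces a factor $t^{-2}$, not $t^{-1}$, so ``three such operations'' cannot yield $|t|^{-3}$ as you claim---you would get $t^{-6}$ (using six derivatives) or $t^{-2}$ or $t^{-4}$. The route taken in the references is instead the recursion $2\nu J_\nu(x)=x(J_{\nu-1}(x)+J_{\nu+1}(x))$, which trades a \emph{single} power of $\nu=2it$ for a factor $x\asymp X$ and a shift of the order; combining this with the differentiation rule $2J_\nu'=J_{\nu-1}-J_{\nu+1}$ and integration by parts gives a two-parameter family of bounds indexed by integers~$(i,j)$. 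The factor $(1+X^{3/2})/(1+|t|)^3$ in~\eqref{eq:hankel-reg}, together with the constraint that only four derivatives of~$\phi$ are used, falls out of the particular choice $j=1$, $i=2$ in that family. Your sketch would need to be reworked along these lines; as written, the bookkeeping you flag as ``the main technical obstacle'' cannot close because the exponents do not match.
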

\begin{proof}
These bounds are analogues of \cite[Lemma~7.1]{DI} and \cite[Lemma~2.1]{BHM-Burgess}. Taking into account the factor~$t^\kappa$ in front of~$\tphi(t)$, the arguments there are easily adapted. The only non-trivial fact to check is that the decaying factor in~\eqref{eq:hankel-reg} only requires the hypotheses~$\|\phi^{(j)}\|_\infty\ll X^{-j}$ for~$j\leq 4$. This is seen by reproducing the proof of~\cite[Lemma~2.1]{BHM-Burgess} with the choices~$j=1$ and~$i=2$.
\end{proof}
Recall that~$\kappa$ is defined by~$\chi(-1) = (-1)^\kappa$. We are ready to state the Kuznetsov formula for Dirichlet multiplier system and general cusps.
\begin{lemme}\label{lemme-Kuz}
Let~$\ca$ and~$\cb$ be two singular cusps with associated scaling matrices~$\sigma_{\ca}$ and~$\sigma_{\cb}$, and~$\phi:\bfR_+\to\bfC$ as in~\eqref{eq:cond-phi-kuz}. Let~$m, n\in\bfN$. Then
\begin{align*}
\sum_{c\in\cC(\ca, \cb)}\frac1c S_{\ca\cb}(m, n ; c)\phi\Big(\frac{4\pi\sqrt{mn}}{c}\Big) =\ & \cH + \cE + \cM, \numberthis\label{eq:Kuz-plus} \\
\sum_{c\in\cC(\ca, \cb)}\frac1c S_{\ca\cb}(m, -n ; c)\phi\Big(\frac{4\pi\sqrt{mn}}{c}\Big) =\ & \cE' + \cM', \numberthis\label{eq:Kuz-moins}
\end{align*}
where~$\cH$,~$\cE$,~$\cM$ (``holomorphic'', ``Eisenstein'', ``Maa\ss{}'') are defined by
\begin{align*}
\cH :=\ &\ssum{k>\kappa\\k\equiv \kappa\mod{2}}\sum_{f\in\cB_k(q, \chi)}\dphi(k)\Gamma(k)\sqrt{mn}\overline{\rho_{f\ca}(m)}\rho_{f\cb}(n), \numberthis\label{Kuz-def-holo}\\
\cE :=\ &\sum_{\cc\text{ sing.}}\frac1{4\pi}\int_{-\infty}^\infty \tphi(t)\frac{\sqrt{mn}}{\cosh(\pi t)}\overline{\rho_{\cc\ca}(m, t)}\rho_{\cc\cb}(n, t)\dd t, \numberthis\label{Kuz-def-eisen} \\
\cM :=\ &\sum_{f\in\cB(q, \chi)}\tphi(t_f)\frac{\sqrt{mn}}{\cosh(\pi t_f)}\overline{\rho_{f\ca}(m)}\rho_{f\cb}(n), \numberthis\label{Kuz-def-maass} \\
\cE' :=\ &\sum_{\cc\text{ sing.}}\frac1{4\pi}\int_{-\infty}^\infty \cphi(t)\frac{\sqrt{mn}}{\cosh(\pi t)}\overline{\rho_{\cc\ca}(m, t)}\rho_{\cc\cb}(-n, t)\dd t, \numberthis\label{Kuz-def-eisen-m} \\
\cM' :=\ &\sum_{f\in\cB(q, \chi)}\cphi(t_f)\frac{\sqrt{mn}}{\cosh(\pi t_f)}\overline{\rho_{f\ca}(m)}\rho_{f\cb}(-n). \numberthis\label{Kuz-def-maass-m}
\end{align*}

\begin{proof}
For~$\ca=\cb=\infty$, the formula~\eqref{eq:Kuz-plus} and the case~$\kappa=0$ of~\eqref{eq:Kuz-moins} can be found in Section~2.1.4 of~\cite{BHM}. The extension to general cusps~$\ca$, $\cb$ is straightforward.

The case~$\kappa=1$ of~\eqref{eq:Kuz-moins} was obtained by B. Topacogullari (private communication). We restrict here to mentionning that it can be proved by reproducing the computations of page~251 of~\cite{DI} and Section~5 of~\cite{DFI}\footnote{Note that in the expression for~$h_p(t)$ given on page~518 of~\cite{DFI}, the term~$\Gamma(1-\tfrac k2 - ir)$ should read~$\Gamma(1-\tfrac k2+ir)$.}.
\end{proof}

\end{lemme}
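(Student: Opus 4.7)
The plan is to realize both sides of \eqref{eq:Kuz-plus} and \eqref{eq:Kuz-moins} as two ways of computing a bilinear pairing, in the spirit of the classical Bruggeman--Kuznetsov derivation. Fix a singular cusp~$\ca$ and integer~$m\geq 1$, and to each suitable test function~$\psi:\bfR_+\to\bfC$ attach a Poincar\'e series
$$ P_{\ca, m}(z;\psi) := \sum_{\gamma\in\Gamma_\ca\bsl \Gamma} \overline{\chi(\gamma)}\, j(\sigma_\ca^{-1}\gamma, z)^{-\kappa} \e^{-i\kappa\arg j(\sigma_\ca^{-1}\gamma,z)} \psi\bigl(\Im\sigma_\ca^{-1}\gamma z\bigr)\, \e\bigl(m\Re\sigma_\ca^{-1}\gamma z\bigr), $$
of weight~$\kappa$ and multiplier~$\chi$. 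The double coset decomposition~$\Gamma_\ca\bsl\Gamma/\Gamma_\cb$, together with Bruhat decomposition for the non-identity cosets, shows that the $n$-th Fourier coefficient of $P_{\ca,m}(\cdot;\psi)$ at the cusp~$\cb$ is (up to a Kronecker term) a sum~$\sum_c c^{-1} S_{\ca\cb}(m,\pm n;c)\,\Psi_\pm(4\pi\sqrt{mn}/c)$, where~$\Psi_\pm$ is an explicit integral transform of~$\psi$ involving Bessel kernels. One then chooses~$\psi$ so that~$\Psi_\pm=\phi$; the classical inversion formulas for the Lebedev/Kontorovich transforms show that~$\psi$ exists and express it, via Mellin--Barnes integrals of Whittaker functions, in terms of the three transforms~$\dphi$, $\tphi$, $\cphi$ of~$\phi$ defined in \eqref{eq:def-dphi}--\eqref{eq:def-cphi}.

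For the spectral side, expand the same Poincar\'e series in the orthonormal basis provided by~$\cB_k(q,\chi)$, $\cB(q,\chi)$, and the Eisenstein spectrum~$\cE(q,\chi)$. Each inner product~$\langle P_{\ca,m}(\cdot;\psi), f\rangle$ unfolds against~$\Gamma_\ca\bsl\bfH$ and, using the Fourier expansion~\eqref{eq:dvpt-fourier-holo} of~$f$ at~$\ca$, reduces to~$\overline{\rho_{f\ca}(m)}$ times a Mellin-type transform of~$\psi$ against a Whittaker function. The classical Mellin identities (\textit{cf.}\ Gradshteyn--Ryzhik 6.621.3) convert these to the transforms~$\dphi, \tphi, \cphi$. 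Reading off the $n$-th coefficient of the spectral expansion at~$\cb$ then reproduces, term by term, the expressions $\cH$, $\cM$, $\cE$ of \eqref{Kuz-def-holo}--\eqref{Kuz-def-eisen-m}. The holomorphic sum~$\cH$ is absent from~\eqref{eq:Kuz-moins} because holomorphic forms have no negative Fourier coefficients. For~$\ca=\cb=\infty$ and~$\kappa=0$ this is the derivation in~\cite[Section~2.1.4]{BHM}; the extension to general singular cusps requires only conjugation by~$\sigma_\ca$, $\sigma_\cb$ in the Poincar\'e series, which is legitimate precisely because these cusps were assumed singular, so~$\chi$ is trivial on~$\Gamma_\ca$, $\Gamma_\cb$.

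The main difficulty is the case~$\kappa=1$ of~\eqref{eq:Kuz-moins}. Here the Whittaker-function indices $\tfrac{|n|}{n}\tfrac\kappa2$ for~$n>0$ and~$n<0$ differ, so positive and negative Fourier coefficients of a weight-one Maa{\ss} form are genuinely distinct spectral objects, and the Mellin integrals produced in the previous step involve~$\Gamma$-factors that do not cancel in the symmetric manner familiar from the even-multiplier case. Following the computation in~\cite[p.~251]{DI} and~\cite[Section~5]{DFI} (with the small erratum noted by the author), a careful reduction via the functional equations of the Whittaker function shows that the transform produced is still the~$K$-Bessel transform~$\cphi$; this is the step where most of the genuine work lies, and where I would be most careful about signs, conjugation relations between~$\rho_{f\ca}(n)$ and~$\rho_{f\ca}(-n)$, and the compatibility with the chosen orthonormalization of~$\cB(q,\chi)$.
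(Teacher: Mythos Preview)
Your sketch is correct and recapitulates the standard Poincar\'e-series derivation of the Kuznetsov formula that the paper simply defers to the literature: the paper's own proof consists of citing \cite[Section~2.1.4]{BHM} for~\eqref{eq:Kuz-plus} and the~$\kappa=0$ case of~\eqref{eq:Kuz-moins} at~$\ca=\cb=\infty$, noting that the extension to general singular cusps is straightforward, and attributing the~$\kappa=1$ case of~\eqref{eq:Kuz-moins} to Topacogullari via the computations of~\cite[p.~251]{DI} and~\cite[Section~5]{DFI}. You have in effect unpacked what those references contain, and your identification of the~$\kappa=1$ opposite-sign case as the delicate point matches the paper's emphasis.
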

The right-hand side of the Kuznetsov formula (the so-called spectral side) naturally splits into two contributions. The \emph{regular spectrum} consists in~$\cH$, $\cE$ and the contribution to~$\cM$ of those~$f\in\cB(q, \chi)$ with~$t_f\in\bfR$ ; the conjecturally inexistant \emph{exceptional spectrum} is the contribution to~$\cM$ of those~$f$ with~$t_f\in i\bfR^*$ (similarly with~$\cE'$ and~$\cM'$). The technical reason for this distinction is the growth properties of the integral transforms. Indeed, when~$X$ is small (\textit{i.e.} when the average over the moduli of the Kloosterman sums is long, since~$X\asymp \sqrt{mn}/c$), we see from Lemma~\ref{lemme-estim-Hankel} that while~$\dphi(t)$, $\tphi(t)$ and~$\cphi(t)$ are essentially bounded for~$t\in\bfR$, $\tphi(it)$ is roughly of size~$X^{-2|t|}$ when~$t\in[-1/2, 1/2]$.

We remark that in contrast with other works (\textit{e.g.}~\cite{BM2}), we do not make use of Atkin-Lehner's newform theory, nor of Hecke theory. In fact, we do not use any information about the Fourier coefficients~$\rho_{f\ca}(n)$ and~$\rho_{\cc\ca}(n, t)$ other than the fact that Kuznetsov's formula holds, so the reader unfamiliar with the subject can go through the following sections without knowing what they are. The main feature of the Kuznetsov formula which is used is the decay properties of the integral transforms~\eqref{eq:def-dphi}-\eqref{eq:def-cphi}, and the fact that it separates the variables~$m$ and~$n$ in a way that combines very nicely with the Cauchy--Schwarz inequality.

\subsection{Large sieve inequalities}\label{sec:DI-gc}
\subsubsection[Quadratic forms with Saa]{Quadratic forms with $S\sb{\ca\ca}$}

Given~$N\in\bfN$, $\vth\in\bfR_+^*$, $\lambda\geq 0$, a sequence~$(b_n)$ of complex numbers, a singular cusp~$\ca$ and~$c\in\cC(\ca, \ca)$, let
\[ B_{\ca}(\lambda, \vth ; c, N) := \sum_{N < m, n\leq 2N}b_m\bar{b_n}\e^{-\lambda \sqrt{mn}} S_{\ca\ca}(m, n, c)\e\Big(\frac{2\sqrt{mn}}{c}\vth\Big). \]
We also define
$$ \|b_N\|_2 := \Big(\sum_{N<n\leq2N} |b_n|^2\Big)^{1/2} .$$
The following extends~\cite[Proposition~3]{DI}.
\begin{lemme}[{\cite[Proposition~3]{DI}}]\label{lemme:Ba}
We have
\begin{align*}
|B_{\ca}(\lambda, \vth ; c, N)| &\ \leq \tau(c)^{O(1)}(q_0 c)^{1/2}N\|b_N\|^2, \numberthis\label{eq:Ba-q0} \\
|B_{\ca}(\lambda, \vth ; c, N)| &\ \ll (c + N + \sqrt{\vth c N})\|b_N\|^2, \\
|B_{\ca}(\lambda, \vth ; c, N)| &\ \ll_\ee \vth^{-1/2}c^{1/2}N^{1/2+\ee}\|b_N\|^2 \numberthis\label{eq:Ba-3}
\end{align*}
where the last bounds holds for~$\vth<2$ and~$c<N$.
\end{lemme}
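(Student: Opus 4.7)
Bound~\eqref{eq:Ba-q0} follows from the Weil-type estimate of Lemma~\ref{sec:Saa-Weil}: inserting $|S_{\ca\ca}(m,n;c)|\ll(m,n,c)^{1/2}\tau(c)^{O(1)}(q_0c)^{1/2}$, separating variables via AM--GM as $(m,n,c)^{1/2}\le\tfrac12((m,c)^{1/2}+(n,c)^{1/2})$, and using the divisor estimate $\sum_{N<n\le 2N}(n,c)\ll N\tau(c)$ together with Cauchy--Schwarz on the weights $(|b_n|)$ produces the factor $N\|b_N\|^2$.

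For the remaining two bounds, my plan is to transpose the proof of~\cite[Proposition~3]{DI} from the $(\infty,\infty)$ setting to the cusp pair $(\ca,\ca)$. Starting from the explicit expansion~\eqref{eq:expr-Saa}, I would substitute it into the definition of $B_\ca$, apply Cauchy--Schwarz in the $b_m$-variable, and expand the resulting square. This reduces the problem to estimating, for each pair $(m_1,m_2)$, a quantity of the form
\[ \underset{\delta_1,\delta_2\bmod c}{{\sum}^\ast\,{\sum}^\ast} \bar\chi(\cdots)\chi(\cdots)\sum_{N<n\le 2N}\e\Big(\tfrac{(\delta_1-\delta_2)n}{c}+\tfrac{2\vth(\sqrt{m_1n}-\sqrt{m_2n})}{c}\Big)w(n), \]
where $w$ is a smooth cutoff. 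Poisson summation in $n$ with dual frequency $h$ converts the inner sum into a complete exponential sum times the Fourier transform of the smooth $\vth$-phase. The diagonal $\delta_1=\delta_2$ with $h=0$ furnishes the $c\|b\|^2$ contribution; the off-diagonal $h=0$ term furnishes $N\|b\|^2$; and the $h\neq 0$ frequencies, handled via the Weil bound for the resulting character-twisted Kloosterman sum together with stationary-phase analysis on the $\vth$-dependent oscillatory factor, deliver the $\sqrt{\vth cN}\|b\|^2$ piece. This assembles the middle estimate. Bound~\eqref{eq:Ba-3} is then obtained by exploiting the oscillation of $\e(2\vth\sqrt{mn}/c)$ more aggressively: in the regime $\vth<2$, $c<N$ the phase is non-stationary in $n$, and repeated integration by parts yields the additional $\vth^{-1/2}$ savings.

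The main technical obstacle, absent from the $(\infty,\infty)$ case of~\cite{DI}, is the nonlinear character weight $\bar\chi(\alpha+u(\alpha\delta-1)/\gamma)$ from~\eqref{eq:expr-Saa}, in which $\alpha$ and $\delta$ are linked via~\eqref{eq:Saa-cond-alpha}. I expect that after completing the sum over $\delta$ modulo $c$ and applying Poisson in $n$, the resulting complete exponential sums remain character-twisted Kloosterman sums amenable to the Weil bound; the ensuing square-root cancellation then absorbs the $q_0^{1/2}$ factor, which explains the absence of $q_0$-dependence in the middle estimate and in~\eqref{eq:Ba-3}. The bookkeeping of the prefactor $\e((w,q/w)(m-n)/(uq))$ and of the denominators appearing in~\eqref{eq:Saa-cond-alpha} (in particular the coprimality constraints $(\gamma+u\delta,w)=1$) will require care, but should not alter the core harmonic analysis, which mirrors that of~\cite{DI}.
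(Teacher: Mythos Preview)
Your argument for~\eqref{eq:Ba-q0} via Lemma~\ref{sec:Saa-Weil} is fine. For the other two bounds, however, your proposed handling of the character weight is both more complicated than necessary and not clearly workable as stated.

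For the middle bound, no Cauchy--Schwarz, Poisson, or Weil input is needed at all. One simply expands~$S_{\ca\ca}(m,n;c)$ via~\eqref{eq:expr-Saa} and applies the triangle inequality in the~$\delta$-sum: since~$|\bar\chi(\cdot)|\le 1$, the character factor is bounded trivially and what remains is exactly the untwisted quantity treated in~\cite[page~256]{DI}. That argument then transposes verbatim.

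For the third bound~\eqref{eq:Ba-3}, Cauchy--Schwarz is indeed the starting point (as in the paper and as you propose), yielding a sum over~$m_1,m_2,\delta_1,\delta_2$ with weight~$\bar\chi(r_1)\chi(r_2)$ and an inner sum~$\sum_n f(n)$, followed by Poisson in~$n$. But your expectation that the resulting object is a character-twisted Kloosterman sum to which Weil applies is unfounded: after expanding the square you have a \emph{double} sum over~$\delta_1,\delta_2$, not a single Kloosterman sum, and the nonlinear dependence of~$r_j$ on~$\delta_j$ via~\eqref{eq:Saa-cond-alpha} makes it far from clear that any square-root cancellation is available. The paper's observation that sidesteps this entirely is: the argument of~\cite[page~256]{DI} splits into the cases~$\alpha_1\equiv\alpha_2\mod c$ and~$\alpha_1\not\equiv\alpha_2\mod c$. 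In the off-diagonal case one takes the triangle inequality (so~$|\bar\chi(r_1)\chi(r_2)|\le 1$ suffices). In the diagonal case~$\alpha_1\equiv\alpha_2\mod c$, the relations~\eqref{eq:Saa-cond-alpha} force~$\delta_1\equiv\delta_2\mod c$ as well, hence~$r_1=r_2$ and the~$\chi$-factors \emph{cancel}. No Weil bound is invoked; this is why~$q_0$ is absent from the last two estimates. Your ``additional~$\vth^{-1/2}$ savings via integration by parts'' is not the mechanism; the~$\vth^{-1/2}$ arises already inside the~\cite{DI} argument from the analysis of~$\sum_n f(n)$.

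Finally, you do not address the parameter~$\lambda$. The reduction to~$\lambda=0$ is by Mellin inversion for~$\e^{-y}$ at~$y=\lambda\sqrt{mn}$, using the representation on the line~$\Re s=1$ when~$\lambda N\ge 1$ and on~$\Re s=-1/2$ otherwise.
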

\begin{proof}
Suppose~$\lambda=0$. The first bound is an immediate consequence of~Lemma~\ref{sec:Saa-Weil}. For the second bound, the proof given in~\cite[page~256]{DI} transposes without any change: after expanding out the sum~$S_{\ca\ca}(\dotsc)$, one uses the triangle inequality with the effect that the factors involving~$\chi$ are trivially bounded. For the last bound, the proof is adapted with the following modification: the Cauchy--Schwarz inequality yields
\begin{equation}
|B_\ca(0, \vth; c, N)|^2 \leq \|b_N\|_2^2 \ssum{N<m_1, m_2\leq 2N \\ \delta_1, \delta_2} b_{m_1} \bar{b_{m_2}}\bar{\chi(r_1)}\chi(r_2) \e\Big(\frac{m_1\delta_1-m_2\delta_2}{c}\Big)\sum_n f(n)\label{eq:Ba-apres-cauchy}
\end{equation}
where~$f(n)$ is defined as in~\cite[page~256]{DI}, $\delta_1$ and~$\delta_2$ run over residue classes modulo~$c$ satisfying~\eqref{eq:Saa-cond-delta}, and~$r_j := \delta_j^{-1} + u(\alpha_j\delta_j-1)/\gamma$ for~$j\in\{1, 2\}$, where~$\alpha_j$ is determined by~\eqref{eq:Saa-cond-alpha}. The only difference is the presence of the~$\chi$ factors. Upon using Poisson summation on the sum~$\sum_n f(n)$, the argument is split in two cases according to whether~$\alpha_1\equiv\alpha_2\mod{c}$ or not. If~$\alpha_1\not\equiv \alpha_2\mod{c}$, then one uses the triangle inequality on~\eqref{eq:Ba-apres-cauchy} so that the~$\chi$ factors do not intervene. If on the contrary~$\alpha_1\equiv\alpha_2\mod{c}$, then we deduce from~\eqref{eq:Saa-cond-alpha} that also~$\delta_1\equiv\delta_2\mod{c}$. The~$\chi$ factors cancel out and the rest of the argument carries through without change.

The case of arbitrary~$\lambda\geq 0$ reduces to the case~$\lambda=0$ by Mellin inversion
$$ \e^{-y} = \frac1{2\pi i}\int_{1-i\infty}^{1+i\infty} \Gamma(s) y^{-s} \dd s = 1 + \frac1{2\pi i}\int_{-1/2-i\infty}^{-1/2+i\infty} \Gamma(s)y^{-s}\dd s $$
at~$y=\lambda\sqrt{mn}$, using the first expression when~$\lambda N\geq 1$ and the second otherwise.
\end{proof}

\subsubsection{Large sieve inequalities for the regular spectrum}

We proceed to state the following large sieve-type inequalities, which extend~\cite[Proposition~4]{DI}.
\begin{prop}\label{prop:grand-crible}
Let~$(a_n)$ be a sequence of complex numbers, and~$\ca$ a singular cusp for the group~$\Gamma_0(q)$ and Dirichlet multiplier~$\chi\mod{q_0}$. Suppose~$T\geq 1$ and $N\geq 1/2$. Then each of the three quantities
\begin{align*}
& \ssum{\kappa<k\leq T\\k\equiv \kappa\mod{2}} \Gamma(k)\sum_{f\in\cB_k(q, \chi)}\Big|\sum_{N<n\leq2N} a_n \sqrt{n} \rho_{f\ca}(n)\Big|^2,   \numberthis\label{eq:GC-holo} \\
& \ssum{f\in\cB(q, \chi)\\ |t_f|\leq T} \frac{(1+|t_f|)^{\pm \kappa}}{\cosh(\pi t_f)}\Big|\sum_{N<n\leq 2N}a_n\sqrt{n} \rho_{f\ca}(\pm n)\Big|^2,   \numberthis\label{eq:GC-maass} \\
& \sum_{\cc\text{ sing.}} \int_{-T}^T \frac{(1+|t|)^{\pm \kappa} }{\cosh(\pi t)}\Big|\sum_{N<n\leq 2N}a_n\sqrt{n} \rho_{\cc\ca}(\pm n, t)\Big|^2\dd t,   \numberthis\label{eq:GC-eisen}
\end{align*}
is majorized by
\[ O_\ee\big((T^2 + q_0^{1/2}\mu(\ca)N^{1+\ee})\|a_N\|_2^2\big). \]
Here, if~$\ca$ is equivalent to~$u/w$ with~$w|q$ and~$(u, w)=1$, then~$\mu(\ca) := (w, q/w)/q$.
\end{prop}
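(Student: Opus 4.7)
The plan is to follow the template of Deshouillers--Iwaniec's proof of~\cite[Proposition~4]{DI}, exploiting duality with the Kuznetsov formula (Lemma~\ref{lemme-Kuz}, taken with $\ca=\cb$) combined with the bilinear Weil-type bounds of Lemma~\ref{lemme:Ba}. The three quantities~\eqref{eq:GC-holo}--\eqref{eq:GC-eisen} are non-negative spectral squares appearing on the right-hand side of either~\eqref{eq:Kuz-plus} or~\eqref{eq:Kuz-moins}. By choosing the test function $\phi$ appropriately I can arrange that the target piece is dominated by the full spectral side, reducing the problem to an upper bound for the sum of Kloosterman sums on the Kuznetsov left-hand side.

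Concretely, for~\eqref{eq:GC-holo} together with the $+n$ cases of~\eqref{eq:GC-maass}--\eqref{eq:GC-eisen}, I would construct $\phi\in\cC_c^\infty(\bfR_+^*)$ supported at an appropriate scale $X$ (depending on $T$ and $N$), with $\dphi(k)\geq 0$ for $k\equiv\kappa\mod{2}$ and $\tphi(t)\geq 0$ on $\bfR\cup i[-1/2,1/2]$, such that moreover $\dphi(k)\Gamma(k)\gg 1$ for $\kappa<k\leq T$ and $\tphi(t)\gg(1+|t|)^\kappa/\cosh(\pi t)$ for real $|t|\leq T$. Such a construction is carried out in~\cite[\S7]{DI} by means of Bessel orthogonality. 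Multiplying~\eqref{eq:Kuz-plus} by $a_m\bar{a_n}$, summing over $m,n\in(N,2N]$ and using the non-negativity of every spectral piece, each of~\eqref{eq:GC-holo},~\eqref{eq:GC-maass},~\eqref{eq:GC-eisen} is bounded by
\[ O\Big(\sum_{c\in\cC(\ca,\ca)}\frac{1}{c}\Big|\summ_{N<m,n\leq 2N} a_m\bar{a_n}\, S_{\ca\ca}(m,n;c)\,\phi\big(\tfrac{4\pi\sqrt{mn}}{c}\big)\Big|\Big). \]
For the $-n$ versions of~\eqref{eq:GC-maass}--\eqref{eq:GC-eisen}, I would use~\eqref{eq:Kuz-moins} with a test function tuned so that $\cphi(t)$ has the analogous lower bounds on $\bfR$; any contribution from the bounded exceptional spectrum $t\in i(-1/2,1/2)$ can be absorbed into the final bound using the transform estimates of Lemma~\ref{lemme-estim-Hankel}.

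Then I would split the sum over $c\in\cC(\ca,\ca)$ into dyadic ranges and apply Lemma~\ref{lemme:Ba} range by range, with $\vartheta$ chosen according to the oscillation of $\phi$. The Weil-type bound~\eqref{eq:Ba-q0} supplies the factor $q_0^{1/2}$ and, after dyadic summation over the $c$-range supported by $\phi$ (whose lower end is $1/\mu(\ca)$, the smallest element of $\cC(\ca,\ca)$), produces the $q_0^{1/2}\mu(\ca)N^{1+\ee}\|a_N\|_2^2$ contribution; the dispersion bound~\eqref{eq:Ba-3} and the middle estimate of Lemma~\ref{lemme:Ba} handle the complementary range and yield the $T^2\|a_N\|_2^2$ contribution, which dominates for small $N$.

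The main obstacles are essentially bookkeeping. First, one must secure the existence of a test function with the required sign and lower-bound properties simultaneously for $\dphi$ and $\tphi$ (or $\cphi$), which is a purely analytic question about Bessel transforms already resolved in~\cite[\S7]{DI}. Second, one must track the new factors $q_0^{1/2}$ and $\mu(\ca)$ through the dyadic decomposition. The presence of the twist $\chi$ introduces no further analytic complication at this stage, since the character factors inside $S_{\ca\ca}(m,n;c)$ are already absorbed into Lemma~\ref{lemme:Ba}; the argument of~\cite[pp.~258--264]{DI} therefore transposes with essentially only notational modifications.
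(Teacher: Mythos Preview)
Your overall strategy---reduce the spectral squares to sums of~$S_{\ca\ca}$ via positivity, then feed these into Lemma~\ref{lemme:Ba}---is the right one, and the endgame (dyadic splitting in~$c$, with the Weil bound~\eqref{eq:Ba-q0} supplying the factor~$q_0^{1/2}\mu(\ca)$) matches the paper. However, the mechanism you describe for producing the Kloosterman side differs from what the paper (and~\cite{DI}) actually do, and your citation of~\cite[\S7]{DI} for the test-function construction is a misreading.

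The paper does \emph{not} apply the Kuznetsov formula of Lemma~\ref{lemme-Kuz} with a single pre-chosen~$\phi\in\cC_c^\infty$ whose transforms~$\dphi,\tphi,\cphi$ are simultaneously non-negative and bounded below on~$[-T,T]$. Instead it treats the holomorphic and Maa{\ss}/Eisenstein pieces by separate identities: the Petersson formula~\eqref{eq:Petersson} for~\eqref{eq:GC-holo}, and the pre-Kuznetsov formula~\eqref{eq:PreKuz} (with its intrinsically non-negative kernel~$H(t,r)$) for~\eqref{eq:GC-maass}--\eqref{eq:GC-eisen}. One then \emph{averages over the auxiliary spectral parameter}: sum~\eqref{eq:Petersson} over~$k$ with weight~$(k-1)\e^{-(k-1)/T}$, and integrate~\eqref{eq:PreKuz} over~$r$ with weight~$r^2\cosh(\pi r)\e^{-(r/T)^2}$ (times~$(\tfrac14+r^2)^{-1}$ in the negative-sign case, cf.~\eqref{eq:complements}). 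Positivity on the spectral side is then automatic by construction, and the geometric side carries an explicit kernel~$E_T(x)$ or~$\Phi_\pm(x)$ which one analyzes directly via integral representations of~$J_1$ and~$K_{2ir}$. This is precisely what~\cite[\S7]{DI} does; the new work in the paper is the~$\kappa=1$ case of these kernel computations.

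Your route would require exhibiting a compactly supported~$\phi$ with all three transforms non-negative \emph{and} with the stated lower bounds; such a construction is not in~\cite[\S7]{DI} and is not obviously available. The paper's approach trades that search for a slightly more hands-on analysis of~$E_T$ and~$\Phi_\pm$, but the subsequent appeal to Lemma~\ref{lemme:Ba} is exactly as you outline.
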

\begin{proof}
These formulas are deduced from two summation formulas, namely the Petersson formula~\cite[Theorem~3.6]{Iwaniec-Book-Topics}
\begin{equation}\label{eq:Petersson}
\begin{aligned}
\bfUn_{m=n} + & 2\pi i^{-k}\sum_{c\in\cC(\ca, \ca)} \frac{1}{c}S_{\ca\ca}(m, n ; c)J_{k-1} \Big(\frac{4\pi\sqrt{mn}}{c}\Big) \\
&= 4\Gamma(k-1)\sqrt{mn} \sum_{f\in\cB_k(q, \chi)}\bar{\rho_{f\ca}(m)}\rho_{f\ca}(n),
\end{aligned}
\end{equation}
valid for~$k>1$, $k\equiv \kappa\mod{2}$, and a ``pre-Kuznetsov'' formula~\cite[Proposition~5.2]{DFI} which, for general cusps, is
\begin{equation}
\begin{aligned}
& \frac{\big|\Gamma(1\mp\tfrac \kappa 2 +ir)|^2}{4\pi^2}\Big\{\bfUn_{m=n} +  \sum_{c\in \cC(\ca, \ca)} \frac1c S_{\ca\ca}(\pm m, \pm n ; c) I_{\pm}\Big(\frac{4\pi \sqrt{mn}}{c}\Big)\Big\} \\
= \sum_{f\in \cB(q, \chi)}& \frac{\sqrt{mn}}{\cosh(\pi t_f)} H(t_f, r)\bar{\rho_{f\ca}(\pm m)}\rho_{f\ca}(\pm n) + \frac1{4\pi}\sum_{\cc\text{ sing.}}\int_{-\infty}^\infty \frac{\sqrt{mn}}{\cosh(\pi t)} H(t, r)\bar{\rho_{\cc\ca}(\pm m)}\rho_{\cc\ca}(\pm n) \dd t
\end{aligned}\label{eq:PreKuz}
\end{equation}
for all real~$r$ and positive integers~$m$, $n$. Here,
$$ H(t, r) = \frac{\cosh(\pi t)\cosh(\pi r)}{\cosh(\pi(t-r))\cosh(\pi(t+r))} \qquad (r, t\in\bfC, r\not\in \pm t+i/2+i\bfZ), $$
$$ I_{\pm}(x) = -2x\int_{-i}^i (-i v)^{\pm\kappa - 1} K_{2ir}(v x)\dd v \qquad (x>0), $$
where~$v$ varies on the half-circle~$|v|=1$, $\Re(v)\geq 0$ counter-clockwise. Note that by the complement formula
\begin{equation}\label{eq:complements}
\big|\Gamma(1 - \tfrac \epsilon 2 + ir)\big|^2 = \frac{\pi}{\cosh(\pi r)} \times \begin{cases} 1, & \epsilon = 1, \\ \tfrac 14 + r^2, & \epsilon = -1. \end{cases}
\end{equation}

Given the formulas~\eqref{eq:Petersson} and~\eqref{eq:PreKuz}, the arguments in~\cite[pages~258-261]{DI} are adapted as follows. When~$\kappa=0$, the details are strictly identical. Consider the case~$\kappa=1$ of~\eqref{eq:GC-holo}. We multiply both sides of~\eqref{eq:Petersson} by~$(k-1)\e^{-(k-1)/T}\bar{a_m}a_n$ and sum over~$k$, $m$ and~$n$. The analogue of the function~$E_K(x)$ defined in~\cite[page~258]{DI} is (up to a constant factor) the function
$$ E_T(x) = \sum_{\ell\geq 1} (-1)^\ell 2\ell \e^{-2\ell/T} J_{2\ell}(x) = -\frac12\sinh\big(\frac1T\big) \int_0^1 \frac{u^2xJ_1(ux)\dd u}{(\cosh(1/T)^2 - u^2)^{3/2}}, $$
as can be seen by reproducing the computations in~\cite[page~316]{Iwaniec-MV}\footnote{There is a slight convergence issue in the Fourier integral for~$yJ_1(y)$, which is resolved by changing~$b=\cosh(1/T)$ to~$b+i\ee$, $\ee>0$ and letting~$\ee\to0$.}. We then write (see~\cite[eq. 8.411.3, page~912]{GR})
$$ J_1(y) = \frac2\pi \int_0^{\pi/2} \cos \tau \sin(y\cos \tau) \dd\tau, $$
split the integral at~$\Delta\in(0, \pi/2]$ and deduce the bound~\eqref{eq:GC-holo} by following the steps in~\cite[page~259]{DI}.

Consider next the case~$\kappa=1$ and positive sign of~\eqref{eq:GC-maass} and~\eqref{eq:GC-eisen}. We multiply both sides of~\eqref{eq:PreKuz} by~$r^2\cosh(\pi r)\e^{-(r/T)^2}\bar{a_m}a_n$, integrate over~$r\in\bfR$ and sum over~$m$ and~$n$. The analogue of the function~$\Phi(x)$ of~\cite[page~260]{DI} is the function
$$ \Phi_+(x) = \int_{-\infty}^\infty r^2 \e^{-(r/T)^2} \int_{-i}^i K_{2ir}(xv) \dd v \dd r. $$
We use the expression~$K_{2ir}(y) = \int_0^\infty \e^{-y\cosh \xi} \cos(2r\xi) \dd\xi$ ($\Re y>0$). For~$x>0$, we obtain by integrations by parts
\begin{align*}
\Phi_+(x) = &\ -i\sqrt{\pi} T^3 \int_0^\infty \e^{-(\xi T)^2} \xi \tanh \xi \Big\{\cos(x\cosh \xi) - \frac12 \int_{-1}^1 \cos(x\vth \cosh \xi)\dd\vth\Big\} \dd\xi \\
= &\ i\sqrt{\pi} \frac{T^3}x \int_0^\infty \e^{-(\xi T)^2} (1 - 2(\xi T)^2) \sinh(x\cosh \xi)\frac{\dd\xi}{\cosh \xi},
\end{align*}
and from there, the bounds~\eqref{eq:GC-maass} and~\eqref{eq:GC-eisen} are obtained by reproducing the computations of~\cite[page~261]{DI}.

Consider finally the case of negative sign in~\eqref{eq:GC-maass} and~\eqref{eq:Petersson}. We multiply both sides of~\eqref{eq:PreKuz} by~$r^2\cosh(\pi r)/(\tfrac14+r^2)\e^{-(r/T)^2}\bar{a_m}a_n$. The analogue of the function~$\Phi(x)$ of~\cite[page~260]{DI} is now
$$ \Phi_-(x) = \int_{-\infty}^\infty r^2 \e^{-(r/T)^2} \int_{-i}^i K_{2ir}(xv) \frac{\dd v}{v^2} \dd r, $$
and we have by integration by parts
\begin{align*}
\Phi_-(x) = &\ i\sqrt{\pi} T^3 \int_0^\infty \e^{-(\xi T)^2} \xi \tanh \xi \Big\{ \cos(x \cosh\xi) - \frac1{2i} \int_{-i}^i \frac{\e^{-vx \cosh \xi}}{v^2}\dd v\Big\} \dd\xi \\
= &\ -i\sqrt{\pi} \frac{T^3}x \int_0^\infty \e^{-(\xi T)^2} (1 - 2(\xi T)^2) \Big\{ \sinh(x\cosh\xi) + \frac1i \int_{-i}^i \frac{\e^{-xv\cosh\xi}}{v^3}\dd v\Big\} \frac{\dd\xi}{\cosh\xi}.
\end{align*}
From there, it is straightforward to reproduce the computations of~\cite[page~261]{DI} using the bounds of Lemma~\ref{lemme:Ba}.

\end{proof}

\subsubsection{Weighted large sieve inequalities for the exceptional spectrum}

The objects we would like to bound now are of the shape
$$ E_{q, \ca}(Y, (a_n)) := \ssum{f\in \cB(q, \chi)\\t_f\in i\bfR} Y^{2|t_f|} \Big|\sum_{N<n\leq 2N} a_n n^{1/2}\rho_{f\ca}(n)\Big|^2 $$
where~$Y\geq 1$ is to be taken as large as possible while still keeping this quantity comparable to the bounds~$(1+\mu(\ca)N)\sum_n |a_n|^2$ coming from Proposition~\ref{prop:grand-crible}. The following is the analogue of~\cite[Theorem~5]{DI}.
\begin{lemme}\label{lemme:DIth5}
Assume that the situation is as in Proposition~\ref{prop:grand-crible}. Then for any~$Y\geq 1$,
$$ E_{q, \ca}(Y, (a_n)) \ll_\ee \big(1 + (\mu(\ca)NY)^{1/2}\big)\big(1 + (q_0\mu(\ca)N)^{1/2+\ee}\big)\|a_N\|_2^2. $$
\end{lemme}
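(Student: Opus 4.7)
The plan is to adapt the proof of Deshouillers--Iwaniec~\cite[Theorem~5]{DI} to the twisted and general-cusp setting developed in the preceding sections. The amplification factor $Y^{2|t_f|}$ is produced by testing the Kuznetsov formula against a bump function concentrated near the origin: I take $\phi_Y(x) := \phi_0(xY)$, where $\phi_0$ is a fixed smooth non-negative function compactly supported in $[1,2]$, chosen (as in~\cite{DI}) so that $\tphi_Y(it_f) \gg Y^{2|t_f|}$ uniformly for $f$ in the exceptional spectrum. Existence of such a $\phi_0$ follows from the small-argument asymptotic $K_\nu(x) \sim \tfrac12 \Gamma(\nu)(x/2)^{-\nu}$ as $x \to 0^+$, plugged into~\eqref{eq:def-tphi} via the standard relation between $J_{\pm 2it}$ and $K_{2t}$ for imaginary $t$. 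By Lemma~\ref{lemme-estim-Hankel}, on the regular spectrum the same $\phi_Y$ satisfies $|\tphi_Y(t)|, |\dphi_Y(k)| \ll (1+|\log Y|)\min\big(1, (1+|t|)^{-3}\big)$.

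Applying~\eqref{eq:Kuz-plus} with $\cb = \ca$, multiplying by $\bar{a_m} a_n$ and summing over $N < m, n \leq 2N$, positivity of $\tphi_Y(it_f)/\cosh(\pi t_f)$ on the exceptional spectrum (note $\cos(\pi|t_f|) > 0$ for $|t_f| \leq \theta < 1/2$) gives
\[ E_{q, \ca}(Y, (a_n)) \ll |\Sigma_{\text{Klo}}| + |\cH(\phi_Y)| + |\cE(\phi_Y)| + |\cM_{\text{reg}}(\phi_Y)|, \]
where $\Sigma_{\text{Klo}}$ is the Kloosterman side and $\cM_{\text{reg}}$ is the contribution to $\cM(\phi_Y)$ of Maa\ss{} forms with $t_f \in \bfR$. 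For the last three terms, dyadic decomposition in $k$ and $|t_f|$, combined with the bounds on $\tphi_Y, \dphi_Y$ from Lemma~\ref{lemme-estim-Hankel} and with Proposition~\ref{prop:grand-crible}, produces a total of $O_\ee\big((1 + q_0^{1/2} \mu(\ca) N^{1+\ee}) \|a_N\|_2^2\big)$, well within the target. The main work lies with
\[ \Sigma_{\text{Klo}} = \sum_{c \in \cC(\ca, \ca)} \frac1{c} \summ_{N < m, n \leq 2N} \bar{a_m} a_n\, S_{\ca\ca}(m, n; c)\, \phi_Y\Big(\frac{4\pi \sqrt{mn}}{c}\Big). \]
The support of $\phi_Y$ confines the summation to $c \asymp NY$; since $\cC(\ca, \ca) = \mu(\ca)^{-1}\bfN$, there are $\asymp \mu(\ca) NY$ such moduli. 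Fourier-inverting the smooth factor $\phi_Y(4\pi\sqrt{mn}/c)$ separates the variables $m$ and $n$ and reduces the inner sum, for each fixed $c$, to an integral of quadratic forms $B_\ca(0, 2\pi\xi; c, N)$ against $\hat\phi_Y(\xi)$, which has total mass $O(1)$ concentrated at $|\xi| \asymp Y$. Combining the Weil-type bound~\eqref{eq:Ba-q0} with an extension of the Poisson-summation argument underlying~\eqref{eq:Ba-3} to the regime $c \asymp NY > N$ then yields, after summation in $c$, the amplification term $(\mu(\ca)NY)^{1/2}\, (q_0 \mu(\ca) N)^{1/2+\ee}\, \|a_N\|_2^2$.

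The principal obstacle is this final step: bound~\eqref{eq:Ba-3} is stated only for $c < N$, whereas here $c \asymp NY$ may be substantially larger. Revisiting its proof (Cauchy--Schwarz, then completion of the Kloosterman sum modulo~$c$ and Poisson summation in the $n$-variable), one checks that an analogous bound persists for $c > N$ after routine modifications, with the $\chi$-twist contributing the factor $q_0^{1/2}$ through Lemma~\ref{sec:Saa-Weil} and the general-cusp structure entering through $\mu(\ca)$. A secondary delicate point is the positivity of $\tphi_Y$ on the full exceptional range in the twisted setting ($\kappa \in \{0, 1\}$), handled as in~\cite{DI} but requiring care near $t_f = 0$. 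Subject to these technical verifications, assembling the final bound is routine bookkeeping.
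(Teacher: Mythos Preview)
Your proposal is essentially correct and follows the same approach as the paper, which simply states that the arguments of~\cite[Section~8.1, pages~270--271]{DI} transpose identically to the twisted setting. Your sketch --- amplify via a test function~$\phi_Y$ supported near~$1/Y$, use positivity on the exceptional spectrum, control the regular spectrum by Proposition~\ref{prop:grand-crible}, and bound the Kloosterman side through the quadratic forms~$B_\ca$ of Lemma~\ref{lemme:Ba} --- is precisely the Deshouillers--Iwaniec strategy.

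One remark on your discussion of the Kloosterman side: your worry that~\eqref{eq:Ba-3} is restricted to~$c<N$ and~$\vth<2$ is well-founded, but your proposed fix (extend the Poisson argument to~$c\asymp NY$) is more delicate than ``routine modifications'' suggests, since for~$c>N$ the dual sum after Poisson is no longer of bounded length. In~\cite{DI} the Kloosterman contribution is handled by a suitable combination of the three estimates in their Proposition~3 (our Lemma~\ref{lemme:Ba}) rather than by~\eqref{eq:Ba-3} alone; the footnote in the paper about~$L(Y)$ versus~$L(Y^{-1})$ refers to this part of their argument. This does not affect the correctness of your overall plan, only the precise bookkeeping in that step.
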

The important aspect in this bound is that it is as good as those coming from the regular spectrum ({\it i.e.} the upper bound in Proposition~\ref{prop:grand-crible}) in the situation when~$\mu(\ca)=1/q$ (which will typically be the case), $N<q$ and~$Y\leq q/N$. Note also that the previous bound holds for any individual~$q$.
\begin{proof}
The arguments in~\cite[section~8.1, pages~270-271]{DI} transpose identically.\footnote{Note that in the last display of the proof~\cite[page 271]{DI}, $L(Y)$ should read~$L(Y^{-1})$.}.
\end{proof}

The next step is to produce an analogue of~\cite[Theorem~6]{DI}, which is concerned with the situation when an average over~$q$ is done. Deshouillers and Iwaniec make use of the very nice idea that with the choice~$\ca=\infty$ for each~$q$, the roles of~$q$ and~$c$ can be swapped in the Kuznetsov formula. Through an induction process, this enhances significantly the bounds obtained. This switching technique is specific to the choice~$\ca=\infty$ for all~$q$, with scaling matrices independent of~$q$.
\begin{lemme}\label{lemme:DIth6}
Assume the situation is as previously. Recall that~$\chi$ has modulus~$q_0\geq 1$. Then for all~$Y\geq 1$ and~$Q\geq q_0$,
$$ \ssum{q\leq Q \\ q_0|q} E_{q, \infty}(Y, (a_n)) \ll_\ee (QN)^\ee(Qq_0^{-1} + N + NY^{1/2})\|a_N\|_2^2, $$
where the scaling matrices are chosen independently of~$q$.
\end{lemme}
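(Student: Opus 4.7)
The plan is to adapt the proof of~\cite[Theorem~6]{DI} to the present setting of a Dirichlet multiplier $\chi\mod{q_0}$. The argument would proceed in three stages: a reverse application of Kuznetsov's formula at each level $q$, a direct treatment of the leftover regular-spectrum contributions, and the key \emph{switching step} that handles the Kloosterman side after summation over $q$. For the first stage I would fix a non-negative $\phi\in\cC^\infty_c(\bfR_+^*)$ supported at scale $X:=Y^{-1}$ with $\|\phi^{(j)}\|_\infty\ll X^{-j}$, designed so that $\tphi(it)\gg Y^{2t}$ uniformly for $t\in[0,\tfrac12]$---this is the converse of the (essentially sharp) upper bound of Lemma~\ref{lemme-estim-Hankel}. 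Opening the square in $E_{q,\infty}(Y,(a_n))$ and applying the Kuznetsov identity~\eqref{eq:Kuz-plus} with $\ca=\cb=\infty$, the positivity of the weight $\tphi(t_f)/\cosh(\pi t_f)$ on the exceptional spectrum gives
\begin{equation*}
E_{q,\infty}(Y,(a_n))\ \ll\ \Big|\sum_{m,n}\bar a_m a_n\sqrt{mn}\sum_{c\in q\bfN}\frac{S_\chi(m,n;c)}{c}\phi\Big(\frac{4\pi\sqrt{mn}}{c}\Big)\Big|\ +\ \cR_q,
\end{equation*}
where $\cR_q$ collects the (spectrally non-negative) holomorphic, Eisenstein, and regular-Maa{\ss} pieces of the Kuznetsov right-hand side.

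For the second stage I would bound $\sum_{q\le Q,\,q_0\mid q}\cR_q$ termwise by Proposition~\ref{prop:grand-crible} at $\ca=\infty$ (where $\mu(\infty)=1/q$), combined with the rapid decay of $\dphi(k)$ and $\tphi(t)$ on the regular spectrum supplied by Lemma~\ref{lemme-estim-Hankel}. Summing the resulting series in $q$ already delivers a contribution $\ll_\ee(QN)^\ee(Qq_0^{-1}+N)\|a_N\|_2^2$, which falls within the target bound.

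The third and crucial stage treats the Kloosterman piece summed over $q\le Q$ with $q_0\mid q$. Writing $c=qr$ and exchanging the order of summation, it takes the form
\begin{equation*}
\sum_c\frac{S_\chi(m,n;c)}{c}\phi\Big(\frac{4\pi\sqrt{mn}}{c}\Big)\cdot\#\{q\mid c:\ q_0\mid q,\ q\le Q\},
\end{equation*}
a \emph{single} sum of Kloosterman sums weighted by a truncated divisor-type count. I would then re-apply Kuznetsov's formula~\eqref{eq:Kuz-plus} in the forward direction at a suitable level compatible with~$\chi$, so as to convert this back into spectral sums of the same exceptional/regular flavour. These are in turn controlled by Lemma~\ref{lemme:DIth5} and Proposition~\ref{prop:grand-crible}, now with the roles of the parameters essentially transposed, and after a dyadic decomposition in~$c$ (of typical size $\asymp Q\sqrt{N/Y}$) the remaining piece $(QN)^\ee NY^{1/2}\|a_N\|_2^2$ is extracted. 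The main obstacle I foresee is the bookkeeping in this swap: one must verify that the character $\chi\mod{q_0}$ descends cleanly when the level changes, that the compatibility $q_0\mid q\mid c$ is maintained, and that the $q_0^{-1}$ saving appearing in the first term is not lost at the second invocation of Proposition~\ref{prop:grand-crible}. Granted these checks, the argument is a transposition of~\cite[\S8.2]{DI} to the twisted context.
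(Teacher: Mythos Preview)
Your overall architecture (inverse Kuznetsov, large sieve for the regular part, then swap and re-apply Kuznetsov on the Kloosterman side) is indeed that of the paper and of \cite[\S8.2]{DI}. But Stage~3 as you describe it has a genuine gap.

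First a structural point: after inserting a smooth weight~$g(q)$ for the $q$-sum (which you should do, rather than carrying the sharp divisor count~$\#\{q\mid c:q_0\mid q,\ q\le Q\}$, which is not a usable test function for a second Kuznetsov application), the Kloosterman modulus factors as~$q_0qc$ with~$c\geq 1$, and the swap reinterprets the $q$-sum as a Kloosterman sum at the new level~$q_0c$. The co-level~$c$ then ranges over~$C\asymp NY/(q_0Q)$, not~$Q\sqrt{N/Y}$.

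Second, and more importantly: after the swap, the exceptional spectrum does \emph{not} reduce to something Lemma~\ref{lemme:DIth5} can close off in one shot. What one actually obtains is a \emph{recurrence}
\[
S(Q, Y, N, 0) \ \ll\ \int_{\bfR} S\Big(\frac{\pi NY}{q_0 Q}, Y, N, it\Big)\frac{\dd t}{1+t^4} + (QYN)^\ee\Big(Q + \frac{N}{q_0^{1/2}} + \frac{NY}{q_0^{1/2}Q}\Big)\|a_N\|_2^2,
\]
relating~$S$ at scale~$Q$ to~$S$ at scale~$C=\pi NY/(q_0Q)$ with the \emph{same}~$Y$. Iterating the recurrence alone just oscillates between~$Q$ and~$C$; applying Lemma~\ref{lemme:DIth5} termwise after one swap produces a spurious term of size~$(NYQ)^{1/2}/q_0$, which exceeds the target~$NY^{1/2}$ whenever~$Q>Nq_0^2$. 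The paper instead combines the recurrence with the monotonicity $S(Q,Y)\leq (Y/Z)^{1/2}S(Q,Z)$ for $1\leq Z\leq Y$ and the base case $S(Q,1)\ll_\ee (QN)^\ee(Q+Nq_0^{-1/2})\|a_N\|_2^2$ from Proposition~\ref{prop:grand-crible}, and then reproduces the induction of \cite[p.~274]{DI}. Your closing sentence cites the right place, but the sketch you give of Stage~3 omits precisely the iterative step that is the heart of that argument.
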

Note that now, in the situation when~$N\leq Q$, the parameter~$Y$ is allowed to be as large as~$(Q/N)^2$ while still yielding a bound of same quality as the regular spectrum. The final situation is the special case when~$(a_n)$ is the characteristic sequence of an interval of integers. Then Deshouillers and Iwaniec are able to provide an even stronger bound~\cite[Theorem~7]{DI}, by enhancing the initial step in the induction.
\begin{lemme}\label{lemme:DIth7}
Assume that the situation is as in Lemma~\ref{lemme:DIth6}. Assume moreover that~$(a_n)_{N<n\leq 2N}$ is the characteristic sequence of an interval of integers. Then
$$ \ssum{q\leq Q \\ q_0|q} E_{q, \infty}(Y, (a_n)) \ll_\ee (QN)^\ee(Qq_0^{-1} + N + (NY)^{1/2})N. $$
\end{lemme}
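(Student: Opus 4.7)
The plan is to follow~\cite[Theorem~7]{DI}, adapting the switching argument from the untwisted case to the Dirichlet multiplier setting, as was done for Lemmas~\ref{lemme:DIth5} and~\ref{lemme:DIth6}. The key point is that Lemma~\ref{lemme:DIth6} is established by an induction whose initial step can be strengthened when $(a_n)$ is the characteristic function of an interval, and this strengthening is exactly what converts the $NY^{1/2}$ term into $(NY)^{1/2}$.

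First I would recall the switching mechanism underlying Lemma~\ref{lemme:DIth6}: starting from the pre-Kuznetsov formula~\eqref{eq:PreKuz} at $\ca=\cb=\infty$ and using the decay $\tphi(it)\ll X^{-2|t|}$ of Lemma~\ref{lemme-estim-Hankel} on $[-i/4,i/4]$, one expresses $E_{q,\infty}(Y,(a_n))$ as a weighted sum of Kloosterman sums $S_{\chi}(m,\pm n;c)$ over $c\in q\bfN$. Because the condition $q\mid c$ is symmetric between $q$ and $c/q$, the outer sum $\sum_{q\leq Q}$ can be re-interpreted as a sum over a new modulus to which the large sieve of Lemma~\ref{lemme:DIth5} (or Lemma~\ref{lemme:DIth6} itself at a smaller scale) is re-applied. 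This bootstrap produces the term $NY^{1/2}\|a_N\|_2^2$.

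To replace $NY^{1/2}$ by $(NY)^{1/2}$ when $(a_n)$ is the indicator of an interval $I\subset(N,2N]$, I would enhance the base case. Opening the Kloosterman sums leads to inner exponential sums of the shape $\sum_{n\in I}\e(dn/c)$, which are geometric and therefore admit the nontrivial bound $\min(|I|,\|d/c\|^{-1})$ rather than the trivial bound $|I|$ used for general coefficients. Summing this improved bound over $d\mod{c}^\times$ saves a factor of $N^{1/2}$ in the relevant bilinear form; this is the same gain that, in the final balance of the induction, reduces $NY^{1/2}\cdot\|a_N\|_2^2$ to $(NY)^{1/2}\cdot N$, the two diagonal terms $Qq_0^{-1}+N$ being unaffected since they already come from the identity contribution and from the regular spectrum.

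The main obstacle I anticipate is verifying compatibility of the enhanced base step with the Dirichlet multiplier: the extra factor $\bar\chi(d)$ in $S_\chi(m,n;c)$ must not spoil the geometric structure of the $n$-sum. As in the proof of the third bound of Lemma~\ref{lemme:Ba}, this is handled by splitting according to whether certain residues modulo $c$ obtained after Cauchy--Schwarz coincide or not: in the coincident case the $\chi$-factors cancel in conjugate pairs, while in the non-coincident case they are bounded trivially by the triangle inequality. Once this is in place, the chain of estimates of~\cite[pp.~271--274]{DI} transposes essentially verbatim, and the factor $q_0^{-1}$ in $Qq_0^{-1}$ appears because the outer sum is restricted to $q_0\mid q$.
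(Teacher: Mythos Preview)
Your overall plan is correct: one enhances the initial step of the induction from Lemma~\ref{lemme:DIth6} by exploiting that the $n$-sum is geometric, and then the recursive scheme of~\cite[p.~277]{DI} converts the gain into the improvement $NY^{1/2}\to (NY)^{1/2}$. The paper follows exactly this route.

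However, your proposed treatment of the character~$\chi$ is not the one used, and it is not clear your route would actually go through. You suggest handling the factor~$\bar\chi(d)$ by a Cauchy--Schwarz step followed by the coincident/non-coincident residue analysis of Lemma~\ref{lemme:Ba}. That argument is designed for the quadratic forms~$B_\ca(\lambda,\vth;c,N)$, which are genuinely bilinear in the coefficients~$b_m,\bar b_n$; here, by contrast, one is bounding
\[
\Big|\sum_{N<m,n\leq N_1}\phi\Big(\frac{4\pi\sqrt{mn}}{k}\Big)S_\chi(m,n;k)\Big|
\]
for the indicator sequence on both variables, and there is no natural place to insert Cauchy--Schwarz that reproduces the structure of~\eqref{eq:Ba-apres-cauchy}.

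The paper's device is simpler. After opening~$S_\chi$ one bounds by the triangle inequality
\[
U_1(k,M',N')\ \leq\ U_2(k,M',N'):=\ssum{\delta\mod{k}^\times}\Big|\sum_{m\leq M'}\e\Big(\frac{\delta m}{k}+m\vth\Big)\Big|\,\Big|\sum_{n\leq N'}\e\Big(\frac{\bar\delta n}{k}-n\vth\Big)\Big|,
\]
so that~$\chi$ disappears entirely. The key observation is that~$U_2$ is defined for \emph{all}~$k$, not only~$k\equiv 0\mod{q_0}$; since the terms are non-negative, one may drop the constraint~$q_0\mid k$ and then invoke the untwisted~\cite[Theorem~14]{DI} to bound~$\sum_{k\leq K}U_2(k,M',N')\ll_\ee (KMN)^\ee K(K+MN)$. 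This gives the enhanced initial bound~$S(Q,N,Y,0)\ll_\ee (NY)^\ee(Q+N+Y)N$, after which the induction of~\cite[p.~277]{DI} runs verbatim. No Cauchy--Schwarz or residue coincidence analysis is needed at this stage.
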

In the situation when~$N\leq Q$, the parameter~$Y$ can then be taken as large as~$Q^2/N$ while still yielding an acceptable bound.

We now proceed to justify Lemmas~\ref{lemme:DIth6} and~\ref{lemme:DIth7}. For the rest of this section, we rename~$q$ into~$q_0q$, so that now~$q$ runs over intervals. The object of interest is
\[ S(Q, Y, N, s) := \ssum{Q<q\leq 16Q} \ssum{f\in\cB(q_0q, \chi)\\t_f \in i\bfR} Y^{2|t_f|}\Big|\sum_{N<n\leq2N}a_n n^{s+1/2} \rho_{f\infty}(n)\Big|^2. \]
\begin{lemme}
Let~$N, Y, Q\geq 1$ and a sequence~$(a_n)$ be given. Then
\begin{equation}
\begin{aligned}
S(Q, Y, N, 0) \ll_\ee \int_{-\infty}^\infty &S\big(\frac{\pi NY}{q_0Q}, Y, N, it\big)\frac{\dd t}{t^4+1} \\
&+ (QYN)^\ee\big(Q + \frac{N}{q_0^{1/2}} + \frac{NY}{q_0^{1/2}Q}\big)\|a_N\|_2^2.\label{eq:SQNY-rec}
\end{aligned}
\end{equation}
Moreover, if~$(a_n)$ is the characteristic sequence of an interval, then
\begin{equation}
S(Q, Y, N, 0) \ll_\ee (NY)^\ee (Q + N + Y)N\label{eq:SQNY-lisse}
\end{equation}
\end{lemme}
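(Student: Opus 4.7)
The plan is to transpose the inductive arguments of Theorems~6 and~7 in~\cite{DI} to the present twisted setting. For the recursive bound~\eqref{eq:SQNY-rec}, I would start from the pre-Kuznetsov formula~\eqref{eq:PreKuz} with $\ca = \cb = \infty$. Multiplying both sides by $\bar{a_m}a_n (mn)^{1/2}$ and summing over $N<m,n\leq 2N$, the spectral side produces the desired quantity if one chooses the free parameter $r$ so that the kernel $H(t_f, r)$ effectively picks out the weight $Y^{2|t_f|}$ on the exceptional spectrum, while remaining controlled on the regular and Eisenstein spectra. In practice $r$ is taken imaginary and integrated against a smooth test function with decay $1/(t^4+1)$, which accounts for the integral on the right-hand side of~\eqref{eq:SQNY-rec}. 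The contributions from the holomorphic, regular Maa{\ss}, and Eisenstein spectra are bounded via Proposition~\ref{prop:grand-crible} with $\mu(\infty) = 1/(q_0 q)$ and then summed trivially over $Q < q \leq 16Q$, producing the explicit error $Q + N q_0^{-1/2} + NY/(q_0^{1/2}Q)$ up to $\ee$-losses from the large sieve.

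The core step is Deshouillers--Iwaniec's \emph{swap}: the geometric side of~\eqref{eq:PreKuz} has the shape $\sum_q \sum_{c \in q_0 q \bfN}c^{-1} S_\chi(m, n;c) I_+(4\pi\sqrt{mn}/c)$, and writing $c = q_0 q c'$ converts this to a double sum over $(q, c')$ symmetric in the two dyadic parameters. Since $\chi$ factors through $(\bfZ/q_0)^\times$ with $q_0$ fixed, the factor $\bar\chi(d)$ inside $S_\chi$ is unaffected by interchanging $q$ and $c'$. Re-applying the pre-Kuznetsov formula~\eqref{eq:PreKuz} in the reverse direction, now to the sum over the new modulus $c'$, converts the geometric sum back into a spectral sum at level $q_0 c'$, with $c' \asymp NY/(q_0 Q)$ since $I_+(4\pi\sqrt{mn}/c)$ is meaningfully supported on $c \asymp NY$ for the exceptional spectrum. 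This yields the term $S(\pi NY/(q_0 Q), Y, N, it)$ on the right-hand side, integrated against $(t^4+1)^{-1}\dd t$ from the averaging over $r$.

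For the improved bound~\eqref{eq:SQNY-lisse}, the fact that $(a_n)$ is the characteristic sequence of an interval is exploited at the base case rather than iterating. After dualization, the inner sum over $n$ is a smoothly weighted incomplete Kloosterman sum, which can be completed and bounded directly using Weil's estimate (Lemma~\ref{sec:Saa-Weil}) together with Lemma~\ref{lemme:Ba}, avoiding the second Cauchy--Schwarz loss incurred for general sequences; this is the analogue of the initial-step refinement in~\cite[Theorem~7]{DI}. The main obstacle in both parts will be checking that the character twist is preserved through the swap. In~\cite{DI} the Kloosterman sum depends only on $c$, whereas here $S_\chi(m,n;c)$ carries the nontrivial factor $\bar\chi(d)$. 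Since $\chi$ has fixed modulus $q_0$ and the summation variable $d$ is coprime to $c = q_0 q c'$, this factor is invariant under the swap, and the Kuznetsov-theoretic bookkeeping of~\cite[Sections~8.2--8.3]{DI} can then be carried out \emph{mutatis mutandis}, with each appeal to \cite[Theorem~5]{DI} replaced by Lemma~\ref{lemme:DIth5}.
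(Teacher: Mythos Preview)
Your high-level strategy is right --- this is the Deshouillers--Iwaniec swap trick, adapted to carry the character~$\chi$ of fixed modulus~$q_0$ --- but several of the technical choices you describe are not what the paper does, and a couple would not work as stated.

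For~\eqref{eq:SQNY-rec}, the paper does \emph{not} start from the pre-Kuznetsov formula~\eqref{eq:PreKuz} with imaginary~$r$. It fixes a smooth test function~$\phi(x)=\Phi(Yx)$ whose Bessel transform~$\tphi(t_f)$ majorizes~$Y^{2|t_f|}$ on the exceptional spectrum (this is~\cite[formula~(8.1)]{DI}), so that~$S(Q,Y,N,0)\ll|\cS_1|$ where~$\cS_1$ is a genuine spectral sum. Then the \emph{full} Kuznetsov formula (Lemma~\ref{lemme-Kuz}) is applied to convert~$\cS_1$ into a sum of Kloosterman sums~$\cS_2$ over moduli~$q_0qc$; the regular-spectrum error here is handled by Proposition~\ref{prop:grand-crible}. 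After the swap~$q\leftrightarrow c$, Kuznetsov is applied a second time (now at level~$q_0c$) to go back to the spectral side~$\cS_3$. The factor~$(t^4+1)^{-1}$ does \emph{not} come from an average over a spectral parameter~$r$: it is the decay of the Mellin transform~$\breve g(i\tau)$ of the smooth~$q$-cutoff~$g(q)=\Phi(q/Q)$, which is introduced precisely to separate the variables~$m,n,c$ inside the transformed test function~$h_{m,n,c}$. Your proposal to take~$r$ imaginary in~\eqref{eq:PreKuz} would require an analytic continuation that is not available without further work, and in any case is not the mechanism used here or in~\cite{DI}.

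For~\eqref{eq:SQNY-lisse}, the paper's key move is different from what you outline. After Kuznetsov and the large sieve one is left with~$\sum_k |\sum_{m,n}\phi(\cdot)S_\chi(m,n;k)|$ with~$q_0\mid k$. One opens~$S_\chi$ and bounds the character trivially, obtaining a quantity~$U_2(k,M',N')$ that makes sense for \emph{all}~$k$ (not only multiples of~$q_0$); this is then controlled by the Fourier completion and the untwisted estimate of~\cite[Theorem~14]{DI}. Neither the Weil bound (Lemma~\ref{sec:Saa-Weil}) nor Lemma~\ref{lemme:Ba} enters here; those tools concern the diagonal quadratic forms~$B_\ca$ and are used in the proof of Proposition~\ref{prop:grand-crible}, not in this argument. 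Your observation that the swap is legitimate because~$\chi$ depends only on residues mod~$q_0$ is correct and is indeed the reason the~\cite{DI} argument transposes.
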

\begin{proof}[of~\eqref{eq:SQNY-rec}]
The arguments in~\cite[pages~272-273]{DI} are adapted with minimal effort; however we take the opportunity to justify more precisely one of the claims made there. Fix a smooth function~$\Phi:\bfR\to[0, 1]$ supported inside~$[1/2, 5/2]$ and majorizing~$\bfUn_{[1, 2]}$. Letting~$g(q)=\Phi(q/Q)$ and~$\phi(x)=\Phi(Yx)$ (these kind of homotheties of~$\Phi$ we refer to as \emph{test functions}) we have
$$ S(Q, Y, N, 0) \ll |\cS_1|, $$
$$ \cS_1 := \sum_{q\geq 1}g(q)\ssum{f\in \cB(q_0q, \chi)\\t_f\in i\bfR} \frac{\tphi(t_f)}{\cosh(\pi t_f)}\Big|\sum_n a_n n^{1/2} \rho_{f\infty}(n)\Big|^2. $$
This is seen by approximating the Bessel function in the definition of~$\tphi$ by its first order term, as in~\cite[formula~(8.1)]{DI}. Opening the squares in~$\cS_1$ and applying the Kuznetsov formula and the large sieve estimates (Lemma~\ref{lemme-Kuz} and Proposition~\ref{prop:grand-crible}), one gets
$$ \cS_1 = \sum_{m, n}\bar{a_m}a_n\cS_2(m, n) + O_\ee\big((QNY)^\ee \big(Q + \frac{N}{q_0^{1/2}}\big)\sum_n|a_n|^2\big), $$
$$ \cS_2(m, n) := \sum_{q, c\geq 1}\frac{g(q)}{q_0qc}\phi\big(\frac{4\pi\sqrt{mn}}{q_0qc}\big)S_{\infty\infty}(m, n ; qc), $$
Letting~$h(x) = h_{m, n, c}(x) = \phi(x) g\big(\frac{4\pi\sqrt{mn}}{q_0cx}\big)$, one applies the Kuznetsov formula for the group~$\Gamma_0(q_0c)$ (which requires that the scaling matrices be independent of~$q$) and obtains
$$ \cS_1 \ll |\cS_3| + O_\ee\big((QNY)^\ee \big(Q + \frac{N}{q_0^{1/2}} + \frac{NY}{q_0^{1/2}Q}\big)\sum_n|a_n|^2\big), $$
$$ \cS_3 := \sum_{m, n}\bar{a_m}a_n\sum_{C < c\leq 16 C}\ssum{f\in \cB(q_0c, \chi)\\t_f\in i\bfR}\frac{{\tilde h}(t_f)}{\cosh(\pi t_f)}\sqrt{mn}\bar{\rho_{f\infty}(m)}\rho_{f\infty}(n) .$$
Note that~$h(t_f) = h_{m, n, c}(t_f) = 0$ unless~$C<c\leq 16C$, where~$C = \pi NY / (q_0 Q)$. Let
$$ \cK_{\kappa, t}(x) := \frac{2\pi i t^\kappa}{\sinh(\pi t)}\big(J_{2it}(x) - (-1)^\kappa J_{-2it}(x)\big), $$
and~${\breve g}(s) := \int_0^\infty g(x) x^{s-1}\dd x$ be the Mellin transform of~$g$. Then
$$ {\tilde h}(t) = \frac1{2\pi}\int_{-\infty}^\infty {\breve g}(i\tau)\big(\frac{q_0c}{4\pi\sqrt{mn}}\big)^{i\tau}\int_0^\infty \cK_{\kappa, t}(x) x^{i\tau}\phi(x) \dd x\dd \tau .$$
Inserting into the definition of~$\cS_3$ and using the triangle inequality, we obtain
\begin{align*}
\cS_3 \ll \int_{-\infty}^\infty |{\breve g}(i\tau)| \sum_{C<c\leq 16C}\ssum{f\in \cB(q_0c, \chi)\\t_f\in i\bfR} &
\Big|\sum_m a_m m^{(1+i\tau)/2}\rho_{f\infty}(m)\Big|
\Big|\sum_n a_n n^{(1-i\tau)/2}\rho_{f\infty}(m)\Big| \times \\
&\times \Big|\int_0^\infty \cK_{\kappa, t}(x) x^{i\tau}\phi(x)\dd x\Big|\dd \tau.
\end{align*}
From there, the arguments in~\cite[page~273]{DI} apply and yield
$$ \Big|\int_0^\infty \cK_{\kappa, t}(x) x^{i\tau}\phi(x)\dd x\Big| \ll_\ee Y^{2|t_f|} + Y^{\ee} $$
from which the claimed bound follows in the same way as~\cite[page~273]{DI}.
\end{proof}
\begin{proof}[of~\eqref{eq:SQNY-lisse}]
Assume that~$(a_n)_{N<n\leq 2N}$ is the characteristic sequence of the integers inside~$(N, N_1]$ for some~$N_1\leq 2N$. We proceed as in~\cite[page~276]{DI}. By applying the Kuznetsov formula and the large sieve inequalities, one obtains
\begin{align*}
S(Q, N, Y, 0) \ll_\ee \sum_{Q<q\leq 16Q}\sum_{c\geq 1} & \frac1{q_0qc}\Big|\sum_{N\leq m, n \leq
N_1}\phi\big(\frac{4\pi\sqrt{mn}}{q_0qc}\big)S_{\infty\infty}(m, n ; qq_0c)\Big| \\
& + \big(Q+\frac{N^{1+\ee}}{q_0^{1/2}}\big)N
\end{align*}
for a test function~$\phi$ supported inside~$[1/(2Y), 5/(2Y)]$. Here one may restrict summation to~$C/4<c\leq 8C$ for~$C := \pi NY/(q_0Q)$. Let~$k := q_0qc$. The first term above is majorized by
$$ T := (q_0QC)^{-1+\ee} \ssum{k \asymp q_0QC\\q_0|k} \Big|\sum_{N<m, n\leq N_1}\phi\big(\frac{4\pi\sqrt{mn}}{k}\big) S_{\infty\infty}(m, n ; k) \Big|. $$
Let~$\phi(x) = \tfrac1{2\pi}\int_{-\infty}^\infty{\breve \phi}(it)x^{-it}\dd t$, where the Mellin transform~${\breve \phi}(s) = \int_0^\infty \phi(x) x^{s-1}\dd x$ satisfies~${\breve\phi}(it) \ll (1+t^4)^{-1}$, so that (after reinterpreting~$t$ by~$2t$)
$$ T \ll (q_0QC)^{-1+\ee} \int_{-\infty}^\infty \frac1{t^4+1} \ssum{k \asymp q_0QC\\q_0|k} \Big|\sum_{N<m, n\leq N_1}(mn)^{-it} \e((m-n)\vth)S_{\chi}(m, n ; k) \Big| \dd t $$
for some~$\vth\in[0, 1)$ (depending on the scaling matrix). By~$m^{-it} = N_1^{-it}+it\int_m^{N_1}u^{-it-1}\dd u$, we obtain
$$ T \ll (q_0QC)^{-1+\ee} \sup_{N\leq N', M' \leq N_1} \ssum{k \asymp q_0QC\\q_0|k} U_1(k, M', N'), $$
$$ U_1(M', N') :=  \Big|\ssum{m\leq M' \\ n\leq N'} \e((m-n)\vth) S_{\chi}(m, n ; k) \Big| .$$
Opening the summation in~$S_\chi$, we have
$$ U_1(k, M', N') \leq U_2(k, M', N') := \ssum{\delta\mod{k}^\times} \Big| \sum_{m\leq M'} \e\Big(\frac{\delta m}{k}+m\vth\Big)\Big| \Big|\sum_{n\leq N'}\e\Big(\frac{\bar{\delta}n}{k}-n\vth\Big)\Big| .$$
It is crucial to note that the quantity on the RHS also exists for~$k$ not multiple of~$q_0$, so trivially
$$ T \ll (q_0QC)^{-1+\ee} \sup_{N\leq M', N' \leq N_1} \ssum{k \asymp q_0QC} U_2(k, M', N'), $$
From there on, the calculations in~\cite[page~276]{DI} apply and yield, in the notation of~\cite[Lemma~8.2]{DI},
$$ U_2(k, M', N') \ll \sum_{m, n\in \bfZ} {\hat f}_{M'}(m)\e(m\vth) {\hat f}_{N'}(n)\e(-n\vth) S(m, n ; k). $$
The proof of Theorem~14 of~\cite{DI} follows through, and yields for all~$K\geq 1$,
$$ \sum_{k\leq K} U_2(k, M', N')  \ll_\ee (KMN)^\ee K(K+MN). $$
Taking~$K\asymp q_0QC$, we conclude that
$$ T \ll_\ee (q_0QC)^{\ee} (q_0QC + N^2). $$
The rest of the arguments in~\cite[page~277]{DI} applies and yields
$$ S(Q, N, Y, 0) \ll_\ee (NY)^\ee (Q + N + Y)N $$
as claimed.
\end{proof}

\begin{proof}[of Lemmas~\ref{lemme:DIth6} and~\ref{lemme:DIth7}]
In addition to the recurrence relation~\eqref{eq:SQNY-rec}, we have the properties
\begin{align*}
S(Q, Y, N, 0) \leq&\ (Y/Z)^{1/2} S(Q, Z, N, 0) \qquad (1\leq Z\leq Y), \\
S(Q, 1, N, 0) \ll_\ee &\ (QN)^\ee \big(Q + \frac{N}{q_0^{1/2}}\big)\|a_N\|_2^2.
\end{align*}
The second one follows from Proposition~\ref{prop:grand-crible}. Having these at hand, the induction arguments in~\cite[page 274]{DI} and~\cite[page 277]{DI} are easily reproduced. It is useful to notice that~$q_0$ appears only with negative powers in the error terms, and that its presence in the denominator of~$\pi NY/(q_0Q)$ in~\eqref{eq:SQNY-rec} is beneficial for the induction.
\end{proof}

\begin{remarque}
The previous three lemmas used only Selberg's theorem that~$\theta\leq 1/4$ (recall the definition~\eqref{eq:def-theta}). One could make the bounds explicit in terms of~$\theta$ and thus benefit from recent progress towards the Ramanujan-Selberg conjecture. It is straightforward to check that Lemmas~\ref{lemme:DIth5},~\ref{lemme:DIth6} and~\ref{lemme:DIth7} hold with the right-hand sides replaced by
$$ (1 + (\mu(\ca)NY)^{2\theta})(1 + q_0^{1/2}(\mu(\ca)N)^{1-2\theta+\ee})\|a_N\|_2^2, $$
$$ (QN)^\ee (Qq_0^{-1} + N + Y^{2\theta}N^{4\theta}Q^{1-4\theta})\|a_N\|_2^2, $$
$$ (QN)^\ee (Qq_0^{-1} + N + Y^{2\theta}N^{2\theta}Q^{1-4\theta})N $$
respectively (compare with~\cite[Proposition~16.10]{IK}). We refrain from doing so because it would not impact the applications considered here.
\end{remarque}

\subsection{Proof of Theorem~\ref{thm:quintilin}}
\subsubsection{Estimates for sums of generalized Kloosterman sums}

We begin by the following statement regarding the generalized Kloosterman sums~$S_{\ca, \cb}(m, n ; c)$. For the sake of simplifying the presentation of the bound obtained, we discard powers of the modulus~$q$. This does not have consequences on our applications.

\begin{prop}\label{prop:estim-genkloo}
Let the real numbers~$M, N, R, S\geq 1$, $X>0$ and the integer~$q\geq 1$ be given, let~$\chi$ be a character modulo~$q$, let~$\phi$ be a smooth function supported on the interval~$[X, 2X]$ such that~$\|\phi^{(j)}\|_\infty\ll X^{-j}$ for~$0\leq j \leq 4$, and let~$(a_m)$ and~$(b_{n,r,s})$ be sequences of complex numbers supported on~$M<m\leq 2M$, $N<n\leq 2N$, $R<r\leq 2R$ and~$S<s\leq 2S$. Assume that~$(a_m)$ is the characteristic sequence of an interval of integers. Then
\begin{equation}\begin{aligned}
 \ssum{m, n, r, s \\ (s, rq)=1} a_m& b_{n,r,s} \sum_{c\in\cC(\infty, 1/s)} \frac1c \phi\Big(\frac{4\pi\sqrt{mn}}{c}\Big)S_{\infty, 1/s}(m, \pm n ; c) \\ \ll_\ee &\ (q(X+X^{-1})RSMN)^\ee\big\{L_{\reg} + L_{\exc}\big\},  \label{eq:somme-genkloo-bnrs}
\end{aligned}
\end{equation}
$$ L_{\reg} := \big( 1+ X + \sqrt{\frac{N}{RS}}\big)\big( 1 + X + \sqrt{\frac{M}{RS}}\big)\frac{\sqrt{RS}}{1+X}\sqrt{M} \|b_{N,R,S}\|_2, $$
$$ L_{\exc} := \big(1 + \sqrt{\frac{N}{RS}}\big) \sqrt{\frac{1+X^{-1}}{RS}}\big(\frac{MN}{RS+N}\big)^{1/4}\frac{\sqrt{RS}}{1+X}\sqrt{M}\|b_{N,R,S}\|_2. $$
where the Kloosterman sum is defined with respect to the congruence group~$\Gamma(qrs)$ with multiplier induced by~$\chi$, with scaling matrices~$\sigma_\infty$ and~$\sigma_{1/s}$ that are both independent of~$m$ and~$n$, with~$\sigma_\infty$ independent of~$r$ and~$s$ as well.
\end{prop}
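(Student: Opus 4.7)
The approach follows the lines of Theorem~9 of~\cite{DI}, adapted to the twisted setting with Dirichlet multiplier~$\chi$ and the cusp pair~$(\infty, 1/s)$. For each fixed~$(r, s)$ with~$(s, rq)=1$, I would first apply the Kuznetsov formula (Lemma~\ref{lemme-Kuz}) on the group~$\Gamma_0(qrs)$ equipped with the multiplier induced by~$\chi$, for the two singular cusps~$\infty$ and~$1/s$ (cf.\ Lemma~\ref{lemme:kloo-chi}). This converts the inner sum~$\sum_c c^{-1}\phi(4\pi\sqrt{mn}/c)S_{\infty, 1/s}(m, \pm n; c)$ into three spectral pieces~$\cH$, $\cE$, $\cM$ (or~$\cE'$, $\cM'$ in the minus-sign case) involving the Fourier coefficients~$\overline{\rho_{f\infty}(m)}\rho_{f, 1/s}(\pm n)$ and the integral transforms~$\dphi$, $\tphi$, $\cphi$. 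By Lemma~\ref{lemme-estim-Hankel}, the regular spectrum is effectively truncated to~$|t_f| \ll 1 + X^\ee$, while the exceptional spectrum carries a weight~$\asymp X^{-2|t_f|}/(1+X)$, equivalent to the parameter~$Y \asymp (1+X)^{-2}$ in the notation of Section~\ref{sec:DI-gc}.

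To handle the four-fold outer summation, I would then apply Cauchy--Schwarz in the spectral variable~$f$ and in~$(r,s)$, separating the $m$-variable from~$(n, r, s)$:
$$ \Bigl|\sum_{r,s}\sum_f \tphi(t_f) X_f^{(r,s)} Y_f^{(r,s)}\Bigr|^2 \leq \Bigl(\sum_{r,s}\sum_f |\tphi(t_f)||X_f^{(r,s)}|^2\Bigr)\Bigl(\sum_{r,s}\sum_f |\tphi(t_f)||Y_f^{(r,s)}|^2\Bigr), $$
with~$X_f^{(r,s)} = \sum_m a_m\sqrt{m}\,\overline{\rho_{f\infty}(m)}$ and~$Y_f^{(r,s)} = \sum_n b_{n,r,s}\sqrt{n}\,\rho_{f, 1/s}(\pm n)$, and analogous expressions on the holomorphic and Eisenstein parts. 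The $X$-factor is then bounded by Proposition~\ref{prop:grand-crible} at the cusp~$\infty$ for the regular spectrum, and \emph{crucially} by Lemma~\ref{lemme:DIth7} for the exceptional spectrum, the sharper estimate being available precisely because~$(a_m)$ is the characteristic sequence of an interval. The $Y$-factor is bounded by opening the squares and applying Proposition~\ref{prop:grand-crible} at the cusp~$1/s$ (where~$\mu(1/s) = 1/(qr)$) for the regular spectrum, together with Lemma~\ref{lemme:DIth5} for the exceptional spectrum, before summing trivially over~$r$ and~$s$.

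Substituting the effective spectral truncation~$T \asymp 1+X$ and weight~$Y \asymp (1+X)^{-2}$, the regular contributions combine into~$L_\reg$, while the exceptional contributions produce~$L_\exc$. The main obstacle is the bookkeeping of parameters: the quartic factor~$(MN/(RS+N))^{1/4}$ in~$L_\exc$ arises from the tight balancing of the stronger exceptional bound of Lemma~\ref{lemme:DIth7} on the $m$-side (where~$Y$ is allowed up to roughly~$(Q/N)^2$) against the weaker bound of Lemma~\ref{lemme:DIth5} on the~$(n,r,s)$-side; and the fact that the level~$\Gamma_0(qrs)$ varies with~$(r,s)$ forces the use of the level-averaged form of Lemma~\ref{lemme:DIth7}, which is exactly where the switching technique of Deshouillers--Iwaniec is used.
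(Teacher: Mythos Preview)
Your proposal is correct and follows the same route as the paper, which simply cites Section~9.1 of~\cite{DI} together with Proposition~\ref{prop:grand-crible} and Lemmas~\ref{lemme:DIth5} and~\ref{lemme:DIth7}. Two small slips to fix: for the group~$\Gamma_0(qrs)$ one has~$\mu(1/s)=(s,qr)/(qrs)=1/(qrs)$, not~$1/(qr)$; and the exceptional weight parameter should be~$Y\asymp 1+X^{-1}$ (so that~$Y^{2|t_f|}$ dominates the bound~$(1+X^{-2|t_f|})/(1+X)$ from Lemma~\ref{lemme-estim-Hankel}), not~$(1+X)^{-2}$.
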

\begin{remarque}
If~$(a_m)$ is not the characteristic sequence of an interval, then the bound~\eqref{eq:somme-genkloo-bnrs} still holds with~$L_\exc$ replaced by~$M^{1/4}L_\exc$ (see~\cite[Theorems~10 and~11]{DI}).
\end{remarque}
\begin{proof}
This estimate is deduced from Proposition~\ref{prop:grand-crible} and Lemmas~\ref{lemme:DIth5} and~\ref{lemme:DIth7} by following the computations of Section~9.1 of~\cite{DI}. It is useful to notice that the bounds of Lemmas~\ref{lemme:DIth5},~\ref{lemme:DIth7} and Proposition~\ref{prop:grand-crible} (for~$\ca \in\{\infty, 1/s\}$) decrease with~$q_0$.

\end{proof}

\subsubsection{Estimates for the complete Kloosterman sums twisted by a character}

We now justify the transition from Proposition~\ref{prop:estim-genkloo} to an estimate for twisted sums of usual Kloosterman sums~$S(m, n ; c)$.
\begin{prop}\label{prop:bd-twisted}
Let the real numbers~$M, N, R, S, C\geq 1$, and the integer~$q\geq 1$ be given, let~$\chi$ be a character modulo~$q$, let~$g$ be a smooth function supported on~$[C, 2C]\times[M, 2M]\times (\bfR_+^*)^3$ such that
\begin{equation}
\frac{\partial^{\nu_0+\nu_1+\nu_2+\nu_3+\nu_4} g}{\partial c^{\nu_0} \partial m^{\nu_1} \partial n^{\nu_2} \partial r^{\nu_3} \partial s^{\nu_4}}(c, m, n, r, s) \ll C^{-\nu_0}M^{-\nu_1}N^{-\nu_2}R^{-\nu_3}S^{-\nu_4} \label{eq:hypo-der-g}
\end{equation}
for~$0\leq \nu_j\leq 12$. Let~$(b_{n,r,s})$ be a sequence of complex numbers supported on~$N<n\leq 2N$, $R<r\leq 2R$ and~$S<s\leq 2S$. Then uniformly in~$t \in[0, 1)$,
\begin{equation}\begin{aligned}
\ssum{c, m, n, r, s \\ (sc, rq)=1} b_{n,r,s} &\ \bar{\chi}(c)g(c, m, n, r, s)\e(mt)S(n\bar{r}, \pm m\bar{q} ; sc) \\
&\ll_\ee (CRSMNq)^\ee q^{3/2} \big\{K_{\reg} + K_{\exc}\big\}\sqrt{M}\|b_{N,R,S}\|_2 ,  \label{eq:somme-complete-bnrs}
\end{aligned}
\end{equation}
\begin{align*}
&K_\reg^2 := RS\frac{(C^2S^2R + MN + C^2SN)(C^2S^2R + MN + C^2SM)}{C^2S^2R + MN}, \\
&K_\exc^2 := C^3S^2\sqrt{R(N+ RS)}.
\end{align*}
\end{prop}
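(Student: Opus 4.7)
The plan is to reduce Proposition~\ref{prop:bd-twisted} to Proposition~\ref{prop:estim-genkloo} by recognising $\bar\chi(c)\,S(n\bar r,\pm m\bar q;sc)$ as a generalised Kloosterman sum at the cusp $1/s$ for $\Gamma_0(qrs)$ with multiplier induced by $\chi$, after absorbing the resulting phases, and then separating the variables in the smooth weight $g$ to match the shape required in Proposition~\ref{prop:estim-genkloo}.

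First, using the symmetry $S(a,b;c)=S(b,a;c)$ and the change of variable $d\mapsto rd$ in the definition of $S(\cdot,\cdot;sc)$ (legitimate since $(r,sc)=1$), one obtains the identity $S(n\bar r,\pm m\bar q;sc)=S(\pm m\,\overline{qr},n;sc)$. Since $(qr,s)=1$ and the conductor $q$ of $\chi$ divides $qr$, Lemma~\ref{lemme:kloo-chi} applied with the factorisation $qrs=(qr)\cdot s$ gives, for $(c,qr)=1$,
\[ \bar\chi(c)\,S(n\bar r,\pm m\bar q;sc) = \e\!\Big({-}\frac{n\bar s}{qr}\Big)\,S_{\infty,\,1/s}\bigl(\pm m,n;\,cs\sqrt{qr}\bigr), \]
the right-hand sum being the generalised Kloosterman sum attached to $\Gamma_0(qrs)$ and $\chi$. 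Combined with the factor $\e(mt)$ already present, the full phase $\e(mt-n\bar s/(qr))$ factors as an $m$-dependent piece (with $t$ fixed) and an $(n,r,s)$-dependent piece. The $(n,r,s)$-piece is absorbed into the sequence by setting $b'_{n,r,s}:=b_{n,r,s}\,\e(-n\bar s/(qr))$, which preserves the $L^2$-norm; the $m$-piece is absorbed into $\sigma_\infty$, using the phase-change flexibility for scaling matrices recalled after equation~\eqref{eq:scaling-matrix}, with the choice depending on $t$ alone (hence $\sigma_\infty$ remains independent of $m,n,r,s$, as required by Proposition~\ref{prop:estim-genkloo}).

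It remains to match the one-variable weight $\tfrac{1}{c^\sharp}\phi(4\pi\sqrt{mn}/c^\sharp)$, with $c^\sharp=cs\sqrt{qr}$, from Proposition~\ref{prop:estim-genkloo}. I would multiply $g$ by a bump function $\phi$ of $4\pi\sqrt{mn}/c^\sharp$ equal to $1$ on the support of $g$, then apply multidimensional Mellin inversion to the remaining smooth factor $c^\sharp g/\phi$ in the variables $c,n,r,s$, leaving $m$ untouched so that $(a_m)$ retains its interval structure. The hypothesis~\eqref{eq:hypo-der-g} guarantees rapid decay of the Mellin transform in each dual variable, so the resulting contour integrals contribute only a loss of $(CMNRSq)^\ee$. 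Proposition~\ref{prop:estim-genkloo} then applies to each inner sum with its $R$-parameter replaced by $qR$ and with $X\asymp\sqrt{MN}/(CS\sqrt{qR})$. Setting $u^2:=C^2S^2qR$ and carrying out the algebra, one checks that both ratios $L_{\reg}^2/(K_{\reg}^2 M\|b_{N,R,S}\|_2^2)$ and $L_{\exc}^2/(K_{\exc}^2 M\|b_{N,R,S}\|_2^2)$ are $O(q^3)$, which delivers the bound~\eqref{eq:somme-complete-bnrs}.

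The main difficulty is the algebraic translation in the last step: verifying that after the substitution $R\mapsto qR$ and the change of modulus $sc\leadsto cs\sqrt{qr}$, the somewhat involved expressions $L_{\reg}$ and $L_{\exc}$ of Proposition~\ref{prop:estim-genkloo} simplify into $K_{\reg}$ and $K_{\exc}$ within the advertised factor $q^{3/2}$ (which is manifestly wasteful but harmless for the intended applications). A secondary, subtler point is that the Mellin separation must be arranged so as not to disturb the interval structure of $(a_m)$, so that one can invoke the sharper form of Proposition~\ref{prop:estim-genkloo} featuring $L_{\exc}$ rather than its weaker variant $M^{1/4}L_{\exc}$.
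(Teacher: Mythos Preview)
Your overall strategy matches the paper's: recognise $\bar\chi(c)S(n\bar r,\pm m\bar q;sc)$ as $S_{\infty,1/s}$ for the group $\Gamma_0(qrs)$ via Lemma~\ref{lemme:kloo-chi}, absorb the stray phases into $\sigma_\infty$ and into the sequence, separate the smooth weight, and invoke Proposition~\ref{prop:estim-genkloo} with $X\asymp\sqrt{MN}/(CS\sqrt{qR})$. The algebraic check that $L_\reg,L_\exc$ collapse to $K_\reg,K_\exc$ within a factor $q^{3/2}$ is also what the paper does.

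However, your separation-of-variables step has a real gap. The claim that ``leaving $m$ untouched'' in the Mellin inversion preserves the interval structure of $(a_m)$ is incorrect: after Mellin-inverting $c^\sharp g/\phi$ in $(c,n,r,s)$ for fixed $m$, the transform still carries a smooth dependence on $m$ (since $g$ does through its second argument), and this factor must land in $a_m$, destroying the sharp cutoff. Your Mellin in $c$ compounds the problem: the resulting factor $c^{-it_1}$ can only be rewritten through the admissible variable $x=4\pi\sqrt{mn}/c^\sharp$ at the cost of a further factor $m^{-it_1/2}$. The paper's remedy is to change variables $c\mapsto x$ first and then take the \emph{Fourier} transform in all four of $m,n,r,s$; the additive phase $\e(-m\xi_1)$ so produced is absorbed, together with your $\e(mt)$, into the scaling matrix
\[
\sigma_\infty=\begin{pmatrix}1&\xi_1-t\\0&1\end{pmatrix},
\]
so that $(a_m)$ remains exactly $\bfUn_{(M,2M]}$. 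Your Mellin route can be repaired by an extra partial summation in $m$ (turning the smooth $m$-weight into a supremum over sharp cutoffs), but that is a genuine additional argument you would need to supply, not a consequence of ``leaving $m$ untouched''.
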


\begin{proof} We present the proof in the case where there is a~$+$ sign in the Kloosterman sums. The complementary case is similar. The main issue is separation of variables, as explained in~\cite[page~269]{DI}. The nuisance is mainly notational. We write
$$ g(c, m, n, r, s) = \int_{\bfR^4} \tfrac1{sc\sqrt{rq}}G(\tfrac{4\pi\sqrt{mn}}{sc\sqrt{rq}}, \xi_1, \xi_2, \xi_3, \xi_4)\e(-m\xi_1 - n\xi_2 - r\xi_3 - s\xi_4) \prod_{j=1}^4 \dd \xi_j, $$
by Fourier inversion, where for all~$(x, \xi_1, \dotsc, \xi_4) \in \bfR_+^* \times \bfR^4$, 
$$ G(x, \xi_1, \dotsc, \xi_4) := \int_{\bfR^4}g_*(x, x_1, \dotsc, x_4)\e(x_1\xi_1 + \dotsb + x_4\xi_4) \prod_{j=1}^4 \dd x_j ,$$
$$ g_*(x, x_1, \dotsc, x_4) := \frac{4\pi\sqrt{x_1x_2}}{x} g\big(\frac{4\pi\sqrt{x_1x_2}}{xx_4\sqrt{x_3q}}, x_1, \dotsc, x_4\big) .$$
By integration by parts, for any non-negative integers~$(\ell, \ell_1, \dotsc, \ell_4)$ with~$\ell\leq 4$ and~$\ell_j\leq 2$,
\begin{align*}
\frac{\partial^\ell G}{\partial x^\ell}(x, \xi_1, \dotsc, \xi_4) = \prod_j (2\pi i \xi_j)^{-\ell_j} \int_{\bfR^4} &\ \big(\frac{\partial^{\ell+\ell_1+\dotsb+\ell_4}}{\partial x^\ell \partial x_1^{\ell_1} \dotsb \partial x_4^{\ell_4}}g_* (x, x_1, \dotsc, x_4)\big) \times \\
&\times \e(x_1\xi_1 + \dotsb + x_4\xi_4) \prod_{j} \dd x_j
\end{align*}
assuming~$\xi_j\neq 0$ if~$\ell_j>0$. The derivatives are estimated using~\eqref{eq:hypo-der-g}. Choose~$\ell_1=0$ or~$\ell_1=2$ according to whether~$|\xi_1|M<1$ or not, and similarly for~$\ell_2$, $\ell_3$, $\ell_4$. Then
$$ \frac{\partial^\ell G}{\partial x^\ell}(x, \xi_1, \dotsc, \xi_4) \ll \frac{MNRS^2C\sqrt{qR} (\sqrt{MN}/(CS\sqrt{qR}))^{-\ell}}{(1+(\xi_1 M)^2)(1+(\xi_2 N)^2)(1+(\xi_3 R)^2)(1+(\xi_4 S)^2)}. $$
We abbreviate further
$$ \phi(x) = \phi_{\xi_1, \dotsc, \xi_4}(x) := \frac{(1+(\xi_1 M)^2)(1+(\xi_2 N)^2)(1+(\xi_3 R)^2)(1+(\xi_4 S)^2)}{MNRS^2C\sqrt{qR}}G(x, \xi_1, \dotsc, \xi_4). $$
This function satisfies the hypotheses of Proposition~\ref{prop:estim-genkloo}, with\footnote{Note that in~\cite[page~278]{DI}, some occurences of~$X$ should read~$X^{-1}$.}~$X = \sqrt{MN}/(CS\sqrt{qR})$, uniformly in~$\xi_j$. Define
$$ {\tilde b}_{n, r, s} := b_{n, r, s} \e(-n(\xi_2 + \bar{s}/(rq)) - r\xi_3 - s\xi_4\big). $$
Finally, by Lemma~\ref{lemme:kloo-chi} with the scaling matrices
$$ \sigma_{\infty} = \begin{pmatrix} 1 & \xi_1-t \\ 0 & 1 \end{pmatrix}, \qquad \sigma_{1/s} = \begin{pmatrix} \sqrt{rq} & 0 \\ s\sqrt{rq} & 1/\sqrt{rq} \end{pmatrix}, $$
we have
$$ \bar{\chi}(c) S(n\bar{r}, m\bar{q} ; sc)\e(m(t-\xi_1)+n\bar{s}/(rq)) = S_{\infty, 1/s}(m, n ; sc\sqrt{rq}). $$
Proposition~\ref{prop:estim-genkloo} can therefore be applied and yields
\begin{align*}
\ssum{m, n, r, s\\(s, rq)=1}{\tilde b}_{n, r, s}\sum_{(c, rq)=1} \frac1{cs\sqrt{rq}}&\ \phi\big(\frac{4\pi\sqrt{mn}}{sc\sqrt{rq}}\big)S_{\infty, 1/s}(m, n ; sc\sqrt{rq}) \\
&\ll_\ee \frac{q^{3/2}(CMNRS)^{\ee}}{CS\sqrt{qR}}(W_\reg + W_\exc)\sqrt{M}\|b_{N, R, S}\|_2,
\end{align*}
with
\begin{align*}
&W_\reg^2 = RS\frac{(C^2S^2R + MN + C^2SN)(C^2S^2R + MN + C^2SM)}{C^2S^2R + MN}, \\
&W_\exc^2 = C^3S^2\sqrt{R(N+ RS)}.
\end{align*}
From the definitions of~$\phi$ and~$G$, we deduce the claimed bound.
\end{proof}

\subsubsection{Bounds for incomplete Kloosterman sums}\label{sec:completion-sums}
\label{sec:bounds-incompl-kloos}

In this section, we prove Theorem~\ref{thm:quintilin}. As a first reduction, we remark that it suffices to prove the result when the sequence~$b_{n, r, s}$ is supported on~$N<n\leq 2N$, by summing dyadically over~$N$ and by concavity of~$\sqrt{\cdot}$ (losing a factor~$(\log N)^{1/2}$ in the process). Secondly, we let~$s_0\mod{q}^\times$ be fixed and assume without loss of generality that
\begin{equation}
b_{n,r,s}=0 \text{ unless } s\equiv s_0\mod{q}.\label{eq:hypo-s0}
\end{equation}
We will recover the full bound~\eqref{eq:majo-quintilin} by summing over~$s_0\mod{q}^\times$ (losing a factor~$q^{1/2}$ in the process by concavity). Let
\begin{equation}
\ddot{g}(c, m, n, r, s) := \int_{-\infty}^\infty g(c, \xi, n, r, s)\e(\xi m)\dd\xi.\label{eq:def-ddg}
\end{equation}
By Poisson summation, we write the left-hand side of~\eqref{eq:majo-quintilin} as
\begin{align*}
&\ \ssum{c, n, r, s\\(qr,sc)=1\\c\equiv c_0\mod{q}}b_{n,r,s}\ssum{\delta\mod{sc}\\(\delta,sc)=1}
\e\Big(n\frac{\bar{r\delta}}{sc}\Big)\ssum{d\equiv\delta\mod{sc}\\d\equiv d_0 \mod{q}}g(c,d,n,r,s)\\
= &\ \ssum{c, n, r, s\\(qr,sc)=1\\c\equiv c_0\mod{q}}\frac{b_{n,r,s}}{scq}\sum_{(\delta,sc)=1}
\e\Big(n\frac{\bar{r\delta}}{sc}\Big)\sum_m\ddot{g}(c, m/sqc, n, r, s)
\e\Big(-\frac{md_0\bar{sc}}q-\frac{m\delta\bar{q}}{sc}\Big) \\
= &\ \ssum{c, m, n, r, s\\(qr,sc)=1\\c\equiv c_0\mod{q}}\frac{b_{n,r,s}}{scq}\ddot{g}(c, m/scq, n, r, s)
\e\Big(\frac{-md_0\bar{s_0c_0}}q\Big)S(n\bar{r}, -m\bar{q} ; sc) \numberthis \label{eq:ql-apres-ipp}
\end{align*}
where~$S(\ldots)$ is the usual Kloosterman. Let~$M>0$ be a parameter. We write~\eqref{eq:ql-apres-ipp} as~$\cA_0 + \cA_\infty + \cB$, where~$\cA_0$ is the contribution of~$m=0$, $\cA_\infty$ is the contribution of indices~$m$ such that~$|m|>M$, and~$\cB$ is the contribution of indices~$m$ with~$0<|m|\leq M$. By the bound for Ramanujan sums~\cite[formula~(3.5)]{IK},
\[ \cA_0 \ll \frac1q\ssum{c, n, r, s\\(qr, sc)=1\\c\equiv c_0\mod{q}}\frac{|b_{n,r,s}|}{sc}|\ddot{g}(c, 0, n, r, s)|(n, sc)
\ll q^{-2}(\log S)^2 D\{NR/S\}^{1/2}\|b_{N,R,S}\|_2 .\]

By repeated integration by parts in the integral~\eqref{eq:def-ddg}, for fixed~$k\geq 1$ and~$m\neq 0$ we have
\[ \ddot{g}(c, m/(scq), n, r, s) \ll_k D^{1-k(1-\ee_0)}\Big(\frac{scq}{|m|}\Big)^k .\]
Taking~$k\asymp 1/\ee_0$, we have that there is a choice of~$M \asymp (SCqD)^{\ee+O(\ee_0)}SCq/D$ such that the bound
$$ \ddot{g}(c, m/(scq), n, r, s)\ll_{\ee} 1/m^2 \qquad (|m|>M) $$
holds. Bounding trivially the Kloosterman sum in~\eqref{eq:ql-apres-ipp} by~$sc$, we obtain
\begin{equation}
\cA_\infty \ll_\ee (SCqD)^{\ee+O(\ee_0)} q^{-2} D\{NR/S\}^{1/2}\|b_{N,R,S}\|_2 \label{eq:borne-cA}
\end{equation}
which is also acceptable (if~$\ee_0$ is small enough, the factor~$q^{-2+\ee+O(\ee_0)}$ is bounded).

There remains to bound~$\cB$; we may assume that~$M\geq 1$ for otherwise~$\cB$ is void. By dyadic decomposition,
$$ |\cB| \ll \log 2M \sup_{1/2\leq M_1\leq M} |\cB(M_1)|, $$
where
$$ \cB(M_1) := \ssum{c, m, n, r, s\\(qr,sc)=1\\M_1<|m|\leq 2M_1\\c\equiv c_0\mod{q}}\frac{b_{n,r,s}}{scq}\ddot{g}(c, m/scq, n, r, s)
\e\Big(\frac{-md_0\bar{s_0c_0}}q\Big)S(n\bar{r}, -m\bar{q} ; sc) .$$
We insert the definition of~$\ddot{g}$ after having changed variables~$\xi\to\xi scq/m$, to obtain
$$ |\cB(M_1)| \ll \frac{DM_1}{SCq} \sup_{\xi \asymp DM_1/(SCq)} |\cB'(M_1, \xi)|, $$
where
\begin{equation}
\cB'(M_1, \xi) := \ssum{c, m, n, r, s\\(qr, sc)=1\\M_1<|m|\leq 2M_1\\c\equiv c_0\mod{q}}\frac{b_{n, r, s}}m g(c, \xi scq/m, n, r, s)\e\Big(\frac{-md_0\bar{s_0c_0}}q\Big)S(n\bar{r}, -m\bar{q} ; sc).\label{eq:ql-apres-completion}
\end{equation}
By orthogonality of multiplicative characters, we have
$$ \cB'(M_1, \xi) = \frac1{M_1\vphi(q)}\sum_{\chi\mod{q}}\chi(c_0)\cS(M_1, \xi, \chi), $$
where
$$ \cS(M_1, \xi, \chi) := \ssum{r, s\\(s, qr)=1}\ssum{m, n\\|m|\asymp M_1} b_{n, r, s}\sum_{(c, rq)=1}\bar{\chi(c)}g_1(c, m, n, r, s)\e\Big(\frac{-md_0\bar{s_0c_0}}q\Big)S(n\bar{r}, -m\bar{q} ; sc), $$
$$ g_1(c, m, n, r, s) := M_1m^{-1}g(c, \xi scq/m, n, r, s) . $$
Proposition~\ref{prop:bd-twisted} can be applied to the sums~$\cS(M_1, \xi, \chi)$, at the cost of enlarging the bound by a factor~$O((CDNRS)^{60\ee_0})$ in order for the derivative conditions~\eqref{eq:hypo-der-g} to be satisfied. We obtain
$$ \cS(M_1, \xi, \chi) \ll_\ee q^{3/2}(CDNRSq)^{\ee+O(\ee_0)}\big\{L_\reg + L_\exc\big\}\sqrt{M_1}\|b_{N, R, S}\|_2, $$
$$ L_\reg^2 := RS\frac{(C^2S^2R + M_1N + C^2SN)(C^2S^2R + M_1N + C^2SM_1)}{C^2S^2R + M_1N}, $$
$$ L_\exc^2 := C^3S^2\sqrt{R(N + RS)}. $$
From there, computations identical to~\cite[page~282]{DI} allow to bound
$$ L_\reg^2 \ll RS\big(C^2S^2R + M_1N + \frac{C^2M_1N}{R} + C^2S(M_1+N)\big). $$
We deduce successively
$$ |\cB(M_1)|  \ll_\ee (CDNRSq)^{\ee+O(\ee_0)} \frac{q D\sqrt{M_1}}{SC} L^*(M_1) \|b_{N, R, S}\|_2, $$
$$ L^*(M_1)^2 := RS(C^2S^2R + M_1N + C^2M_1N/R + C^2S(M_1+N)) + C^3S^2\sqrt{R(N + RS)}, $$
and finally
\begin{equation}
\cB \ll_\ee (CDNRSq)^{\ee+O(\ee_0)} q \cK,\label{eq:borne-cB}
\end{equation}
$$ \cK^2 := qCS(N + RS)(C + RD) + C^2DS\sqrt{(N+RS)R}.  $$
Grouping our two bounds~\eqref{eq:borne-cA} and~\eqref{eq:borne-cB}, and summing over~$s_0\mod{q}^\times$, we obtain the claimed result.

\section{Convolutions in arithmetic progressions}\label{sec:convo-bin}

In this section, we proceed with an instance of the dispersion method, for convolutions of two sequences one of which is supported in~$[x^\eta, x^{1/3-\eta}]$ for some~$\eta>0$. This extends~\cite[Section~13]{BFI} and~\cite[Section~V]{FouvryTit}.

Given a parameter~$R\geq 1$, an integer~$q\geq 1$ and a residue class~$n\mod{q}$, we let
$$ \cX_q(R) := \{\chi\mod{q},\ \cond(\chi)\leq R\}, $$
and
\begin{equation}
\begin{aligned}
\cu_R(n ; q) :=&\ \bfUn_{n\equiv 1\mod{q}} - \frac1{\vphi(q)}\ssum{\chi\in\cX_q(R)} \chi(n) \\
=&\ \frac1{\vphi(q)}\ssum{\chi\mod{q}\\\cond(\chi)>R} \chi(n). 
\end{aligned}
\label{eq:def-cu-R}
\end{equation}
Note that this vanishes when~$q\leq R$ or~$(n, q)>1$. We have the trivial bound
\begin{equation}
|\cu_R(n ; q)| \ll \bfUn_{n\equiv 1\mod{q}} + \frac{R\tau(q)}{\vphi(q)}.\label{eq:tv-cuR}
\end{equation}
It will also be sometimes useful to write
\begin{equation}
\cu_R(n ; q) = \Big(\bfUn_{n\equiv 1\mod{q}} - \frac{\bfUn_{(n, q)=1}}{\vphi(q)}\Big) -  \frac1{\vphi(q)}\ssum{\chi\mod{q} \\ 1<\cond(\chi)\leq R} \chi(n).\label{eq:expr-cuR-cu1}
\end{equation}

\begin{theoreme}\label{thm:distrib-convo}
Let~$M$,~$N$,~$Q$,~$R\geq 1$ and~$\eta$ be given, with~$x:=MN$ and~$x^{1/4}\leq Q$. Then there exists~$\delta$ depending at most on~$\eta$ such that the following holds. Let two sequences~$(\alpha_m)$, $(\beta_n)$ supported in~$n\in(N, 2N]$ and~$m\in(M, 2M]$ be given, which satisfy for some~$A\geq 1$,
\begin{equation}
|\alpha_m|\leq \tau(m)^A, \qquad |\beta_n|\leq \tau(n)^A.\label{eq:cond-taille}
\end{equation}
Let~$a_1, a_2\in\bfZ\smallsetminus\{0\}$, and assume that
\begin{equation}\label{eq:cond-aN}
\left\{
\begin{aligned}
& x^\eta \leq N\leq Q^{2/3-\eta}, \\
& Q \leq x^{1/2+\delta}, \\
& R, |a_1|, |a_2| \leq x^\delta.
\end{aligned}
\right. 
\end{equation}
Then for small enough~$\eta$, we have
\begin{equation}
\ssum{Q<q\leq 2Q \\ (q, a_1a_2)=1} \ssum{n, m \\ (n, a_2)=1} \alpha_m \beta_n \cu_R(mn\bar{a_1}a_2 ; q) \ll x(\log x)^{O(1)}R^{-1}.\label{eq:thm-distrib-convo}
\end{equation}
The implicit constants depend on~$\eta$ and~$A$ at most.
\end{theoreme}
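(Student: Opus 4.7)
The strategy is the dispersion method of Linnik in the form refined by~\cite{BFI, FouvryTit}, with the quintilinear bound of Theorem~\ref{thm:quintilin} handling the resulting off-diagonal sums. Using the decomposition~\eqref{eq:expr-cuR-cu1}, I would split the sum in~\eqref{eq:thm-distrib-convo} into a \emph{dispersion piece} $\Sigma_\mathrm{disp}$, corresponding to the bracket $\bfUn_{mn \equiv a_1\bar{a_2}\mod{q}} - \varphi(q)^{-1}\bfUn_{(mn, q)=1}$, and a \emph{small-conductor piece} $\Sigma_\mathrm{char}$ arising from the primitive characters of conductor $r$ with $1 < r \leq R$.

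For $\Sigma_\mathrm{char}$, grouping characters by their primitive conductor $r$ factors the sum as
\[ \Sigma_\mathrm{char} = -\sum_{1<r\leq R}\ssum{\chi^*\mod{r}\\\mathrm{prim.}}\bar{\chi^*}(a_1)\chi^*(a_2)\ssum{Q<q\leq 2Q\\ r\mid q,\ (q, a_1a_2)=1}\frac 1{\varphi(q)}\Big(\sum_m\alpha_m\chi^*(m)\Big)\Big(\sum_n\beta_n\chi^*(n)\Big). \]
The $q$-sum is $O(r^{-1}\log(2Q))$; Cauchy--Schwarz in $\chi^*$, combined with Lemma~\ref{lemme:GC} and the bounds~\eqref{eq:cond-taille}, then yields $\Sigma_\mathrm{char} \ll x(\log x)^{O(1)}R^{-1}$ provided $R\leq x^\delta$ with $\delta$ small enough (the extra factor of $1/R$ coming from the fact that we average $\chi^*$ over conductors $\geq 2$).

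The heart of the matter is $\Sigma_\mathrm{disp}$. Applying Cauchy--Schwarz on the long variable $m$ (which by~\eqref{eq:cond-aN} has size $M=x/N \gg Q$) and expanding the square produces, after smoothing the $q$-range,
\[ \Sigma_\mathrm{disp}^2 \ll M(\log x)^{O(1)}\ssum{q_1, q_2 \asymp Q\\(q_1q_2, a_1a_2)=1}\sum_{n_1, n_2}\beta_{n_1}\overline{\beta_{n_2}}\Big(\mathcal N(q_1, q_2, n_1, n_2) - \mathcal M(q_1, q_2, n_1, n_2)\Big), \]
where $\mathcal N$ counts $m\in(M, 2M]$ with $mn_i\equiv a_1\bar{a_2}\mod{q_i}$ ($i=1,2$) and $\mathcal M$ is the corresponding smoothed main term coming from the $\varphi(q_i)^{-1}$ pieces. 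Incompatible pairs contribute zero to $\mathcal N$; for compatible pairs the leading term $M/[q_1, q_2]$ matches $\mathcal M$ exactly, and Poisson summation on $m$ reduces the residual to an exponential sum of shape
\[ \ssum{q_1, q_2\asymp Q,\ n_1, n_2\asymp N\\0<|h|\leq Q^{2+\varepsilon}/M}\widehat f\!\left(\tfrac h{[q_1, q_2]}\right)\beta_{n_1}\overline{\beta_{n_2}}\,\e\!\left(\tfrac{h\alpha(q_1, q_2, n_1, n_2)}{[q_1, q_2]}\right), \]
where $\alpha$ is a Bezout lift of the reciprocal of $a_2 n_i$ modulo $q_i$. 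After reciprocity, the phase separates through $q_1$ and $q_2$, and the sum matches the shape of~\eqref{eq:majo-quintilin} with the smooth variables $(c, d)=(q_1, q_2)$, the spectral variable $n\leftrightarrow h$, and the ``rough'' variables $(r, s)\leftrightarrow(n_1, n_2)$; the coprimality $(q_1q_2, a_1a_2)=1$ is encoded as the congruence conditions $c\equiv c_0$, $d\equiv d_0\mod{q}$ of Theorem~\ref{thm:quintilin}, with $q$ absorbing $a_1a_2$.

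The main obstacle is to choreograph the reciprocity and untwisting steps so that the resulting phase exactly matches the quintilinear shape, and then to check under~\eqref{eq:cond-aN} that each of the four terms in $K(C, D, N, R, S)^2$ of Theorem~\ref{thm:quintilin}, evaluated at $C=D=Q$, $N_\mathrm{thm}=Q^{2+\varepsilon}/M$ and $R_\mathrm{thm}=S_\mathrm{thm}=N$, yields a bound which---after dividing by $M$ and taking a square root---produces $O(x(\log x)^{O(1)}R^{-1})$. The condition $N \leq Q^{2/3-\eta}$ is precisely what forces the term $D^2 N R S^{-1}\asymp Q^2$ to remain manageable, while $Q\leq x^{1/2+\delta}$ controls the leading contribution $qCS(RS+N)(C+RD)$, in direct analogy with the corresponding balance in~\cite{BFI, FouvryTit}.
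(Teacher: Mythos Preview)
Your overall architecture (Cauchy--Schwarz on~$m$, Poisson on~$m$, reciprocity, then Theorem~\ref{thm:quintilin}) is the right one, but the decomposition you make at the very start is fatal. You split~$\cu_R = \cu_1 - (\text{conductors }\leq R)$ and propose to bound~$\Sigma_{\mathrm{disp}}$ and~$\Sigma_{\mathrm{char}}$ \emph{separately}. Neither piece is individually~$O(x(\log x)^{O(1)}R^{-1})$ for general sequences satisfying only~\eqref{eq:cond-taille}. Take~$\beta_n=\chi_0(n)$ for a fixed primitive~$\chi_0$ of conductor~$r_0\in(1,R]$: the term~$\chi^*=\bar\chi_0$ alone contributes~$\asymp x/r_0$ to~$\Sigma_{\mathrm{char}}$, which can be~$\asymp x$. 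Your large sieve argument cannot produce a factor~$R^{-1}$ here, because the large sieve over conductors~$r\leq R$ gives~$(R^2+N)\sum|\beta_n|^2\asymp N^2$, not~$N^2/R^2$. Correspondingly,~$\Sigma_{\mathrm{disp}}$ is also large: when you run the dispersion on~$\cu_1$, the main term~$X_1-X_3$ equals~$\sum_{q_1,q_2}\tfrac{1}{[q_1,q_2]}\sum_{n_1,n_2}\beta_{n_1}\bar\beta_{n_2}\,\cu_1(n_1\bar n_2;(q_1,q_2))$, and this is \emph{not} small without a Siegel--Walfisz hypothesis on~$\beta$ --- precisely the hypothesis Theorem~\ref{thm:distrib-convo} is designed to avoid.

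The paper's fix is not to split at all: one runs the dispersion directly on~$\cu_R$. Then~$\cS_2$ and~$\cS_3$ carry the small-conductor character sums, and a computation shows the resulting main terms satisfy~$X_2=X_3$, so that what survives is~$X_1-X_3=\sum\tfrac{1}{[q_1,q_2]}\sum\beta_{n_1}\bar\beta_{n_2}\,\cu_R(n_1\bar n_2;(q_1,q_2))$. Now only characters of conductor~$>R$ appear, and the multiplicative large sieve over~$R<r\leq Q$ (with the weight~$r^{-2}$ from~$1/[q_1,q_2]\vphi((q_1,q_2))$) gives~$X_1-X_3\ll N^2R^{-2}(\log x)^{O(1)}$, hence the~$R^{-1}$ after the square root. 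Two further technical reductions you omit are also needed before the quintilinear bound can be invoked: the sharp cutoff~$Q<q\leq 2Q$ must be smoothed, and~$\beta_n$ must be reduced to squarefree support (so that the reciprocity identity~\eqref{eq:mod-1} is available); and before applying Theorem~\ref{thm:quintilin} one extracts~$q_0=(q_1,q_2)$ and~$n_0=(n_1,n_2)$, with the congruence modulus~$\mathbf q$ in Theorem~\ref{thm:quintilin} equal to~$n_0a_2$ rather than~$a_1a_2$.
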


Introducing~$\cu_R(n ; q)$ is technically much more convenient than the usual
\begin{equation}
\cu_1(n ; q) = \bfUn_{n\equiv 1\mod{q}} - \frac{\bfUn_{(n, q)=1}}{\vphi(q)} .\label{eq:def-cu1}
\end{equation}
Indeed, there are no equidistribution assumptions on our sequences in Theorem~\ref{thm:distrib-convo}.

\subsection{Bombieri-Vinogradov range}

Before we embark on the dispersion method we need an estimate which is relevant to values of the moduli less than the threshold~$x^{1/2-\ee}$.

\begin{lemme}\label{lemme:motohashi}
Let~$M, N, R\geq 1$. Let~$x=MN$, and suppose we are given two sequences~$(\alpha_m)$ and~$(\beta_n)$ supported on the integers of~$(M, 2M]$ and~$(N, 2N]$ respectively, satisfying the bounds~\eqref{eq:cond-taille}. Suppose that~$Q\leq x^{1/2}/R$ and~$R\leq Q$. Then
$$ \sum_{Q < q\leq 2Q} \max_{\substack{0<a<q \\ (a, q)=1}}\Big|\sum_{m, n} \alpha_m \beta_n \cu_R(mn\bar{a} ; q) \Big| \ll x(\log x)^{O(1)}(R^{-1} + M^{-1/2} + N^{-1/2}). $$
\end{lemme}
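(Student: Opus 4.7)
The plan is to expand $\cu_R$ into Dirichlet characters and apply the multiplicative large sieve. By the second expression in~\eqref{eq:def-cu-R},
\[ \sum_{m,n} \alpha_m \beta_n \cu_R(mn\bar{a}; q) = \frac{1}{\vphi(q)} \ssum{\chi\mod{q} \\ \cond(\chi) > R} \bar{\chi}(a)\, A_\chi B_\chi, \quad A_\chi := \sum_m \alpha_m \chi(m),\ B_\chi := \sum_n \beta_n \chi(n). \]
Since $|\bar{\chi}(a)| \leq 1$, the maximum over $a$ is handled trivially, reducing the task to bounding
\[ \cS := \sum_{Q < q \leq 2Q} \frac{1}{\vphi(q)} \ssum{\chi\mod{q} \\ \cond(\chi) > R} |A_\chi B_\chi|. \]

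Next, I rewrite each $\chi\mod{q}$ as induced from a primitive $\chi^*\mod{r}$ with $R < r \leq q$ and $r \mid q$ via $\chi(m) = \chi^*(m) \bfUn_{(m, q/r)=1}$, and interchange summations so that $r$ and $\chi^*$ are outside and $q$ (with $r \mid q$, $Q < q \leq 2Q$) is inside. The coprimality condition $(m, q/r)=1$ would be cleared by Mobius inversion, which yields shifted sequences $\alpha^{(d)}_m := \alpha_{dm}$ with $\|\alpha^{(d)}\|_2^2 \ll \tau(d)^{2A}(M/d)(\log x)^{O(1)}$ (and likewise $\beta^{(e)}_n := \beta_{en}$), together with divisor weights $1/([d,e]r)$ coming from $\sum_{\operatorname{lcm}(r,d,e)\mid q,\ Q<q\leq 2Q}1/\vphi(q) \ll (\log\log x)^{O(1)}/([d,e]r)$. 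Bounding $[d,e]\geq\sqrt{de}$, the problem reduces to
\[ \sum_{R < r \leq 2Q} \frac{1}{r} \ssum{\chi^*\text{ prim mod }r} \Bigl(\sum_{d \leq 2M} \frac{|A_{\chi^*}^{(d)}|}{\sqrt{d}}\Bigr) \Bigl(\sum_{e \leq 2N} \frac{|B_{\chi^*}^{(e)}|}{\sqrt{e}}\Bigr), \]
up to factors of $(\log x)^{O(1)}$.

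This last quantity is handled by Cauchy-Schwarz on the $\chi^*$-sum, then Cauchy-Schwarz on the $d$-sum (costing a factor $\sum_{d \leq 2M} 1/d \ll \log x$), and finally Lemma~\ref{lemme:GC} dyadically with $r \sim R'$, $R < R' \leq Q$. The large sieve yields $\sum_{r \sim R'}\sum_{\chi^*} |A_{\chi^*}^{(d)}|^2 \ll (R'^2 + M/d)\tau(d)^{2A}(M/d)(\log x)^{O(1)}$, and summing over $d \leq 2M$ this collapses to $M(R'^2 + M)(\log x)^{O(1)}$---exactly as if the coprimality conditions had never been imposed. Combining with the corresponding bound for $\beta$, the contribution of scale $R'$ is
\[ \ll \frac{(\log x)^{O(1)}}{R'}\sqrt{MN(R'^2+M)(R'^2+N)} \ll (\log x)^{O(1)}\Bigl(R'\sqrt{x} + \sqrt{xM} + \sqrt{xN} + \frac{x}{R'}\Bigr). \]
Summing dyadically over $R < R' \leq 2Q$, the term $R'\sqrt{x}$ is largest at $R' \sim Q$, yielding $\ll Q\sqrt{x}(\log x)^{O(1)} \ll (x/R)(\log x)^{O(1)}$ by the hypothesis $Q \leq x^{1/2}/R$; the term $x/R'$ is largest at $R' \sim R$, yielding $\ll (x/R)(\log x)^{O(1)}$; and the remaining two terms give $\ll x(M^{-1/2} + N^{-1/2})(\log x)^{O(1)}$.

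The main technical obstacle I anticipate is the Mobius step for the coprimality condition $(m, q/r) = 1$, where one must carefully track the divisor sums so that they only cost a factor $(\log x)^{O(1)}$ and do not spoil the three-term structure $R^{-1} + M^{-1/2} + N^{-1/2}$. The role of the hypothesis $Q \leq x^{1/2}/R$ is then transparent from the last step: it is precisely what balances the contribution of the largest conductors $R' \sim Q$ against the target error $x/R$, which is the Bombieri--Vinogradov-type threshold adapted to the sifted quantity $\cu_R$.
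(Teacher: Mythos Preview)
Your proposal is correct and follows exactly the approach the paper invokes: the paper's ``proof'' is a one-line citation to \cite[Theorem~17.4]{IK} with the remark that only conductors $r>R$ occur, and you have essentially written out that argument (character expansion, reduction to primitive characters, Cauchy--Schwarz, multiplicative large sieve, dyadic summation over conductor ranges). One cosmetic point: the coprimality condition is $(m,q)=1$ rather than $(m,q/r)=1$, but since $\chi^*$ already kills $(m,r)>1$ your M\"obius divisors $d$ are automatically coprime to $r$, so $\operatorname{lcm}(r,d,e)=r[d,e]$ and the rest goes through as you wrote.
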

\begin{proof}
See~\cite[Theorem~17.4]{IK}. Only the case~$r>R$ appears in our case.
\end{proof}

\subsection{First reductions}

First we apply two reductions, following Section V.2 of~\cite{FouvryTit} and Section 3 of~\cite{FI}. We replace the sharp cutoff for the sum over~$q$ by a smooth function~$\gamma(q)$ ; and we transfer the squareful part of~$n$ into the number~$a_2$, allowing us to assume that~$n$ is squarefree. The squarefreeness assumption on~$n$ will be useful when dealing with GCD's (in particular in equation~\eqref{eq:mod-1} below). Note also that the statement of Theorem~\ref{thm:distrib-convo} is monotonically weaker as~$\delta\to0$, so that whenever needed, we will take the liberty of reducing the value of~$\delta$ in a way that depends at most on~$\eta$.
\begin{prop}\label{prop:distrib-convo}
Let~$x, M, N, Q, R, \eta$ and the sequences~$(\alpha_m)$ and~$(\beta_n)$ be as in Theorem~\ref{thm:distrib-convo}. Assume that~$(\beta_n)$ is supported on squarefree integers. There exists~$\delta>0$ such that for any smooth function~$\gamma:\bfR_+\to[0, 1]$ with
\begin{equation}
\bfUn_{q\in(Q, 2Q)} \leq \gamma(q) \leq \bfUn_{q\in(Q/2, 3Q/2]}, \label{eq:cond-taille-q}
\end{equation}
and~$\|\gamma^{(j)}\|_\infty\ll_j Q^{-j+B\delta j}$ for some~$B\geq0$ and all fixed~$j\geq 0$, under the conditions~\eqref{eq:cond-aN}, we have
\begin{equation}
\ssum{q \\ (q, a_1a_2)=1}\gamma(q) \ssum{n, m \\ (n, a_2)=1} \alpha_m \beta_n \cu_R(mn\bar{a_1}a_2 ; q) \ll x(\log x)^{O(1)}R^{-1}.\label{eq:majo-distrib-convo}
\end{equation}
The implicit constants depend on~$\eta$, $A$ (in~\eqref{eq:cond-taille}),~$B$ and the function~$\gamma$ at most.
\end{prop}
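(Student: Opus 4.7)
The plan is to apply Linnik's dispersion method, in the spirit of~\cite{BFI} and~\cite{FouvryTit}, with the Kloosterman-sum input supplied by Theorem~\ref{thm:quintilin}. Since $(\alpha_m)$ is generic, the first step is the Cauchy--Schwarz inequality on the variable $m$; together with the standard bound $\sum_{M<m\leq 2M}\tau(m)^{2A}\ll M(\log M)^{O(1)}$, this reduces the left-hand side of~\eqref{eq:majo-distrib-convo} to estimating the dispersion
\[
\mathcal{V} := \sum_m g(m)\Big|\sum_q\gamma(q)\ssum{n\\(n,a_2)=1}\beta_n\cu_R(mn\bar{a_1}a_2;q)\Big|^2,
\]
where $g$ is a smooth majorant of $\bfUn_{m\in(M,2M]}$; the target is $\mathcal{V}\ll N^2M(\log x)^{O(1)}R^{-2}$.

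Expanding the square produces a quadruple sum over $(q_1,q_2,n_1,n_2)$ with an inner sum $\sum_m g(m)\cu_R(mn_1\bar{a_1}a_2;q_1)\overline{\cu_R(mn_2\bar{a_1}a_2;q_2)}$. I would substitute~\eqref{eq:expr-cuR-cu1} to split each $\cu_R$ factor into the single-progression defect $\cu_1$ and a contribution from characters with $1<\cond(\chi)\leq R$. The contributions involving at least one small-conductor character are controlled by the multiplicative large sieve (Lemma~\ref{lemme:GC}) combined with the P{\'o}lya--Vinogradov bound Lemma~\ref{lemme:PV}, yielding an admissible $R^{-1}$-type saving through the $\sqrt{\cond(\chi)}$ dependence. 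For the pure $\cu_1\overline{\cu_1}$ term, Poisson summation on $m$ modulo $[q_1,q_2]$ is applied to each of the four products arising from $\cu_1=\bfUn_{\cdot\equiv 1\mod{q}}-\bfUn_{(\cdot,q)=1}/\vphi(q)$; the zero-frequency coefficients cancel by the very definition of $\cu_1$, leaving an oscillatory remainder supported on dual frequencies $h\neq 0$ and built out of Kloosterman sums of modulus $[q_1,q_2]$.

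Writing $q_1=qs_1$, $q_2=qs_2$ with $q=(q_1,q_2)$ and $(s_1,s_2)=1$, twisted multiplicativity factors these Kloosterman sums and lets one recast the resulting expression in the shape of Theorem~\ref{thm:quintilin}: the variables $s_1,s_2$ play the roles of the smooth variables $c,d$ subject to congruences modulo the small parameter $q$, the combined coefficients $\beta_{n_1}\overline{\beta_{n_2}}$ packaged with the dual frequency $h$ provide the sequence $b_{n,r,s}$, and the weight inherits its derivative bounds from $\gamma$ and $g$. Plugging in $C,D\asymp Q^{1/2}$ and $S\asymp q$ into the kernel $K(C,D,N,R,S)$, and summing dyadically, closes the required bound on $\mathcal{V}$ under the constraints~\eqref{eq:cond-aN}. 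The hard part will be the careful bookkeeping of the gcds arising during the dispersion --- in particular $(n_1,n_2)$, whose control relies on the squarefreeness of $\beta_n$, and $(q_1,q_2)$, which must be isolated as the small modulus input to Theorem~\ref{thm:quintilin} --- together with the verification that the phases produced by Poisson can be absorbed into the smooth weight required by that theorem. The upper bound $N\leq Q^{2/3-\eta}$ in~\eqref{eq:cond-aN} is tight: it is precisely what keeps the term $D^2NR/S$ in $K(\cdot)^2$ negligible, so this parameter range is exactly the range of applicability of the method.
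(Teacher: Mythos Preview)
Your overall architecture (Cauchy--Schwarz on~$m$, Poisson on~$m$, then Theorem~\ref{thm:quintilin} for the nonzero frequencies) matches the paper, but two of the key steps are misidentified and, as written, the argument does not close.

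\textbf{The zero frequencies do not cancel.} You split~$\cu_R$ via~\eqref{eq:expr-cuR-cu1} and assert that, for the pure~$\cu_1\overline{\cu_1}$ piece, ``the zero-frequency coefficients cancel by the very definition of~$\cu_1$''. They do not. After Poisson on~$m$ in the four products coming from~$\cu_1(mn_1;q_1)\overline{\cu_1(mn_2;q_2)}$, the zero-frequency terms combine to
\[
\hat g(0)\sum_{q_1,q_2}\frac{\gamma(q_1)\gamma(q_2)}{[q_1,q_2]}\ssum{n_1,n_2\\(n_j,q_j a_2)=1}\beta_{n_1}\overline{\beta_{n_2}}\,\cu_1(n_1\bar n_2;(q_1,q_2)),
\]
which is \emph{not} zero; it is genuinely of size~$MN^2$. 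In the paper this is precisely the quantity~$X_1-X_3$ (specialised to~$R=1$), and the only way it is beaten down to~$MN^2R^{-2}$ is by keeping~$\cu_R$ (not~$\cu_1$) throughout the dispersion, so that the diagonal comparison becomes~$\sum\cu_R(n_1\bar n_2;(q_1,q_2))$ with conductors forced~$>R$, to which the multiplicative large sieve (Lemma~\ref{lemme:GC}) then applies. By peeling off the small conductors before dispersing you have thrown this mechanism away; the cross terms you propose to treat by P\'olya--Vinogradov do not recover it, since PV on characters of conductor~$\leq R$ gives a \emph{loss}~$\sqrt{R}$ rather than a saving.

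\textbf{The modulus in Theorem~\ref{thm:quintilin} is not~$(q_1,q_2)$.} You propose to factor~$q_j=qs_j$ with~$q=(q_1,q_2)$ and feed~$q$ in as the small congruence modulus of Theorem~\ref{thm:quintilin}. That is not how the phase works. After the Chinese-remainder decomposition of the Poisson phase (the identity~\eqref{eq:mod-1} in the paper), the residual oscillation that obstructs smoothness in~$q_1,q_2$ has denominator~$a_2n_0$, where~$n_0=(n_1,n_2)$; one is therefore forced to split~$q_1,q_2$ into residue classes mod~$a_2n_0$, and \emph{this} is the~$\mathbf q$ of Theorem~\ref{thm:quintilin}. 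The gcd~$q_0=(q_1,q_2)$ is handled separately, by localising it to be~$\leq x^\delta$ via a trivial bound. Consequently your size assignments are off: one has~$\mathbf C,\mathbf D\asymp Q/q_0$ (not~$Q^{1/2}$), and~$\mathbf R,\mathbf S$ are built from the~$n$-variables (of size~$\asymp N$), not from~$q$. With the wrong assignments your endpoint check on~$K(C,D,N,R,S)^2$ is not meaningful; in the paper's variables the condition~$N\leq Q^{2/3-\eta}$ enters through the balance of~$Q^2N^4$ against~$Q^3N^{5/2}$ after the bound~$\mathcal B\ll (QN)^2\{N^2+Q\sqrt N\}$, not through the term you single out.
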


\begin{proof}[that Proposition~\ref{prop:distrib-convo} implies Theorem~\ref{thm:distrib-convo}]
We replace the sharp cutoff~$Q<q\leq 2Q$ by a smooth weight~$\gamma(q)$ such that
$$ \bfUn_{q\in(Q, 2Q]} \leq \gamma(q) \leq \bfUn_{q\in(Q(1-Q^{-10\delta}), 2Q(1+Q^{-10\delta})]}. $$
We can pick~$\gamma$ such that~$\|\gamma^{(j)}\|_\infty \ll_j Q^{-j+10\delta j}$ for all fixed~$j\geq 0$. The error term in this procedure comes from the contribution of those integers~$q$ at the transition range~$2Q<q\leq 2Q(1+Q^{-10\delta})$ and~$Q(1-Q^{-10\delta})\leq q\leq Q$. It is bounded by the triangle inequality, using our trivial bound~\eqref{eq:tv-cuR} and following the reasonning of~\cite[page~219 and~240]{BFI}, choosing~$Q_0 = x^{10\delta}$ there. We obtain
\begin{equation}
\ssum{q \\ (q, a_1a_2)=1}(\bfUn_{Q<q\leq 2Q} - \gamma(q)) \ssum{n, m \\ (n, a_2)=1} \alpha_m \beta_n \cu_R(mn\bar{a_1}a_2 ; q) \ll
x R (\log x)^{O(1)} Q^{-10\delta} .\label{eq:transfer-smooth-q}
\end{equation}
Given our hypotheses~$R\leq x^\delta$ and~$Q\geq x^{1/4}$, this is an acceptable error term.

Let~$\cK$ denote the set of squareful numbers:
$$ \cK = \{k\in\bfN :\ p|k \Rightarrow p^2|k \}. $$
Factor each integer~$n$ as~$n = n'k$ with~$\mu(n')^2 = 1$,~$(n', k)=1$ and~$k\in\cK$, so that~$k\leq x^{1/3}$ and~$(k, a_2)=1$. Here~$\mu$ is the Möbius function. There are only~$O(K^{1/2})$ squareful numbers up to~$K$~\cite{ErdosSzekeres}, therefore
$$ \ssum{k\geq K \\ k\in\cK} \frac1k \ll K^{-1/2} \qquad (K\geq 1). $$
Proceeding as in~\cite[Section~V.2]{FouvryTit} and using the trivial bound~\eqref{eq:tv-cuR}, we deduce for any~$K\geq 1$,
\begin{equation}
\begin{aligned}
&\ssum{q \\ (q, a_1a_2)=1} \gamma(q) \ssum{n, m \\ (n, a_2)=1} \alpha_m \beta_n \cu_R(mn\bar{a_1}a_2 ; q) \\
= &\ \ssum{k \leq K \\ k\in\cK \\ (k, a_2)=1} \ssum{q \\ (q, a_1a_2)=1} \gamma(q) \ssum{n, m \\ (n, ka_2)=1} \alpha_m \mu(n)^2\beta_{kn} \cu_R(mnk\bar{a_1}a_2 ; q) \\
&\hspace{5em} + O(Rx(\log x)^{O(1)}K^{-1/2}).
\end{aligned}\label{eq:convo-reduc2}
\end{equation}
We are left to analyze, for~$k\in\cK$, $k\leq K$, $(k, a_2)=1$, the sum
$$ \ssum{q \\ (q, a_1a_2)=1} \gamma(q) \ssum{n, m \\ (n, ka_2)=1} \alpha_m \beta_{kn}\mu(n)^2\cu_R(mn\bar{a_1}ka_2 ; q) .$$
Assume~$K\leq x^{4\delta}$. For each fixed~$k$, the sequences~$(\alpha_m)_m$ and~$(k^{-\delta}\mu(n)^2\beta_{kn})_n$ are supported in~$m\in(M, 2M]$ and~$n\in(N/k, 2N/k]$, respectively. We apply Proposition~\ref{prop:distrib-convo} with~$\eta$ replaced by~$\eta/2$,~$N$ replaced by~$N/k$ and~$a_2$ replaced by~$ka_2$ (the factor~$k^{-\delta}$ ensures that the condition~\eqref{eq:cond-taille} holds for~$(k^{-\delta}\mu(n)^2\beta_{kn})_n$). If~$\delta$ is small enough in terms of~$\eta$, we obtain, uniformly for~$k\leq K$,
$$ \ssum{q \\ (q, a_1a_2)=1} \gamma(q) \ssum{n, m \\ (n, ka_2)=1} \alpha_m \beta_{kn}\mu(n)^2 \cu_R(mn\bar{a_1}ka_2 ; q) \ll k^{-1+\delta}x (\log x)^{O(1)}R^{-1}. $$
Note that the sum~$\sum_{k\in\cK} k^{-1+\delta}$ converges. Inserting in~\eqref{eq:convo-reduc2}, we obtain
\begin{align*}
\ssum{q \\ (q, a_1a_2)=1} \gamma(q) \ssum{n, m \\ (n, a_2)=1} \alpha_m \beta_n \cu_R(mn\bar{a_1}a_2 ; q) \ll x(\log x)^{O(1)}(R^{-1} + RK^{-1/2})
\end{align*}
and so we conclude by the choice~$K = R^4$.
\end{proof}

\subsection{Applying the dispersion method}

Let us prove Proposition~\ref{prop:distrib-convo}. Recall that the sequence~$(\beta_n)$ is assumed to be supported on squarefree integers. Let~$\cD$ denote the left-hand side of~\eqref{eq:majo-distrib-convo}. By the triangle inequality
\begin{equation}
|\cD| = \Big|\sum_{(q, a_1a_2)=1 }\gamma(q) \ssum{m, n \\ (n, a_2)=1} \alpha_m \beta_n \cu_R(mn\bar{a_1}a_2 ; q) \Big| \leq \sum_m \Big( |\alpha_m| \Big|\sum_q \sum_n \Big| \Big).\label{eq:predisp}
\end{equation}
Define a smooth and non-negative function~$\alpha(m)$ (not to be confused with our sequence~$\alpha_m$), with~$\alpha(m)\geq 1$ for~$M<m\leq2M$, supported inside~$[M/2, 3M]$ and such that~$\|\alpha^{(j)}\|_\infty\ll_j M^{-j}$. Note that~$|\alpha_m| \leq \tau(m)^A \alpha(m)$ by the hypothesis~\eqref{eq:cond-taille}. Therefore, the Cauchy--Schwarz inequality yields
\begin{align*}
|\cD| {}&\ll \Big(\sum_m \alpha(m)\tau(m)^A\Big)^{1/2} \Big( \sum_m \alpha(m) \Big|\sum_q \sum_n \Big| \Big)^{1/2} \\
{}& \ll (\log x)^{O(1)} M^{1/2} \big(\cS_1 - 2\Re \cS_2 + \cS_3\big)^{1/2}\numberthis\label{eq:dispersion}
\end{align*}
where
$$ \cS_1 = \sum_{(q_1q_2, a_1a_2)=1} \gamma(q_1)\gamma(q_2) \ssum{n_1, n_2\\(n_1n_2, a_2)=1} \beta_{n_1}\overline{\beta_{n_2}} \ssum{mn_1\equiv a_1\bar{a_2}\mod{q_1} \\ mn_2\equiv a_1\bar{a_2} \mod{q_2}} \alpha(m) $$
and~$\cS_2$ and~$\cS_3$ are defined similarly, replacing the sum over~$m$ by
$$ \frac1{\vphi(q_2)}\ssum{\chi_2\in\cX_{q_2}(R)}\chi(n_2\bar{a_1}a_2) \ssum{mn_1\equiv a_1\bar{a_2} \mod{q_1}} \alpha(m)\chi_2(m), $$
$$ \frac1{\vphi(q_1)\vphi(q_2)}\sum_{\chi_1\in\cX_{q_1}(R)}\sum_{\chi_2\in\cX_{q_2}(R)}\chi_1(n_1\bar{a_1}a_2)\bar{\chi_2(n_2\bar{a_1}a_2)} \ssum{(mn_1, q_1)=1 \\ (mn_2, q_2)=1} \alpha(m)\chi_1\bar{\chi_2}(m) $$
respectively. We will prove
\begin{equation}
\cS_1 - 2\Re \cS_2 + \cS_3 = O((\log x)^{O(1)}MN^2R^{-2}).\label{eq:obj-dispersion}
\end{equation}

\subsubsection[Evaluation of S3]{Evaluation of~$\cS_3$}
The term~$\cS_3$ is defined by
\begin{equation}
\cS_3 = \sum_{(q_1q_2, a_1a_2)=1}\frac{\gamma(q_1)\gamma(q_2)}{\vphi(q_1)\vphi(q_2)}\ssum{\chi_1\in\cX_{q_1}(R) \\ \chi_2\in\cX_{q_2}(R)}\ssum{n_1, n_2 \\ (n_j, q_ja_2)=1} \beta_{n_1}\bar{\beta_{n_2}} \sum_{(m, q_1q_2)=1}\alpha(m)\chi_1(mn_1\bar{a_1}a_2)\bar{\chi_2(mn_2\bar{a_1}a_2)}.\label{eq:def-S3}
\end{equation}
Let~$W := [q_1, q_2]$ and~$H := W^{1+\ee}/M$. By Poisson summation (Lemma~\ref{lemme:poisson}),
\begin{align*}
\ \sum_m \alpha(m) \chi_1\bar{\chi_2}(m)\ &= \frac{\hat{\alpha}(0)}{W}\ssum{b\mod{W}^\times}\chi_1\bar{\chi_2}(b)  \\
& + \frac1W\sum_{0<|h|\leq H}\hat{\alpha}\Big(\frac hW\Big)\ssum{b\mod{W}^\times}\e\Big(\frac{bh}{W}\Big)\chi_1\bar{\chi_2}(b) + O_\ee(W^\ee).
\end{align*}
The conductor of~$\chi_1\bar{\chi_2}$ is at most~$R$, so that~\cite[Lemma~3.2]{IK}\footnote{Note that in Lemma~3.2 of~\cite{IK},~$\tau(\chi)$ should read~$\tau(\chi^*)$ and an additional factor~$\chi^*(m/(dm^*))$ should appear in the summand.} yields
$$ \ssum{b\mod{W}^\times}\e\Big(\frac{bh}{W}\Big)\chi_1\bar{\chi_2}(b) \ll R^{1/2}\sum_{d|(h, W)}d. $$
We deduce
$$ \sum_m \alpha(m) \chi_1\bar{\chi_2}(m) = \frac{\hat{\alpha}(0)}{W}\ssum{b\mod{W}^\times}\chi_1\bar{\chi_2}(b) + O_\ee(W^\ee R^{1/2}). $$
The error term is~$O(x^{\delta})$ while the trivial bound is~$M\geq x^{2/3}$. We deduce
$$ \cS_3 = \hat{\alpha}(0)X_3 + O(MN^2x^{-1/2}), $$
where, having changed~$b$ to~$ba_1\bar{a_2}$,
$$ X_3 := \ssum{q_1, q_2 \\(q_1q_2, a_1a_2)=1}\frac{\gamma(q_1)\gamma(q_2)}{[q_1, q_2]\vphi(q_1)\vphi(q_2)}\ssum{\chi_1\in\cX_{q_1}(R) \\ \chi_2\in\cX_{q_2}(R)}\ssum{n_1, n_2 \\ (n_j, q_ja_2)=1} \beta_{n_1}\bar{\beta_{n_2}}\ssum{b\mod{W}^\times} \chi_1(bn_1)\bar{\chi_2(bn_2)}. $$
By orthogonality,
$$ \sum_{b\mod{W}^\times} \chi_1\bar{\chi_2}(b) = \vphi(W)\bfUn_{\chi_1 \sim \chi_2} $$
where by~$\chi_1\sim\chi_2$ we mean that~$\chi_1$ and~$\chi_2$ are induced by the same primitive character -- which necessarily has conductor dividing~$(q_1, q_2)$. Therefore,
$$ \ssum{\chi_1\in\cX_{q_1}(R) \\ \chi_2\in\cX_{q_2}(R)} \chi_1(n_1)\bar{\chi_2(n_2)}\bfUn_{\chi_1\sim\chi_2} = \sum_{\chi_0\in\cX_{(q_1, q_2)}(R)} \chi_0(n_1\bar{n_2}). $$
Since~$\vphi([q_1, q_2])=\vphi(q_1)\vphi(q_2)/\vphi((q_1, q_2))$, we deduce
\begin{equation}
X_3 = \sum_{(q_1q_2, a_1a_2)=1}\frac{\gamma(q_1)\gamma(q_2)}{[q_1, q_2]\vphi((q_1, q_2))}\ssum{\chi_0\in\cX_{(q_1, q_2)}(R)}\ssum{n_1, n_2 \\ (n_j, q_ja_2)=1} \beta_{n_1}\bar{\beta_{n_2}}\chi_0(n_1\bar{n_2}).\label{eq:def-X3}
\end{equation}

\subsubsection[Evaluation of S2]{Evaluation of~$\cS_2$}

The term~$\cS_2$ is defined by
\begin{equation}
\cS_2 = \sum_{(q_1q_2,a_1a_2)=1}\frac{\gamma(q_1)\gamma(q_2)}{\vphi(q_2)}\ssum{n_1, n_2\\(n_j, q_ja_2)=1} \beta_{n_1}\bar{\beta_{n_2}} \sum_{\chi_2\in\cX_{q_2}(R)} \sum_{m\equiv a_1\bar{a_2n_1}\mod{q_1}} \alpha(m)\chi_2(mn_2\bar{a_1}a_2).\label{eq:def-S2-uncond}
\end{equation}
As before, let~$W = [q_1, q_2]$ and~$H=W^{1+\ee}/M$. By Poisson summation,
\begin{equation}
\sum_{m\equiv a_1\bar{a_2n_1}\mod{q_1}} \alpha(m)\chi_2(m) = \frac{\hat{\alpha}(0)}W\ssum{b\mod{W}^\times \\ b\equiv a_1\bar{a_2n_1} \mod{q_1}} \chi_2(b) + O_\ee\Big(\cR_2 + W^\ee \Big),\label{eq:S2-Poisson-uncond}
\end{equation}
where
\begin{equation}
\cR_2 := \frac{M}{W}\sum_{0<|h|\leq H}\Big|\ssum{b\mod{W}^\times \\ b\equiv a_1\bar{a_2n_1}\mod{q_1}} \chi_2(b)\e\Big(\frac{bh}{W}\Big)\Big|.\label{eq:def-R2}
\end{equation}
We wish to express the sum over~$b$ as a complete sum over residues. We write~$W = [q_1, q_2] = q_1'q_2'$, where~$(q_2', q_1)=1$ and~$q_1'|q_1^\infty$ (meaning that~$p|q_1' \Rightarrow p| q_1$). Note that then~$q_1|q_1'$ and~$(q_1', q_2')=1$. Let
$$ \psi : (\bfZ/q_1'\bfZ) \times (\bfZ/q_2'\bfZ) \longrightarrow (\bfZ/W\bfZ) $$
denote the canonical ring isomorphism (so~$\psi^{-1}$ is the projection map). Note that
$$ b_2 \mapsto \chi_2(\psi(1, b_2)) $$
defines a character~$\mod{q_2'}$ of conductor at most~$R$. Finally, we have
$$ \frac1W \equiv \frac{\bar{q_1'}}{q_2'} + \frac{\bar{q_2'}}{q_1'} \mod{1}. $$
The sum over~$b$ in~\eqref{eq:def-R2} is in absolute values at most
\begin{equation}
\ssum{b_1 \mod{q_1'}^\times \\ b_1 \equiv a_1\bar{a_2}\bar{n_1}\mod{q_1}}\Big| \sum_{b_2\mod{q_2'}^\times} \chi_2(\psi(1, b_2))\e\Big(\frac{b_2h\bar{q_1'}}{q_2'}\Big)\Big|\label{eq:somme-b1b2}
\end{equation}
since~$\psi(b_1, b_2)\equiv b_1 \mod{q_1}$, and by factoring
$$ \chi_2(\psi(b_1, b_2)) = \chi_2(\psi(b_1, 1)) \chi_2(\psi(1, b_2)). $$
The sum over~$b_2$ in~\eqref{eq:somme-b1b2} is a Gauss sum; by~\cite[Lemma~3.2]{IK},
\begin{equation}
\Big| \sum_{b_2\mod{q_2'}^\times} \chi_2(\psi(1, b_2))\e\Big(\frac{b_2h\bar{q_1'}}{q_2'}\Big)\Big| \leq R^{1/2}\sum_{d|(h, q_2')} d .\label{eq:contrib-b2}
\end{equation}
Note that
\begin{equation}
\ssum{b_1 \mod{q_1'}^\times \\ b_1 \equiv a_1\bar{a_2n_1}\mod{q_1}} 1 = \frac{\vphi(q_1')}{\vphi(q_1)} = (q_2, q_1^\infty)\label{eq:contrib-b1}
\end{equation}
which is a shorthand for~$\prod_{p^\nu||q_2,\ p|q_1} p^\nu$. Multiplying~\eqref{eq:contrib-b2} with~\eqref{eq:contrib-b1} and summing over~$h$, we obtain
$$ \cR_2 \ll_\ee W^\ee \tau(q_2) (q_2, q_1^\infty) R^{1/2} .$$
Inserting this estimate into~\eqref{eq:S2-Poisson-uncond} then~\eqref{eq:def-S2-uncond}, the error term contributes
$$ \ll_\ee R^{3/2}N^2 W^\ee \sum_{q_1, q_2 \asymp Q} \frac{\tau(q_2)(q_2, q_1^\infty)}{q_2} \ll x^{3\delta/2+\ee}N^2Q. $$
In the last inequality we used standard facts about the kernel function~$k(n)=\prod_{p|n}p$, for which we refer to~\cite{deBruijn}. The error term above is acceptable, since
$$ x^{3\delta/2}Q\leq x^{1/2+3\delta} \leq x^{2/3-2\delta}\leq MR^{-2}x^{-\delta} $$
if~$\delta$ is small enough. We therefore have
$$ \cS_2 = \hat{\alpha}(0)X_2 + O(MN^2R^{-2}) $$
with (having changed~$b$ into~$ba_1\bar{a_2}$)
$$ X_2 = \sum_{(q_1q_2, a_1a_2)=1}\frac{\gamma(q_1)\gamma(q_2)}{[q_1, q_2]\vphi(q_2)} \ssum{n_1, n_2 \\ (n_j, q_ja_2)=1} \beta_{n_1}\bar{\beta_{n_2}} \sum_{\chi_2\in\cX_{q_2}(R)} \chi_2(n_2) \ssum{b\mod{W}^\times \\ b\equiv \bar{n_1} \mod{q_1}} \chi_2(b). $$
Fix~$\chi_2\in\cX_{q_2}(R)$ and let~$\chit_2\mod{\qt_2}$ be the primitive character inducing~$\chi_2$. If~$S$ denotes the sum over~$b$ above, then~$S=\chi_2(c)S$ for any~$c\mod{W}^\times$, $c\equiv 1\mod{q_1}$. Thus~$S=0$ if~$\chi_2$ is not~$q_1$-periodic, that is, if $\qt_2\nmid(q_1, q_2)$. If on the contrary~$\qt_2|(q_1, q_2)$, then~$S=\chit_2(\bar{n_1})\vphi(W)/\vphi(q_1)=\chit_2(\bar{n_1})\vphi(q_2)/\vphi((q_1, q_2))$. We therefore find
$$ \ssum{b\mod{W}^\times \\ b\equiv \bar{n_1}\mod{q_1}} \chi_2(b) = \frac{\vphi(q_2)}{\vphi((q_1, q_2))}\bfUn_{\qt_2|(q_1, q_2)}\chit_2(\bar{n_1}). $$
Summing over~$\chi_2\in\cX_{q_2}(R)$ and since~$(n_1n_2, (q_1, q_2))=1$, we obtain
$$ \sum_{\chi_2\in\cX_{q_2}(R)} \chi_2(n_2) \ssum{b\mod{W}^\times \\ b\equiv \bar{n_1}\mod{q_1}} \chi_2(b) = \frac{\vphi(q_2)}{\vphi((q_1, q_2))}\sum_{\chi_0\in\cX_{(q_1, q_2)}(R)}\chi_0(\bar{n_1}n_2), $$
and so~$X_2 = X_3$.

\subsection{Second reduction}

We now wish to evaluate
$$ \cS_1 := \sum_{(q_1q_2, a_1a_2)=1}\gamma(q_1)\gamma(q_2) \ssum{n_1, n_2 \\ (n_j, q_ja_2)=1 \\ n_1\equiv n_2 \mod{(q_1, q_2)}} \beta_{n_1}\bar{\beta_{n_2}} \ssum{m\equiv a_1\bar{a_2n_1} \mod{q_1} \\ m\equiv a_1\bar{a_2n_2} \mod{q_2}} \alpha(m). $$
The expected main term is~$\hat{\alpha(0)}X_1$, where
\begin{equation}
X_1 := \sum_{(q_1q_2, a_1a_2)=1}\frac{\gamma(q_1)\gamma(q_2)}{[q_1, q_2]} \ssum{n_1, n_2 \\ (n_j, q_ja_2)=1 \\ n_1\equiv n_2 \mod{(q_1, q_2)}} \beta_{n_1}\bar{\beta_{n_2}}.\label{eq:def-X1}
\end{equation}

For all integers~$q_0$, $n_0$ with~$(n_0, q_0)=1$, let~$\cS_1(q_0, n_0)$ denote the contribution to~$\cS_1$ of those integers satisfying~$(q_1, q_2)=q_0$ and~$(n_1, n_2)=n_0$. Then we have
\begin{align*}
|\cS_1(q_0, n_0)| \ll_\ee &\ x^\ee \ssum{q_1, q_2 \asymp Q/q_0 \\ (q_0q_2, a_2n_0)=1} \ssum{n_1, n_2 \asymp N/n_0 \\ n_1\equiv n_2\mod{q_0} \\ (n_2, q_0q_2)=1} \ssum{a_2n_0 n_2 m\equiv a_1\mod{q_0q_2} \\ q_1|ma_2n_0n_1-a_1} \alpha(m) \\
\ll_\ee &\ x^\ee \ssum{q_2 \asymp Q/n_0 \\(q_0q_2, a_2n_0)=1} \ssum{n_1, n_2 \asymp N/n_0 \\ n_1\equiv n_2\mod{q_0} \\ (n_2, q_0q_2)=1} \sum_{ma_2n_0 n_2 \equiv a_1\mod{q_0q_2}} \alpha(m)\tau(|ma_2n_0n_1-a_1|) \\
\ll_\ee &\ x^\ee\Big\{ \frac{MN^2}{n_0^2q_0^2} + \frac{MN}{n_0q_0} \Big\}
\end{align*}
where we used our hypotheses on~$M$ and~$|a_1|$ to justify that~$m|a_1$ cannot be satisfied. Therefore, for some~$\delta>0$ and all~$1\leq K\leq x^\delta$, we have
$$ \ssum{(q_0, n_0)= 1 \\ \max\{q_0, n_0\} > K} |\cS_1(q_0, n_0)| \ll_\ee x^\ee MN^2 K^{-1}. $$
Similarly, if~$X_1(q_0, n_0)$ denotes the contribution to~$X_1$ of indices with~$(q_1, q_2)=q_0$ and~$(n_1, n_2)=n_0$, we have
$$ \ssum{(q_0, n_0)=1 \\ \max\{q_0, n_0\} > K} |X_1(q_0, n_0)| \ll_\ee x^\ee \ssum{(q_0, n_0)=1 \\ \max\{q_0, n_0\} > K} \frac{N}{q_0n_0}\Big(\frac{N}{q_0n_0}+1\Big) \ll_\ee x^\ee N^2 K^{-1}. $$
By choosing~$K$ appropriately, it will therefore suffice to show that
$$ \cS_1(q_0, n_0) = \hat{\alpha}(0)X_1(q_0, n_0) + O(MN^2x^{-\delta}) \qquad (q_0, n_0 \leq x^\delta). $$

\subsection[Evaluation of S1]{Evaluation of~$\cS_1(q_0, n_0)$}

Let the integers~$q_0$, $n_0$ be coprime, at most~$x^\delta$, such that~$(q_0, a_1a_2)=(n_0, a_2)=1$. Let us rename~$q_1$ into~$q_0q_1$ and~$q_2$ into~$q_0q_2$, and similarly for~$n_1$ and~$n_2$. We wish to evaluate
$$ \cS_1(q_0, n_0) = \ssum{q_1, q_2 \\ (q_1q_2, a_1a_2) = (q_1, q_2) = 1} \gamma(q_0q_1)\gamma(q_0 q_2)\ssum{n_1, n_2 \\ (n_0n_j, q_0q_ja_2) = 1 \\ (n_1, n_2) = 1 \\ n_1\equiv n_2 \mod{q_0}} \beta_{n_0n_1}\bar{\beta_{n_0n_2}} \sum_{m\equiv a_1\bar{a_2n_0n_j}\mod{q_0q_j}} \alpha(m). $$
Using Poisson summation, we have
$$ \cS_1(q_0, n_0) = \hat{\alpha(0)}X_1(q_0, n_0) + \cR_1 + O_\ee(x^\ee\cR_2) $$
where, having put~$W = q_0q_1q_2$ and~$H := W^{1+\ee}M^{-1}$,
$$ \cR_1 = \underset{(q_1q_2, a_1a_2)=(q_1, q_2)=1}{\sum_{q_1, q_2}} \underset{\substack{(n_0n_j, q_0q_ja_2)=1 \\ (n_1, n_2)=1 \\ n_1\equiv n_2 \mod{q_0}}}{\sum_{n_1, n_2}} \gamma(q_0q_1)\gamma(q_0q_2) \beta_{n_0n_1}\bar{\beta_{n_0n_2}}  \sum_{0<|h|\leq H} \frac1W\hat{\alpha}\big(\frac{h}{W}\big)\e\Big(\frac{h\mu}{W}\Big), $$
$$ \cR_2 = {\sum_{q_1, q_2 \asymp Q/q_0}} \sum_{n_1, n_2 \asymp N/n_0} \frac1W \ll Q^\ee N^2, $$
and the residue class~$\mu\mod{W}$ satisfies
$$ \mu\equiv a_1\bar{a_2 n_0 n_j}\mod{q_0q_j} \qquad (j\in\{1, 2\}). $$
We seek an error term~$O(MN^2x^{-\delta})$. The contribution of~$\cR_2$ is acceptable.

We now focus on~$\cR_1$. Recall that~$\beta_n$ is non-zero only when~$n$ is squarefree (so that~$(n_0, n_1)=1$). We have the equality modulo~$1$
\begin{equation}
\frac{\mu}{q_0q_1q_2} \equiv \frac{a_1}{q_0q_1q_2a_2n_0n_1} + a_1\frac{n_1-n_2}{q_0}\frac{\bar{q_1a_2n_0n_2}}{n_1q_2} - a_1\frac{\bar{q_0q_1q_2n_1}}{a_2n_0} \mod{1}. \label{eq:mod-1}
\end{equation}
This is found following the steps in~\cite[p.208]{FI}, but can also be more easily verified by multiplying each side by~$q_0q_1q_2a_2n_0n_1$, and checking the resulting congruence modulo~$a_2n_0$, $n_1q_0$, $q_0q_1$ and~$q_0q_2$ respectively. Taking the exponential, we may approximate
$$ \e\big(\frac{ha_1}{q_0q_1q_2a_2n_0n_1}\big) = 1 + O\big(\frac{|h a_1|}{q_0q_1q_2|a_2|n_0n_1}\big). $$ Inserting in~$\cR_1$, the error term contributes a quantity
$$ \ll \frac{|a_1|q_0H}{|a_2|n_0Q^2N} \frac{Q^2}{q_0^2} \frac{N^2}{n_0} \ll x^\ee |a_1|NQ^2M^{-1} $$
which is clearly acceptable. We therefore evaluate
$$ \cR_1' := \sum_{q_1, q_2, n_1, n_2}\frac{\gamma(q_0q_1)\gamma(q_0q_1)}{q_0q_1q_2}\beta_{n_0n_1}\bar{\beta_{n_0n_2}}\hat{\alpha}\Big(\frac h{q_0q_1q_2}\Big) \e\Big(a_1h\frac{n_1-n_2}{q_0}\frac{\bar{q_1a_2n_0n_2}}{n_1q_2} - a_1h\frac{\bar{q_0q_1q_2n_1}}{a_2n_0}\Big). $$
Now we insert the definition of~$\hat{\alpha}$ as
$$ {\hat \alpha}\big(\frac{h}{q_0q_1q_2}\big) = q_0q_1q_2\int_\bfR \alpha(q_0q_1q_2\xi)\e(-h\xi)\dd \xi, $$
we detect the condition~$(a_1, q_1q_2)=1$ by Möbius inversion, and we split the sums over~$q_1$, $q_2$ into congruence classes modulo~$n_0a_2$. We obtain
\begin{equation}
|\cR_1'| \ll_\ee x^\ee (n_0|a_2|)^2 \frac{Mq_0}{Q^2} \sup_{\xi \asymp Mq_0/Q^2}\sup_{\substack{\delta_1, \delta_2 | a_1 \\ (\delta_1, \delta_2)=1 \\ (\delta_1\delta_2, n_0a_2)=1}} \sup_{\substack{\lambda_1, \lambda_2 \mod{n_0a_2}^\times}} \cR_1''\label{eq:majo-R1p}
\end{equation}
where
\begin{align*}
\cR_1'' := &\ \ssum{q_1, q_2 \\ (\delta_1q_1, \delta_2q_2)=1 \\ q_j \equiv \lambda_j\bar{\delta_j}\mod{n_0a_2}} \gamma(q_0\delta_1q_1) \gamma(q_0\delta_2q_2)
\ssum{n_1, n_2 \\ (n_0n_j, q_0\delta_jq_ja_2)=1 \\ (n_1, n_2)=1 \\ n_1\equiv n_2 \mod{q_0}} \beta_{n_0n_1}\bar{\beta_{n_0n_2}} \times \\
&\ \times \sum_{0<|h|\leq H} \alpha(\xi q_0\delta_1\delta_2q_1q_2)\e\big({-}\xi h - a_1h\frac{\bar{q_0\lambda_1\lambda_2n_1}}{a_2n_0}\big) \e\Big(a_1h\frac{n_1-n_2}{q_0}\frac{\bar{a_2n_0n_2\delta_1q_1}}{n_1\delta_2q_2}\Big).
\end{align*}
We write~$\cR_1''$ in the form~\eqref{eq:majo-quintilin}, with
\begin{equation}
{\bf c} \gets q_2, \quad \mathbf{d}\gets q_1, \quad \mathbf{n}\gets a_1h\frac{n_1-n_2}{q_0}, \quad \mathbf{r}\gets a_2n_0n_2\delta_1, \quad \mathbf{s}\gets n_1\delta_2,  \quad \mathbf{q}\gets n_0a_2,\label{eq:substitution-R1}
\end{equation}
$$ \mathbf{C} \gets \frac{Q}{q_0\delta_2}, \quad \mathbf{D} \gets \frac{Q}{q_0\delta_1}, \quad \mathbf{N} \gets \frac{|a_1|HN}{q_0n_0} \quad \mathbf{R} \gets a_2\delta_1 N, \quad \mathbf{S} \gets \frac{N\delta_2}{n_0}. $$
Here bold letters denote the ``new'' summation variables in~\eqref{eq:majo-quintilin}. The analogue of the sequence~$b_{\mathbf{n},\mathbf{r},\mathbf{s}}$ is defined through
$$ b_{{\bf n}, {\bf r}, {\bf s}} = \underset{\substack{(n_0n_j, q_0\delta_ja_2)=1 \\ (n_1, n_2)=1 \\ n_1 \equiv n_2\mod{q_0} \\ {\bf r} = a_2n_0n_2\delta_1 \\ {\bf s} = n_1\delta_2}}{\sum_{n_1} \sum_{n_2}} \beta_{n_0n_1}\bar{\beta_{n_0n_2}} \ssum{0<|h|\leq H \\ q_0{\bf n} = a_1 h (n_1-n_2)} \e\Big({-}\xi h - a_1h\frac{\bar{q_0\lambda_1\lambda_2n_1}}{a_2n_0}\Big). $$
Note that this has at most one term since the case~$n_1=n_2$ is prohibited by the conditions~$(n_1, n_2)=1$ and~$N\geq x^\eta$. Note also that it is void unless~$({\bf r}, {\bf s})=1$ (here we use the fact that~$\beta$ is supported on squarefree integers). The quantity~$g({\bf c}, {\bf d}, {\bf n}, {\bf r}, {\bf s})$ in~\eqref{eq:majo-quintilin} is
$$ \gamma(q_0\delta_1{\bf d})\gamma(q_0\delta_2{\bf c}) \alpha(\xi q_0\delta_1\delta_2{\bf c} {\bf d}). $$
The derivative conditions~\eqref{eq:cond-derg-quintilin} are satisfied with~$\ee_0=B\delta$, by virtue of our hypothesis on~$\gamma$. Note that the congruence and coprimality conditions on~$q_1$ and~$q_2$ translate exactly into
$$ {\bf c} \equiv \lambda_2 \bar{\delta_2} \mod{{\bf q}}, \quad {\bf d}\equiv \lambda_1 \bar{\delta_1}\mod{{\bf q}}, \quad ({\bf d}, {\bf c s})=({\bf c}, {\bf r})=1. $$
At this point, we are in a situation analogous to~\cite[formula~(13.2)]{BFI}. Applying Theorem~\ref{thm:quintilin} and estimating the resulting expression as in~\cite[page~241]{BFI}, we obtain
$$ \cR_1'' \ll x^{O(\delta)} \cA^{1/2}\cB^{1/2}, $$
where~$\cA \ll HN^2$ is the contribution coming from~$\|b_{N,R,S}\|_2^2$ in~\eqref{eq:majo-quintilin}, and
$$ \cB \ll Q^2N^2N(H+N) + Q^3N^2\sqrt{H+N} + Q^2HN \ll (QN)^2\{N(H+N) + Q\sqrt{H+N}\}. $$
We have~$H \ll x^{O(\delta)}N$, so that~$\cB\ll Q^2N^2x^{O(\delta)}(N^2+Q\sqrt{N})$ (compare with~\cite[formula~(13.4)]{BFI}). Inserting in~\eqref{eq:majo-R1p}, we obtain
$$ \cR_1' \ll x^{O(\delta)}MN^2(Q^{-1}N^{3/2} + Q^{-1/2}N^{3/4}) \ll x^{-\eta/2+O(\delta)}MN^2 $$
by the hypothesis~$N\leq Q^{2/3-\eta}$. Taking~$\delta$ sufficiently small in terms of~$\eta$, we have the required bound~$O(MN^2x^{-\delta})$.

\subsection{The main terms}

The main terms~$X_1$ and~$X_3$ defined in~\eqref{eq:def-X1} and~\eqref{eq:def-X3} are real numbers. They combine to form
$$ X_1 - X_3 = \sum_{(q_1q_2, a_1a_2)=1} \frac{\gamma(q_1)\gamma(q_2)}{[q_1, q_2]} \ssum{n_1, n_2 \\ (n_j, q_ja_2)=1} \bar{\beta_{n_1}}\beta_{n_2}\cu_R(n_1\bar{n_2} ; (q_1, q_2)). $$
Notice the summands are zero unless~$(q_1, q_2)>R$. We use Möbius inversion
$$ \bfUn_{(n_j, q_j)=1} = \sum_{d_j|(q_j, n_j)} \mu(d_j) $$
to detect the conditions~$(n_j, q_j)=1$, in order to separate the sums over~$n_1$,~$n_2$ from those over~$q_1$,~$q_2$. We insert the definition of~$\cu_R$ in the form
$$ \cu_R(n_1\bar{n_2} ; (q_1, q_2)) = \frac1{\vphi((q_1, q_2))}\ssum{\chi\prim \\ \cond(\chi)>R \\ \cond(\chi) | (q_1, q_2)} \chi(\bar{n_1})\chi(n_2). $$
We can assume~$(d_j, \cond(\chi))=1$ because of the factors~$\chi(n_j)$. Quoting from~\cite[Theorem~I.5.4]{Tenenbaum-Intro} the bound~$\vphi(q)\gg q/\log\log q$, we obtain
$$ X_1 - X_3 \ll (\log \log x) \sum_{R<r\leq Q} \ssum{d_1, d_2 \\ d_j \ll Q/r} \Big( \ssum{q_1, q_2 \\ q_j\asymp Q \\ rd_j | q_j}\frac1{q_1q_2} \Big) \ssum{\chi\prim \\ \chi\mod{r}} \prod_{j=1}^2 \Big|\sum_{(n, a_2)=1} \beta_{d_j n}\chi(n)\Big|. $$
The sum over~$q_1$,~$q_2$ is~$O(1/(r^2d_1d_2))$. By Cauchy--Schwarz, and the symmetry between~$n_1$ and~$n_2$, we obtain
$$ X_1 - X_3 \ll (\log x)^2 \ssum{d \ll N}\frac1{d} \sum_{R<r\leq Q}\frac1{r^2} \ssum{\chi\prim \\ \chi\mod{r}} \Big|\sum_{(n, a_2)=1} \beta_{d n}\chi(n)\Big|^2. $$
For all~$t>R$, the multiplicative large sieve inequality (Lemma~\ref{lemme:GC}) and our hypothesis~\eqref{eq:cond-taille} yields
$$ G(t) := \sum_{R<r \leq t}  \ssum{\chi\prim \\ \chi\mod{r}} \Big|\sum_{(n, a_2)=1} \beta_{d n}\chi(n)\Big|^2 \ll (\log x)^{O(1)}\tau(d)^{2A}(t^2 + N)N $$
after ignoring denominators~$d$. We obtain by partial summation
$$ X_1 - X_3 \ll (\log x)^2 \ssum{d \ll N} \frac1{d} \Big(\frac{G(Q)}{Q^2} + \int_R^Q \frac{G(t)}{t^3}\dd t \Big)
\ll (\log x)^{O(1)}(N + N^2R^{-2}). $$
By hypothesis~$R\leq x^{\delta}$, so we have the desired bound~$X_1 - X_3 \ll N^2R^{-2}(\log x)^{O(1)}$. Given~$\hat{\alpha}(0) \ll M$, our claimed estimate~\eqref{eq:obj-dispersion} is proved, and therefore Proposition~\ref{prop:distrib-convo} as well.

\section{Application to the Titchmarsh divisor problem}\label{sec:appl-titchm-divis}

The aim of this section is to justify Theorems~\ref{thm:TM-cond} and~\ref{thm:TM-uncond}. Recall the definition
$$ T(x) := \sum_{1<n\leq x} \Lambda(n)\tau(n-1) .$$
We let
$$ \psi(x ; q, a) := \ssum{n\leq x\\n\equiv a\mod{q}}\Lambda(n), \qquad \psi_q(x) := \ssum{n\leq x\\(n, q)=1}\Lambda(n), \qquad  \psi(x, \chi) := \sum_{n\leq x}\Lambda(n)\chi(n) .$$
Let us recall the following classical theorem of Page~\cite[Theorems~5.26, 5.28]{IK}.
\begin{lemme}
There is an absolute constant~$b$ such that for all~$Q, T\geq 2$, the following holds. The function~$s\mapsto \prod_{q\leq Q}\prod_{\chi\mod{q}} L(s, \chi)$ has at most one zero~$s=\beta$ satisfying~$\Re(s)>1-b/\log(QT)$ and~$|\Im(s)|\leq T$. If it exists, the zero~$\beta$ is real and it is the zero of a unique function~$L(s, \chit)$ for some primitive real character~$\chit$.
\end{lemme}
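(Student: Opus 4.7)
The plan is to recover this classical theorem—essentially due to Page and Landau—by separating the two possible sources of near-$1$ zeros. Since $L(s,\chi)$ and the underlying primitive $L(s,\chit)$ share their zeros in $\Re s > 0$ (the difference being a finite product of Euler factors $1 - \chit(p)p^{-s}$, nonzero in that region), we may restrict to primitive characters. The proof then splits into (i) a standard zero-free region off the real axis and for complex characters, and (ii) Landau's uniqueness statement for exceptional real zeros of real characters.

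Stage~(i): standard zero-free region. The key input is the Mertens non-negativity $3 + 4\cos\theta + \cos 2\theta \geq 0$, applied to the Dirichlet coefficients of $\log\zeta$, $\log L(\cdot,\chi)$ and $\log L(\cdot,\chi^2)$ to give
$$ -3\frac{\zeta'}{\zeta}(\sigma) - 4\Re\frac{L'}{L}(\sigma+it,\chi) - \Re\frac{L'}{L}(\sigma+2it,\chi^2) \geq 0 \qquad (\sigma > 1). $$
Combining this with the Hadamard-factorization bound $-\Re(L'/L)(s,\chi) \leq C\log(q(|\Im s|+2)) - \sum_\rho \Re((s-\rho)^{-1})$, specializing at $\sigma = 1 + \lambda/\log(qT)$ and dropping all zero terms except one target zero $\rho = \beta + i\gamma$ close to $\sigma + it$ with $t=\gamma$, yields $\beta \leq 1 - c/\log(qT)$ for suitable $\lambda,c>0$. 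The argument degenerates only when the third term above reflects a pole, which requires $\chi^2$ to be principal (i.e., $\chi$ real) and $\gamma$ small—the exceptional case treated next.

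Stage~(ii): Landau's uniqueness. Suppose two distinct primitive real non-principal characters $\chi_1,\chi_2$ of conductors $\leq Q$ both admitted real zeros $\beta_1,\beta_2 \geq 1 - b/\log(QT)$. Consider
$$ F(s) := \zeta(s)\, L(s,\chi_1)\, L(s,\chi_2)\, L(s,\chi_1\chi_2). $$
Since $\chi_1\neq\chi_2$, the product $\chi_1\chi_2$ is real and non-principal, and a direct computation on prime powers shows $\log F$ has non-negative Dirichlet coefficients. The function $F$ has a simple pole at $s=1$ with positive residue $\kappa = L(1,\chi_1) L(1,\chi_2) L(1,\chi_1\chi_2)$, and vanishes at $\beta_1,\beta_2$. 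Expanding $F$ around $s=1$ and invoking Landau's positivity lemma on the Dirichlet series of $F$ on the interval $[\max\{\beta_1,\beta_2\},1)$ yields an arithmetic inequality incompatible with the $\beta_j$ being too close to $1$; taking $b$ small enough produces a contradiction. Realness of the exceptional zero follows from~(i), since a non-real zero of a real character comes with its complex conjugate as a distinct near-$1$ zero.

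The main obstacle is the precise constant-tracking in stage~(i)—ensuring the constants from the Hadamard zero-counting and the Mertens-type inequality combine coherently and uniformly in $q$ and $T$, and that the $b$ selected at the end of stage~(ii) is still compatible with the $c$ from stage~(i). Both stages are otherwise entirely classical and closely follow~\cite[Chapter~5]{IK}.
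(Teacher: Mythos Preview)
Your sketch is correct and follows the standard classical route (de la Vall\'ee Poussin--Page zero-free region via the $3+4\cos\theta+\cos 2\theta$ inequality, then Landau's repulsion argument with the auxiliary product $\zeta L(\cdot,\chi_1)L(\cdot,\chi_2)L(\cdot,\chi_1\chi_2)$). The paper itself does not give a proof: the lemma is simply quoted as a classical result, with a reference to~\cite[Theorems~5.26, 5.28]{IK}, which is precisely the treatment you are outlining. So your proposal is in full agreement with the paper's approach---you have just written out what the paper takes for granted.

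One small remark on your stage~(ii): the phrase ``expanding $F$ around $s=1$ and invoking Landau's positivity lemma on the Dirichlet series of $F$ on the interval $[\max\{\beta_1,\beta_2\},1)$'' is a bit imprecise. The cleanest version works with $-F'/F(\sigma)$ for real $\sigma>1$: its Dirichlet coefficients are $\Lambda(n)(1+\chi_1(n))(1+\chi_2(n))\geq 0$, so $-F'/F(\sigma)\geq 0$, while Hadamard factorization gives an upper bound of the form $1/(\sigma-1) - 1/(\sigma-\beta_1) - 1/(\sigma-\beta_2) + O(\log Q)$, contradicting $\beta_1,\beta_2 > 1 - b/\log(QT)$ for $b$ small. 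This is exactly the argument in~\cite[Theorem~5.28]{IK}.
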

Given a large~$x$, we shall say that~$\chit$ is~$x$-exceptional if the above conditions are met with~$Q=T=\e^{\sqrt{\log x}}$. For all~$q\geq 1$ for which~$\qt|q$, we let~$\chit_q$ denote the character~$\mod{q}$ induced by~$\chit$.

\subsection{Primes in arithmetic progressions}

We deduce from the previous sections the following result about equidistribution of primes in arithmetic progressions.
\begin{theoreme}\label{thm:BVP}
Assume the GRH. For some~$\delta>0$, all~$x\geq 1$, $Q\leq x^{1/2+\delta}$ and all integers~$0<|a_1|, |a_2|\leq x^\delta$,
$$ \ssum{q\leq Q \\(q, a_1a_2)=1} \big(\psi(x ; q, a_1\bar{a_2}) - \frac1{\vphi(q)}\psi_q(x) \big) \ll x^{1-\delta} .$$
Unconditionally, under the same assumptions,
$$ \ssum{q\leq Q \\ (q, a_1a_2)=1} \big(\psi(x ; q, a_1\bar{a_2}) - \frac{\psi_q(x) + \bfUn_{\qt|q}\chit(a_2\bar{a_1})\psi(x, \chit_q)}{\vphi(q)}\big) \ll x\e^{-\delta\sqrt{\log x}}, $$
where the term~$\psi(x ; \chit_q)$ is to be taken into account only if the~$x$-exceptional character~$\chit$ exists.
\end{theoreme}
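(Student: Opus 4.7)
The plan is to reduce Theorem~\ref{thm:BVP} to the dispersion estimate of Theorem~\ref{thm:distrib-convo} via Heath-Brown's combinatorial identity, after isolating the contribution of small-conductor characters. By orthogonality of Dirichlet characters together with the definition~\eqref{eq:def-cu-R},
\[
\psi(x;q,a_1\bar{a_2}) = \frac{1}{\varphi(q)}\sum_{\chi\in\cX_q(R)}\chi(a_2\bar{a_1})\psi(x,\chi) + \sum_n\Lambda(n)\,\cu_R(na_2\bar{a_1};q),
\]
and I choose $R=x^{\delta'}$ under GRH and $R=\e^{\sqrt{\log x}}$ unconditionally, so that $\chit$ (when it exists) always belongs to $\cX_q(R)$. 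The principal character contributes $\psi_q(x)/\varphi(q)$, which is the main term. The remaining characters in $\cX_q(R)$ of conductor $>1$ and distinct from $\chit$ are then handled by a direct $L$-function input: under GRH, one has $|\psi(x,\chi)|\ll x^{1/2+\ee}$ for every non-principal~$\chi$, and the total over $q\leq Q$ is $\ll R^2 x^{1/2+\ee}$, which is $O(x^{1-\delta})$ once $\delta'$ is chosen small; unconditionally, Page's theorem together with the classical zero-free region gives $|\psi(x,\chi^*)|\ll x\e^{-c\sqrt{\log x}}$ for every primitive $\chi^*\neq\chit$ of conductor at most $\e^{\sqrt{\log x}}$, producing the required $O(x\e^{-\delta\sqrt{\log x}})$. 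The $\chit_q$-term, when present, is precisely the secondary main term subtracted in the unconditional statement.

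It remains to bound
\[
\cD_R := \ssum{q\leq Q\\ (q,a_1a_2)=1}\sum_n\Lambda(n)\,\cu_R(na_2\bar{a_1};q).
\]
For $q\leq x^{1/2}/R$, this is provided by Lemma~\ref{lemme:motohashi}. For $Q_0\in(x^{1/2}/R,\, x^{1/2+\delta'}]$ with $q\asymp Q_0$, I apply Heath-Brown's identity with $K=3$, writing $\Lambda(n)$ as a signed combination of $2j$-fold convolutions ($1\leq j\leq 3$) in which $j$ of the factors ($\mu$-components) are supported on $[1,x^{1/3}]$. Dyadically decomposing each variable and performing the standard pigeonhole grouping yields binary convolutions $\alpha_m\beta_n$ with $MN\asymp x$, the coefficients automatically satisfying the divisor bound~\eqref{eq:cond-taille}. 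The hard constraint $m_i\leq x^{1/3}$ forces (a product of) $\mu$-factors of dyadic length in the window $[x^\eta,x^{1/3-\eta}]$ demanded by Theorem~\ref{thm:distrib-convo}, which then delivers $\ll x(\log x)^{O(1)}R^{-1}$ per dyadic piece. Summing the $O((\log x)^{O(1)})$ pieces and combining the two regimes of~$q$ produces the claimed bounds once $R$ is taken as above.

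The main technical obstacle is the combinatorial regrouping: one must check that \emph{every} dyadic component of the Heath-Brown decomposition can be arranged as a binary convolution whose short side~$N$ lies in the narrow admissible window $[x^\eta,x^{1/3-\eta}]$. The awkward configurations are those in which several $n_i$-variables have dyadic length in the central range $[x^{1/3-\eta},x^{2/3+\eta}]$, squeezing the window from both sides; the constraint $m_i\leq x^{1/3}$ always provides at least one short factor that fits, but this requires a careful case analysis and, if needed, passing to Heath-Brown's identity with $K\geq 4$ to guarantee enough short factors. A secondary nuisance is the replacement of the sharp cut-off $q\asymp Q_0$ by a smooth majorant $\gamma(q)$ as required by Proposition~\ref{prop:distrib-convo}, which is achieved with acceptable loss by the standard device already employed in the derivation of Theorem~\ref{thm:distrib-convo} from Proposition~\ref{prop:distrib-convo}.
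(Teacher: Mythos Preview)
Your overall architecture matches the paper's: split off small conductors via~\eqref{eq:def-cu-R}, handle them by $L$-function bounds (GRH or the classical zero-free region), and bound the $\cu_R$-contribution via Heath-Brown's identity combined with Theorem~\ref{thm:distrib-convo}. The small-conductor discussion and the choice of~$R$ are fine. However, the combinatorial step contains a genuine gap.

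You assert that after Heath-Brown and dyadic localisation, the constraint $m_i\leq x^{1/K}$ ``always provides at least one short factor'' landing in the window $[x^\eta,x^{1/3-\eta}]$, possibly after increasing~$K$. This is false, and raising~$K$ does not help. The obstruction is that the $\mu$-supported variables $m_i$ may all be trivial (of size $O(1)$), leaving only the smooth variables $n_i$. Two configurations then escape your treatment entirely:
\begin{itemize}
\item[(a)] one $n_k$ has length $>x^{1-O(\eta)}$ (for instance $j=1$, $m_1\asymp 1$);
\item[(b)] at least two of the $n_k$ have length $>x^{1/3-\eta}$, yet none lies in the admissible window (for instance $j=3$, $m_i\asymp 1$, $N_1\approx N_2\approx N_3\approx x^{1/3}$).
\end{itemize}
In neither case is there a subproduct in $[x^\eta,x^{1/3-\eta}]$, so neither Theorem~\ref{thm:distrib-convo} nor Lemma~\ref{lemme:motohashi} (which also requires both factors to be large) applies.

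The paper, which in fact proves the closely related Proposition~\ref{prop:majo-q2} and declares Theorem~\ref{thm:BVP} analogous, treats these cases by separate and, for~(b), genuinely different inputs. Case~(a) is elementary: with one long smooth variable one estimates the inner $n$-sum in $\cu_R$ directly via~\eqref{eq:expr-cuR-cu1}, the trivial count~\eqref{eq:comptage-intervalle}, and P\'olya--Vinogradov (Lemma~\ref{lemme:PV}). Case~(b) is the substantial omission: after separating variables the paper invokes \cite[Theorem~7]{BFI}, an equidistribution result for $\sum_{m,n}\cu_1(mn\ell\bar a;q)$ with \emph{two} long smooth variables (essentially $\tau_3$ in arithmetic progressions beyond $x^{1/2}$, of Friedlander--Iwaniec/Heath-Brown type). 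This is an independent deep input and is not a consequence of Theorem~\ref{thm:distrib-convo}; without it the configuration $N_1\approx N_2\approx N_3\approx x^{1/3}$ is simply not bounded in your argument.
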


Using the Dirichlet hyperbola method (see in particular section~VII of~\cite{FouvryTit}), it follows that the same estimate holds on the condition~$q\leq x^{1-\ee}$ for any fixed~$\ee>0$ (the implicit constants and~$\delta$ may then depend on~$\ee$). Note however that the symmetry point is at~$q\approx (x|a_2|)^{1/2}$, rather than~$x^{1/2}$ (so the flexibility of taking~$Q$ somewhat larger than~$x^{1/2}$ is not superfluous). We refer to~\cite{Fiorilli} for more explanations on what happens when~$Q$ is very close to~$x$.

As mentioned in the introduction, the uniformity in~$a_1$ and~$a_2$ is an interesting question. At the present state of knowledge, bounds coming from the theory of automorphic forms are typically badly behaved in that aspect. By using a more refined form of the combinatorial decomposition~\eqref{eq:sum-multilin}, Friedlander and Granville~\cite{FG} prove that~$|a_1|\leq x^{1/4-\ee}$ is admissible for all~$\ee>0$ (in the case~$a_2=1$), with a somewhat larger error term.

For the application to the Titchmarsh divisor problem, the following slightly weaker statement suffices.
\begin{prop}\label{prop:majo-q2}
For some~$\delta>0$, all~$x\geq 2$ and~$0<|a|\leq x^\delta$, assuming the GRH, we have
\begin{equation}
\ssum{q\leq \sqrt{x} \\(q, a)=1} \big(\psi(x ; q, a) - \psi(q^2 ; q, a) - \frac{\psi_q(x) - \psi_q(q^2)}{\vphi(q)} \big) \ll x^{1-\delta}.\label{eq:majo-q2-cond}
\end{equation}
Unconditionally,
\begin{equation}
\begin{aligned}
\ssum{q\leq \sqrt{x} \\(q, a)=1} \big(\psi(x ; q, a) - \psi(q^2 ; q, a) - \frac{\psi_q(x) - \psi_q(q^2)}{\vphi(q)}\ -&\ \bfUn_{\qt|q}\bar{\chi(a)}\frac{\psi(x ; \chit_q) - \psi(q^2 ; \chit_q)}{\vphi(q)} \big) \\ &\ll x\e^{-\delta\sqrt{\log x}}.\label{eq:majo-q2-uncond}
\end{aligned}
\end{equation}
\end{prop}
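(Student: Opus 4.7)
The plan is to apply the decomposition~\eqref{eq:expr-cuR-cu1}, namely
$$ \cu_1(n\bar a;q) = \cu_R(n\bar a;q) + \vphi(q)^{-1}\ssum{\chi\mod{q}\\1<\cond(\chi)\leq R}\chi(n\bar a), $$
with $R=x^\delta$ in the conditional case and $R=\e^{\delta\sqrt{\log x}}$ unconditionally, and to treat each piece separately. The left-hand side of~\eqref{eq:majo-q2-cond} or~\eqref{eq:majo-q2-uncond} then splits into a ``small-conductor'' piece, handled by classical analytic methods, and a ``large-conductor'' piece to which Theorem~\ref{thm:distrib-convo} applies after a combinatorial decomposition of~$\Lambda$.

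For the small-conductor piece, grouping by primitive characters $\chit^*$ of conductor $r\in(1,R]$, the contribution takes the shape
$$ \sum_{1<r\leq R}\ssum{\chit^*\mod{r}\\\text{primitive}}\bar{\chit^*(a)}\ssum{q\leq\sqrt{x}\\r\mid q,\,(q,a)=1}\frac{\psi(x,\chit^*)-\psi(q^2,\chit^*)}{\vphi(q)}. $$
Under GRH one has $|\psi(y,\chit^*)|\ll y^{1/2}(\log ry)^2$ for $y\geq 2$, and since $\sum_{q\leq\sqrt{x},\,r\mid q}\vphi(q)^{-1}\ll(\log x)/r$, the total is $\ll Rx^{1/2}(\log x)^{O(1)}=O(x^{1-\delta})$ provided $\delta$ is small. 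Unconditionally, the Vinogradov--Korobov zero-free region together with the standard argument isolating at most one exceptional real zero gives an estimate of the same shape after peeling off the contribution of the $x$-exceptional character $\chit$, which produces exactly the $\chit(a)$-term in~\eqref{eq:majo-q2-uncond} with residual error $O(x\e^{-\delta\sqrt{\log x}})$.

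For the large-conductor piece, I would decompose $\Lambda(n)$ on $(1,x]$ by Heath-Brown's identity with parameters $K=3$, $z=x^{1/6}$, expressing it as a combination of $2k$-fold convolutions ($k\leq3$) in which $k$ ``rough'' factors are constrained to be $\leq z$. After dyadic decomposition of every variable, one is reduced to bounding multilinear sums
$$ \ssum{q\sim Q\\(q,a)=1}\gamma(q)\ssum{n_i\sim N_i}\Big(\prod_i\alpha_i(n_i)\Big)\cu_R(n_1\cdots n_{2k}\bar a;q) $$
with $\prod_i N_i\asymp N\in(Q^2,x]$ and the $\alpha_i$ bounded by divisor functions. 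The crucial step is a combinatorial regrouping of the variables into a binary convolution $\alpha_m\beta_n$ with $x^\eta\leq n\leq Q^{2/3-\eta}$ for some small fixed $\eta>0$: because at most three of the $N_i$ exceed $z$, and $N>Q^2\geq x^{1/2-\ee}$, one verifies by case analysis (as in~\cite[Section~V]{FouvryTit} or~\cite[Section~13]{BFI}) that such a regrouping is always available. Applying Theorem~\ref{thm:distrib-convo} to each resulting configuration yields a total bound $O(x(\log x)^{O(1)}/R)$, matching the error terms claimed in~\eqref{eq:majo-q2-cond} and~\eqref{eq:majo-q2-uncond}.

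The main obstacle is this combinatorial regrouping: for each ordering $N_1\geq\cdots\geq N_{2k}$ of the dyadic sizes one must exhibit a partition $\{1,\ldots,2k\}=I\sqcup J$ with $\prod_{j\in J}N_j\in[x^\eta,Q^{2/3-\eta}]$, and check that the divisor-bounded sequences thus formed satisfy hypothesis~\eqref{eq:cond-taille} of Theorem~\ref{thm:distrib-convo}. The lower bound $N>Q^2$, arising from the restriction $n>q^2$ built into the statement, is what gives sufficient room for such a splitting; without it, the same strategy would not close.
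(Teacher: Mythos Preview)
Your overall architecture --- split into small and large conductors via $\cu_R$, treat the former by classical explicit-formula bounds, and attack the latter via Heath-Brown's identity plus Theorem~\ref{thm:distrib-convo} --- matches the paper's. The treatment of the small-conductor piece is essentially correct.

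The genuine gap is in your handling of the large-conductor piece. Your claim that after Heath-Brown decomposition one can \emph{always} regroup the variables into a bilinear form with one factor in $[x^\eta, Q^{2/3-\eta}]$ is false, and the references you cite do not assert it. Two configurations escape:
\begin{itemize}
\item[(a)] one smooth variable $n_k$ of size $>x^{1-O(\eta)}$ with all others below $x^\eta$ (e.g.\ the $j=1$ term with $m_1\approx 1$);
\item[(b)] two (or three) smooth variables each of size $\approx x^{1/3}$, with the $\mu$-weighted variables all below $x^\eta$.
\end{itemize}
In either case no subset of the dyadic sizes has product in $(x^\eta, x^{1/3-\eta})$, so Theorem~\ref{thm:distrib-convo} cannot be invoked. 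The constraint $n>q^2$ does not help here: it fixes the total size but says nothing about the individual factors.

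The paper closes these cases with separate tools. Case~(a) is handled directly: the sum over the long smooth variable is evaluated via~\eqref{eq:comptage-intervalle} and the character sums via P{\'o}lya--Vinogradov (Lemma~\ref{lemme:PV}), exploiting that $\cond(\chi)\leq R$. Case~(b) requires an input beyond bilinear structure, namely \cite[Theorem~7]{BFI}, which bounds sums with \emph{two} smooth variables of size $>x^{1/3-\eta}$ to individual moduli; this is applied after writing $\cu_R$ as in~\eqref{eq:expr-cuR-cu1} and treating the $\cu_1$-part and the small-conductor tail separately. Only the genuinely bilinear case~(c) uses Theorem~\ref{thm:distrib-convo} (together with Lemma~\ref{lemme:motohashi} for the Bombieri--Vinogradov range $Q\leq x^{1/2-\eta/2}$, which you also omit). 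Without cases~(a) and~(b), the argument does not close.
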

We will focus here on proving Proposition~\ref{prop:majo-q2} only, because the presentation is slightly simpler and addresses all the essential issues.

\begin{proof}[of Proposition~\ref{prop:majo-q2}]
Let~$1\leq R\leq x^{1/10}$ be a parameter. Let
$$ \cS_1 := \ssum{q\leq \sqrt{x} \\ (q, a)=1} \ssum{q^2 < n\leq x \\ n\equiv a\mod{q}} \Lambda(n). $$
By orthogonality of characters,
\begin{equation}
\cS_1 = \ssum{q\leq \sqrt{x} \\ (q, a)=1} \frac1{\vphi(q)}\ssum{\chi\mod{q}}
\ssum{q^2 < n \leq x} \chi(n\bar{a}) \Lambda(n) \label{eq:TM-cond-orthog}
\end{equation}
We decompose~$\cS_1=\cS_1^- + \cS_1^+$ where~$\cS_1^-$ is the contribution of those characters~$\chi$ of conductor at most~$R$, and
$$ \cS_1^+ = \ssum{q\leq \sqrt{x} \\ (q, a)=1} \ssum{q^2 < n \leq x} \Lambda(n) \cu_R(n\bar{a} ; q). $$

We first focus of~$\cS_1^+$. By the Heath-Brown identity~\cite[lemma~5]{BFI} and a dichotomy argument similar to~\cite[Section~2.(a)]{FT-Piltz}, the problem is reduced to showing
\begin{equation}\label{eq:sum-multilin}
\begin{aligned}
\ssum{Q<q\leq2Q\\(q,a)=1}\underset{\substack{(1-\Delta)M_i<m_i\leq \min\{M_i,x^{1/4}\}\\(1-\Delta)N_i<n_i\leq N_i \\1\leq i\leq j}}{\sum\cdots\sum}
\mu(m_1)\cdots\mu(m_j)(\log n_1) &\cu_R(n_1m_1\cdots n_jm_j\bar{a} ; q) \\
&\ll x(\log x)^{O(1)}R^{-1}
\end{aligned}
\end{equation}
where~$j\in\{1, 2, 3, 4\}$,~$x^{-1/10}<\Delta \leq 1/2$, and~$Q, M_i, N_i\geq 1\ (1\leq i\leq j)$ are real numbers such that
\[ Q^2 \leq \prod_i M_iN_i \leq x, \qquad M_i\leq 2x^{1/4} .\]

Let us justify briefly this step. The Heath-Brown identity states that~$\cS_1^+$ is a linear combination of the expression on the left-hand side of~\eqref{eq:sum-multilin} for various values of~$j$, with the conditions~$q\leq \sqrt{x}$, $m_i\leq x^{1/4}$ and~$q^2 < m_1n_1\dotsb m_jn_j \leq x$. We then localize $q$ in dyadic intervals, and each~$n_i$, $m_i$ in intervals~$[(1-\Delta)X, X]$ ($X=N_i$ or~$M_i$). Having done this, the subset of~$(M_i, N_i)$ for which the condition~$q^2< \prod_i m_i n_i \leq x$ is relevant will only concern those indices with~$\prod_i m_i n_i \in [(1-\Delta)^{8}x, (1-\Delta)^{-8}x]$ or~$[(1-\Delta)^{8}q^2, (1-\Delta)^{-8}q^2]$. For those~$M_i, N_i$, we apply Lemma~\ref{lemme:shiu} or a trivial bound (if~$q$ is very small); for the others, the bound~\eqref{eq:sum-multilin} will apply. We deduce respectively
\begin{equation}
\cS_1^+ \ll x \Delta (\log x)^{O(1)} + x\Delta^{-8}(\log x)^{O(1)}R^{-1}\label{eq:rel-S1p-smooth}
\end{equation}
and optimizing~$\Delta$ yields~$\cS_1^+ \ll x (\log x)^{O(1)}R^{-1/9}$.

Let~$\eta>0$ be small. The contribution of tuples such that~$\prod_i M_iN_i \leq x^{1-\eta}$ is trivially bounded by~$O_\ee
(x^{1-\eta + \ee})$ using~Lemma~\ref{lemme:shiu}. Suppose then~$\prod_i M_iN_i > x^{1-\eta}$. For convenience we rename~$x = \prod_iM_i N_i$. Our objective bound for~\eqref{eq:sum-multilin} is~$O(x^{1-\delta})$ and we now have~$M_i\leq x^{1/4+\eta}$ if~$\eta$ is small enough.

Fix~$\eta\in(0, 1/100]$. At least one of the three following cases must hold:
\begin{enumerate}[(a)]
\item there exists an index~$k$ such that~$N_k>x^{1-(2j-1)\eta}$,
\item we have~$\min\{N_k, N_{k'}\}>x^{1/3-\eta}$ for two indices~$k\neq k'$,
\item there exists an index~$k$ such that~$M_k$ or~$N_k$ lies in the interval~$[x^\eta, x^{1/3-\eta}]$.
\end{enumerate}

In case~(a), our sum~\eqref{eq:sum-multilin} is at most
\begin{equation}
\cS_a := x^\ee\ssum{Q<q\leq2Q \\ (q, a)=1}\ssum{M/2 < m\leq M} \Big| \sum_{(1-\Delta)N < n \leq N}\beta_n \cu_R(mn\bar{a} ; q) \Big|\label{eq:convo-a}
\end{equation}
with~$\beta=\bfUn$ or~$\log$,~$MN=x$ and~$N\geq x^{1-7\eta}$. Choose~$\eta<1/30$, for the sum over~$n$, we express~$\cu_R$ as~\eqref{eq:expr-cuR-cu1}. Using
\begin{equation}
\ssum{n\leq z\\n\equiv a\mod{q}} 1 = \frac zq  + O(1) \qquad (z\geq 1, (a, q)\in\bfN^2)\label{eq:comptage-intervalle}
\end{equation}
and partial summation in case~$\beta=\log$, we get that the sum over~$n$ above is
$$  \sum_{(1-\Delta)N < n \leq N}\beta_n \cu_R(mn\bar{a} ; q)  \ll \log x + \frac1{\vphi(q)} \ssum{\chi\mod{q} \\ 1<\cond(\chi)\leq R} \big|\sum_{(1-\Delta)N < n \leq N} \beta_n\chi(n)\big|. $$
For each~$\chi$ in the above, the sum over~$n$ is estimated using Lemma~\ref{lemme:PV} as
$$ \sum_{(1-\Delta)N < n \leq N} \beta_n\chi(n) \ll R^{1/2}(\log x)^2 \tau(q). $$
Dropping the condition~$\cond(\chi)\leq R$, we obtain for~\eqref{eq:convo-a} a crude bound
$$ \cS_a \ll_\ee x^\ee MQR^{1/2} \ll QR^{1/2}x^{8\eta} \ll x^{11/20+8\eta+\delta} $$
which is acceptable.

Consider case~(b). Then the sum on the LHS of~\eqref{eq:sum-multilin} is of the form
\begin{equation}
\cS_b := \ssum{Q<q\leq2Q\\(q, a)=1}\underset{\substack{(1-\Delta)N<n\leq N\\(1-\Delta)M<m\leq M \\ (1-\Delta)^{2j-2}L<\ell\leq L}}{\sum\sum\sum}
\alpha(m)\beta(n)\gamma_\ell\cu_R(mn\ell\bar{a} ; q)\label{eq:convo-b}
\end{equation}
where~$M, N>x^{1/3-\eta}$,~$MNL=x$,~$\alpha$ and~$\beta$ are either~$\bfUn$ or~$\log$, and~$\gamma_\ell$ satisfies
\[ |\gamma_\ell| \leq \tau_{2j-2}(\ell)\log \ell \]
By partial summation and upon rewriting the size restrictions on~$m, n, \ell, q$ as differences of one-sided inequalities,
it suffices to establish the bound
\[ \cS_b' :=  \sum_{\ell\leq L}\Bigg|\ssum{q\leq Q\\(q, a\ell)=1}\sum_{m\leq M}\sum_{n\leq N}\cu_R(mn\ell\bar{a} ; q)\Bigg| \ll x^{1-\delta} \]
whenever~$M, N>x^{1/3-2\eta}$ and~$Q\leq 2\sqrt{x}$. Writing~$\cu_R$ as in~\eqref{eq:expr-cuR-cu1}, we have by the triangle inequality
$$ \cS_b' \ll \cS'_{b1} + \cS'_{b2}, $$
where
$$ \cS'_{b1} = \sum_{\ell\leq L}\Bigg|\ssum{q\leq Q\\(q, a\ell)=1}\sum_{m\leq M}\sum_{n\leq N}\cu_1(mn\ell\bar{a} ; q)\Bigg| ,  $$
$$ \cS'_{b2} = \sum_{\ell\leq L}\sum_{q\leq Q}\frac1{\vphi(q)}\ssum{\chi\mod{q} \\ 1<\cond(\chi)\leq R} \Big|\sum_{m\leq M}\chi(m)\Big| \Big|\sum_{n\leq N}\chi(n)\Big|. $$
Theorem~7 of~\cite{BFI} yields the acceptable bound~$\cS'_{b1}\ll x^{1-\delta}$ as long as~$\eta<1/30$. In~$\cS'_{b2}$, by Lemma~\ref{lemme:PV}, the sums over~$m$ and~$n$ are majorized by~$O(\tau(q)R^{1/2+\ee})$. Dropping the condition~$\cond(\chi)\leq R$, we obtain for~\eqref{eq:convo-b} a bound
$$ \cS'_{b2} \ll_\ee x^\ee LRQ \ll_\ee x^{14/15+5\eta} $$
which is also acceptable.

In case~(c), we write our sum as
\begin{equation}
\cS_c := \ssum{Q<q\leq 2Q \\ (q, a)=1} \underset{\substack{(1-\Delta)^{2j-1}M<m\leq M \\ (1-\Delta)N<n\leq N}}{\sum\sum} \alpha_m \beta_n \cu_R(mn\bar{a} ; q)\label{eq:convo-c}
\end{equation}
where~$x^\eta\leq N\leq x^{1/3-\eta}$, so~$M\geq x^{2/3}$. We may assume that~$R\leq x^{\eta/2}$. If~$Q\leq x^{1/2-\eta/2}$, then Lemma~\ref{lemme:motohashi} is applicable. If on the contrary~$x^{1/2-\eta/2} < Q \leq \sqrt{x}$, then Theorem~\ref{thm:distrib-convo} is applicable with~$\eta\gets \eta/2$ (assuming~$|a|\leq x^{\delta/2}$ as we may). In both cases, we obtain that the quantity~\eqref{eq:convo-c} is majorized by
$$ \cS_c \ll x(\log x)^{O(1)}R^{-1}. $$
Summarizing the above and in view of~\eqref{eq:rel-S1p-smooth}, we have obtained
$$ \cS_1^+ \ll x(\log x)^{O(1)}R^{-1/9}. $$
We consider now~$\cS_1^-$, which we recall is
\begin{equation}
\cS_1^- = \ssum{q\leq \sqrt{x} \\ (q, a)=1} \frac1{\vphi(q)} \ssum{\chi\mod{q} \\ \cond(\chi)\leq R} \ssum{q^2 < n \leq x} \Lambda(n)\chi(n\bar{a}).\label{eq:def-S1moins}
\end{equation}

First let us assume the GRH. Isolating the contribution of the principal character, we write
$$ \cS_1^- = \ssum{q\leq \sqrt{x} \\ (q, a)=1} \frac{\psi_q(x) - \psi_q(q^2)}{\vphi(q)} + \cS_1^\flat, $$
say. For any non-trivial character~$\chi\mod{q}$ with~$q\leq x$, the GRH~\cite[formula~(13.19)]{MontgomeryVaughan} yields
$$ \ssum{q^2 < n \leq x} \chi(n)\Lambda(n) \ll x^{1/2}(\log x)^2. $$
We therefore have
$$ \cS_1^\flat \ll x^{1/2}(\log x)^2 \sum_{q\leq \sqrt{x}} \frac1{\vphi(q)} \ssum{\chi\mod{q} \\ \cond(\chi) \leq R} 1 \ll Rx^{1/2}(\log x)^3 $$
which is acceptable. The choice~$R = x^{\delta}$ for small enough~$\delta$ concludes the proof of~\eqref{eq:majo-q2-cond}.

Unconditionally, for any~$q\leq \e^{\sqrt{\log x}}$ and any non-principal, non $x$-exceptional character~$\chi\mod{q}$, we have by a straightforward adaptation of~\cite[Theorem~11.16]{MontgomeryVaughan} the estimate
$$ \ssum{q^2 < n \leq x} \chi(n)\Lambda(n) \ll x\e^{-c\sqrt{\log x}} $$
for some absolute constant~$c>0$. Choose~$R = \e^{c\sqrt{\log x}/2}$. We extract from~$\cS_1^-$ the contribution from the principal character and the possible $x$-exceptional characters, and write accordingly
$$ \cS_1^- = \ssum{q\leq \sqrt{x} \\ (q, a)=1} \frac{\psi_q(x) - \psi_q(q^2) + \bfUn_{\qt|q}\bar{\chi(a)}(\psi(x ; \chit_q) - \psi(q^2 ; \chit_q))}{\vphi(q)} + \cS_1^{\flat\flat} + O(x\e^{-c\sqrt{\log x}/2}) $$
the error term being there to cover the trivial case when either~$\chit$ was inexistant, or~$\qt>R$. By the same computation as above,
$$ \cS_1^{\flat\flat} \ll Rx(\log x)\e^{-c\sqrt{\log x}} \ll x\e^{-c\sqrt{\log x}/3}. $$
This concludes the proof of~\eqref{eq:majo-q2-uncond} hence of Proposition~\ref{prop:majo-q2}.

\end{proof}

\subsection{Proof of Theorems~\ref{thm:TM-cond} and~\ref{thm:TM-uncond}}\label{sec:deduction-TM}

It is now straightforward to deduce Theorems~\ref{thm:TM-cond} and~\ref{thm:TM-uncond}. By the Dirichlet hyperbola method~\cite[page~45]{FT-Piltz}, we have
$$ T(x) = 2\sum_{q\leq \sqrt{x}} \big(\psi(x ; q, 1) - \psi(q^2 ; q, 1)\big) + O(\sqrt{x}\log x). $$
Assume first the GRH. Then Proposition~\ref{prop:majo-q2} yields
$$ T(x) = 2\sum_{q\leq \sqrt{x}} \frac{\psi_q(x) - \psi_q(q^2)}{\vphi(q)} + O(x^{1-\delta}) $$
The GRH~\cite[formula~(13.19)]{MontgomeryVaughan} allows us to deduce
$$ T(x) = 2\sum_{q\leq \sqrt{x}} \frac{x - q^2}{\vphi(q)} + O(x^{1-\delta}). $$
The main term is computed using~\cite[Lemme~6]{Fouvry82}, which yields the claimed estimate.

\medskip

Unconditionally, from Proposition~\ref{prop:majo-q2}, we merely have to add to our estimate for~$T(x)$ the additional contribution of the $x$-exceptional character (if it exists), which takes the form
\begin{equation}
2\ssum{q\leq \sqrt{x} \\ \qt|q} \frac{\psi(x ; \chit_q) - \psi(q^2 ; \chit_q)}{\vphi(q)} \label{eq:contrib-ap-BV-uncond}
\end{equation}
We have from~\cite[Theorem~11.16]{MontgomeryVaughan}
$$ \psi(x ; \chit_q) = -\frac{x^{\beta}}{\beta} + O(x\e^{-\delta\sqrt{\log x}}) $$
and similarly
$$ \psi(q^2 ; \chit_q) = -\frac{q^{2\beta}}{\beta} + O(x\e^{-\delta\sqrt{\log x}}) $$
at the possible cost of changing the numerical value of~$\delta$. We obtain that~\eqref{eq:contrib-ap-BV-uncond} equals
$$ -\frac{2}{\beta}\ssum{q\leq \sqrt{x} \\ \qt|q} \frac{x^{\beta} - q^{2\beta}}{\vphi(q)} + O(x\e^{-\delta\sqrt{\log x}}). $$
The sums over~$q$ are computed using~\cite[Lemme~6]{Fouvry82} (and partial summation in the form~$x^\beta - q^{2\beta}=\beta \int_{q^2}^xt^{\beta-1}\dd t$), which yields Theorem~\ref{thm:TM-uncond}. Corollary~\ref{coro:TM-partial} is straightforward.

There remains to justify Corollary~\ref{coro:TM-onesided}. Note that~$C_2(\qt)$ is absolutely bounded, while~$\qt \leq \e^{\sqrt{\log x}}$ by definition. Therefore~$x^\beta\to\infty$, and~$\beta\li(x^\beta)/x^\beta \sim (\log x)^{-1}$. We deduce
$$ \frac{\log \qt + C_2(\qt) - \gamma}{x^\beta /(\beta \li(x^\beta))} \underset{x\to\infty}{\longrightarrow} 0$$
in an effective way. For~$x$ large enough, it is less than~$1/3$ and Corollary~\ref{coro:TM-onesided} follows.

\begin{remarque}
If we were to consider~$\tau(n-a)$ instead of~$\tau(n-1)$, for some~$a$ which is not a perfect square, then the Siegel zero contribution (if it 
existed) would have a twist by~$\chi(a)$, which is~{\it a priori} of unpredictable sign.
\end{remarque}

\section{Application to correlation of divisor functions}\label{sec:appl-corr-divis}

In this section, we justify Theorem~\ref{thm:Tk}. The proof has the same structure as that of Theorems~\ref{thm:TM-cond} and~\ref{thm:TM-uncond}, replacing the function~$\Lambda(n)$ by~$\tau_k(n)$.

\subsection{An equidistribution estimate}

The analogue of Theorem~\ref{thm:BVP} is the following:
\begin{theoreme}\label{thm:distrib-tauk}
There exists~$\eta>0$ such that under the conditions~$k\geq 4$, $0<|a|\leq x^{\eta}$ and~$Q\leq x^{1/2+\eta}$,
\begin{equation}
\ssum{q\leq Q\\(q, a)=1}\Big(\ssum{n\leq x \\ n\equiv a\mod{q}} \tau_k(n) - \frac1{\vphi(q)}\ssum{n\leq x \\ (n, q)=1} \tau_k(n)\Big) \ll x^{1-\eta/k}.\label{eq:majo-distrib-Tk}
\end{equation}
If the Lindelöf hypothesis is true for all Dirichlet~$L$-functions, then the right-hand side can be replaced by~$x^{1-\eta}$.
\end{theoreme}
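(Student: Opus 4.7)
The approach parallels the proof of Proposition~\ref{prop:majo-q2}, with Heath--Brown's identity replaced by the elementary decomposition~$\tau_k=\bfUn*\dotsb*\bfUn$ ($k$ factors). Fix a small~$\eta>0$ (chosen at the end), set~$R=x^{c/k}$ for a small absolute constant~$c>0$, and use~\eqref{eq:expr-cuR-cu1} to split the left-hand side of~\eqref{eq:majo-distrib-Tk} as~$\cS^++\cS^-$, where~$\cS^+$ collects the contribution of~$\cu_R$ (large conductors) and~$\cS^-$ is the sum over primitive characters of conductor~$r\in(1,R]$.

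For~$\cS^+$, expand~$\tau_k$ as a~$k$-fold convolution, decompose each~$n_i\in(N_i,2N_i]$ dyadically with~$N_1\leq\dotsb\leq N_k$ and~$\prod N_i\asymp x$, and smooth the size constraint~$n_1\dotsm n_k\leq x$ at a scale~$\Delta=x^{-c/(2k)}$; the transition error is controlled by Lemma~\ref{lemme:shiu}. When~$N_k\geq x^{1-\epsilon_1}$ (so every other~$N_i<x^{\epsilon_1}$), expanding~$\cu_R$ via~\eqref{eq:expr-cuR-cu1} and treating the~$n_k$-sum by~\eqref{eq:comptage-intervalle} and Lemma~\ref{lemme:PV}, as in case~(a) of Proposition~\ref{prop:majo-q2}, yields the required saving. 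Otherwise every~$N_i\leq x^{1-\epsilon_1}$; since~$N_1\leq x^{1/k}\leq x^{1/4}$, I greedily assemble~$I\subset\{1,\dotsc,k\}$ starting from~$\{1\}$ so that~$N:=\prod_{i\in I}N_i$ lies in~$[x^\eta,x^{\eta+1/k}]$, which is contained in~$[x^\eta,Q^{2/3-\eta}]$ for~$\eta$ small enough uniformly in~$k\geq 4$. Writing the resulting bilinear form as~$\sum\alpha_m\beta_n\cu_R(mn\bar a;q)$ with~$|\alpha_m|\leq\tau_k(m)$ and~$|\beta_n|\leq\tau_k(n)$, Theorem~\ref{thm:distrib-convo} (or Lemma~\ref{lemme:motohashi} when~$Q\leq x^{1/2-\eta}$) yields the bound~$x(\log x)^{O(1)}R^{-1}$. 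Summing over the~$O((\log x)^k)$ dyadic configurations produces~$\cS^+\ll x^{1-c/k+\ee}$.

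For~$\cS^-$, I apply Perron's formula to~$\sum_{n\leq x,\,(n,q)=1}\tau_k(n)\chit(n)$ (with~$\chit$ primitive of conductor~$r\in(1,R]$), shift the contour to~$\Re(s)=1/2$, and combine the convexity bound~$L(\tfrac12+it,\chit)\ll_\ee(r(|t|+1))^{1/4+\ee}$ with the balanced truncation~$T\asymp(x/r^{k/2})^{2/(k+4)}$ to obtain~$\ll_\ee x^{1-2/(k+4)+\ee}r^{k/(k+4)+\ee}$ (after absorbing the local Euler factors at primes dividing~$q$). Summing over primitive~$\chit\mod r$, non-primitive extensions to~$q$ with~$r\mid q$, and~$q\leq Q$ (using~$\sum_{q\leq Q,\,r\mid q}1/\vphi(q)\ll(\log x)^{O_k(1)}/r$) then gives~$\cS^-\ll x^{1-2/(k+4)+O(c/k)+\ee}\ll x^{1-\eta/k}$ for~$c$ sufficiently small. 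Under the Lindel\"of hypothesis, the exponent~$1/4$ becomes~$\ee$, so~$R$ may be taken as a fixed (small) power of~$x$ independent of~$k$, and the same argument yields~$\cS^-\ll x^{1/2+\ee}R^{O(1)}\ll x^{1-\eta}$ uniformly in~$k$.

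The bottleneck is the treatment of~$\cS^-$: the convexity bound for~$L(s,\chit)^k$ on the critical line ties the~$L$-function loss to the depth~$k$ of the convolution, forcing~$R$ to shrink with~$k$ and injecting a~$1/k$ factor into the final saving. Every other ingredient, in particular the combinatorial splitting and the dispersion input via Theorem~\ref{thm:distrib-convo}, produces savings that are uniform in~$k$; the Lindel\"of hypothesis is precisely what decouples the~$L$-function growth from the combinatorial depth, recovering a~$k$-uniform power saving and yielding the conditional statement.
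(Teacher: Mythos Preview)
Your combinatorial splitting for~$\cS^+$ has a genuine gap. The greedy claim that one can always find~$I\subset\{1,\dotsc,k\}$ with~$\prod_{i\in I}N_i\in[x^\eta,x^{\eta+1/k}]$ is false: take~$k=4$ with~$N_1=N_2=1$ and~$N_3=N_4=x^{1/2}$. Then your case~(a) does not apply (since~$N_4=x^{1/2}<x^{1-\epsilon_1}$), yet the only subset products are~$\asymp 1$, $x^{1/2}$, or~$x$, none of which lies in~$[x^\eta,x^{\eta+1/4}]$ --- nor even in the wider window~$[x^\eta,x^{1/3-\eta}]$ required by Theorem~\ref{thm:distrib-convo}. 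The issue is that the jump when you add the next index is~$N_{j+1}$, which need not be~$\leq x^{1/k}$; the only thing you know is that~$N_1\leq x^{1/k}$. More generally, whenever two or three of the~$N_i$ exceed~$x^{1/3-\delta}$ and the remaining product is~$\leq x^\delta$, no bilinear regrouping lands in the admissible range for the dispersion input.

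This is precisely where the paper's proof brings in an additional, deep ingredient. When the greedy fails, at most three variables are large (each~$>x^{1/3-\delta_1}$) and the product of the others is~$\leq x^{\delta_1}$; the paper treats these cases~$\cB_2,\cB_3$ (and the degenerate~$\cB_1$, which coincides with your case~(a)) by separating variables via Mellin inversion and then invoking, for each individual~$q\leq x^{1/2+\delta}$, the equidistribution of~$\tau_3$ (resp.~$\tau_2$) in a single residue class --- namely~\cite[Lemma~2]{BFI2}, which rests on the Friedlander--Iwaniec and Heath--Brown results and ultimately on Deligne's proof of the Weil conjectures. Your argument currently has no substitute for this step, and Theorem~\ref{thm:distrib-convo} alone cannot cover it (the short side can be of bounded size). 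The remainder of your outline --- the treatment of~$\cS^-$ via Perron and convexity, the smoothing at scale~$\Delta$, and the application of Theorem~\ref{thm:distrib-convo} or Lemma~\ref{lemme:motohashi} in the genuinely bilinear range --- is sound and essentially matches the paper.
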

In order to simplify the presentation, we put
$$ \cE = \begin{cases} x &\text{if the generalized Lindelöf hypothesis is assumed,} \\
x^{1/k} & \text{unconditionally.}
\end{cases} $$
To handle the small conductor case, we require the following.
\begin{lemme}\label{lemme:Tk-SW}
For some~$\delta>0$ and any non-principal character~$\chi\mod{q}$ with~$q\leq x$, of conductor~$r\leq \cE^\delta$ we have
$$ \sum_{n\leq x}\tau_k(n)\chi(n) \ll_k x\cE^{-\delta} .$$
\end{lemme}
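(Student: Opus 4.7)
The plan is to apply Perron's formula and a contour shift. Since $\chi\mod q$ is non-principal, the primitive character $\chi^*$ inducing it has conductor $r\geq 2$, so $L(s,\chi^*)$ is entire, and $L(s,\chi)=L(s,\chi^*)\prod_{p\mid q,\,p\nmid r}(1-\chi^*(p)p^{-s})$; the correcting finite Euler product is bounded by $q^{O(\ee)}$ uniformly for $\Re(s)\geq 1/2$. I would start from the truncated Perron formula with abscissa $c=1+1/\log x$ and height $T\geq 1$ to be chosen,
$$ \sum_{n\leq x}\tau_k(n)\chi(n) = \frac{1}{2\pi i}\int_{c-iT}^{c+iT} L(s,\chi)^k\frac{x^s}{s}\,ds + O_k\!\left(\frac{x^{1+\ee}}{T}\right), $$
the truncation error being controlled by the classical tail estimate together with Lemma~\ref{lemme:shiu} for the contribution of integers lying in a short interval around $x$.

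Next, I would shift the contour to the line $\Re(s)=\sigma_0\in(0,1)$. The integrand is meromorphic with a single pole at $s=0$ (and no pole at $s=1$ since $\chi$ is non-principal); since $\sigma_0>0$, this pole is not crossed, and I incur only the vertical integral on $\Re(s)=\sigma_0$ together with two horizontal segments at $\Im(s)=\pm T$. To control these pieces, I would use either (i) the Lindel\"of hypothesis bound $|L(\sigma+it,\chi^*)|\ll (r(|t|+2))^\ee$ valid for $\sigma\geq 1/2$, or (ii) the Phragm\'en--Lindel\"of convexity bound $|L(\sigma+it,\chi^*)|\ll (r(|t|+2))^{(1-\sigma)/2+\ee}$ for $1/2\leq\sigma\leq 1$ unconditionally. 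The horizontal segments admit the same input with the additional saving $1/T$, and end up smaller than the vertical integral for any reasonable choice of parameters.

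Under GLH, taking $\sigma_0=1/2$ and $T=x$ yields a bound of order $x^{1/2}(rT)^{k\ee}$ for the shifted integral, which for $r\leq x^\delta$ with $\delta$ small is easily $\ll x^{1-\delta}=x\cE^{-\delta}$. Unconditionally, convexity yields $|L(s,\chi)|^k\ll (rT)^{k(1-\sigma_0)/2+k\ee}q^{O(k\ee)}$ on $\Re(s)=\sigma_0$, so the vertical integral is at most $O\bigl(x^{\sigma_0}(rT)^{k(1-\sigma_0)/2+k\ee}\log T\bigr)$. Choosing $\sigma_0=1-1/k$, $T=x^{(2-c\delta)/k}$ for a small absolute $c$, and using $r\leq \cE^\delta=x^{\delta/k}$, a direct computation gives the desired bound $\ll x^{1-\delta/k}=x\cE^{-\delta}$, and the Perron truncation error $x^{1+\ee}/T\asymp x^{1-(2-c\delta)/k}$ is of smaller order. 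The main obstacle is precisely the unconditional $k$-dependence: with only convexity, the growth of $L^k$ on the shifted line exactly saturates the gain $x^{-(1-\sigma_0)}$ produced by moving the contour, and the best one can extract is a saving of order $1/k$ in the exponent. Removing this $k$-dependence (as envisaged in Theorem~\ref{thm:Tk-cond}) would require a subconvex bound for $L(s,\chi)$ in the conductor aspect, which is the analytic bottleneck flagged in the introduction.
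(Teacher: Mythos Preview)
Your approach is essentially the same as the paper's: Perron's formula, a contour shift to some $\sigma_0<1$, and the convexity (resp.\ Lindel\"of) bound for $L(s,\chi)$. The paper makes the more economical choice $\sigma_0=1-\delta/k$ and $T=x^{\delta/k}$, while you push further to $\sigma_0=1-1/k$ with $T\approx x^{2/k}$; both work.

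One numerical slip: with $\sigma_0=1-1/k$, the vertical integral is
\[
\ll x^{1-1/k}(rT)^{1/2+\ee} \ll x^{1+(1-c)\delta/(2k)+\ee}
\]
when $r\leq x^{\delta/k}$ and $T=x^{(2-c\delta)/k}$. For this to be $\ll x^{1-\delta/k}$ you need $(1-c)/2\leq -1$, i.e.\ $c\geq 3$, not a ``small absolute $c$''. With $c=3$ the Perron error $x^{1+\ee}/T=x^{1-(2-3\delta)/k+\ee}$ is still acceptable for $\delta<1/2$, so the argument goes through after this correction.
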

\begin{proof}
Starting from the representation
$$ \sum_{n\leq x}\tau_k(n)\chi(n) = \frac1{2\pi i}\int_{1+1/(\log x)-i\infty}^{1+1/(\log x)+i\infty} L(s, \chi)^k \frac{x^s\dd s}{s} \qquad (x\not\in\bfN),$$
one may truncate the contour at~$T=x^{\delta/k}$, and shift it to the abscissa~$\Re(s)=1-\delta/k$. The convexity bound~$|L(1-\delta/k+it, \chi)|\ll q^\ee(r(|t|+1))^{c\delta/k+\ee}$ (for some~$c>0$) yields the desired estimate if~$\cE=x^{1/k}$. If the Lindelöf hypothesis~$ L(\tfrac12 + it, \chi) \ll (q(|t|+1))^{\ee}$ is true, then one chooses~$T=x^{\delta}$ and shifts the contour to~$\Re(s)=1-\delta$, where the bound~$L(1-\delta+it, \chi) \ll (q(|t|+1))^\ee$ holds by convexity.
\end{proof}

\subsubsection{Small conductors}

Let~$\cS_0$ denote the quantity in the left-hand side of~\eqref{eq:majo-distrib-Tk}, and let~$R\leq \cE^{\delta}$. The contribution of those characters~$\chi$ having conductors at most~$R$ is
$$ \sum_{1<r\leq R}\ssum{\chi\mod{r} \\ \chi\prim}\bar{\chi(a)}\ssum{q\leq Q \\ (q, a)=1 \\ r|q}\frac1{\vphi(q)}\ssum{n\leq x \\ (n, q)=1} \tau_k(n)\chi(n). $$
By Lemma~\ref{lemme:Tk-SW} applied to the character~$\mod{q}$ induced by~$\chi$, we have a bound
$$ x \cE^{-\delta} \sum_{r\leq R}\ssum{\chi\mod{r} \\ \chi\prim}\ssum{q\leq Q \\ r|q}\frac1{\vphi(q)} \ll x\cE^{-\delta} R(\log x)^2 .$$
Letting~$R=\cE^{\delta/2}$, this is an acceptable error term. There remains to bound
$$ \cS_1 := \ssum{q\leq Q \\ (q, a)=1}\sum_{n\leq x}\tau_k(n)\cu_R(n\bar{a} ; q). $$

\subsubsection{Dyadic decomposition}

We dyadically decompose in~$\cS_1$ the sums over~$q$ and~$n$ in~\eqref{eq:majo-distrib-Tk}, yielding an upper bound
\begin{equation}
\cS_1 \ll (\log x)^2 \sup_{\substack{Q'\leq x^{1/2+\eta} \\ N\leq x}} \Big|\ssum{Q'<q\leq 2Q' \\ (q, a)=1} \ssum{N< n \leq 2N} \tau_k(n) \cu_R(n\bar{a} ; q) \Big| .\label{eq:somme-Tk-dicho}
\end{equation}
Let~$\eta>0$ and assume throughout that~$\delta$ is small with respect to~$\eta$. When~$N \leq x^{1-\eta}$, by the triangle inequality, our trivial bound~\eqref{eq:tv-cuR} and Lemma~\ref{lemme:shiu}, the sum over~$q$ and~$n$ above is~$O_k(x^{1-\eta/2})$, so we may add the restriction~$N>x^{1-\eta}$ in the supremum with an acceptable error. Then we relax the condition~$Q'\leq x^{1/2+\eta}$ into~$Q'\leq N^{1/2+2\eta}$. Renaming~$N$ into~$x$, and expanding out~$\tau_k(n)$, we obtain that it will suffice to prove
\begin{equation}
\cS_2 := \ssum{Q<q\leq 2Q \\ (q, a)=1} \ssum{x< n_1\dotsb n_k \leq 2x} \cu_R(n_1\dotsb n_k \bar{a} ; q) \ll x\cE^{-\eta}\label{eq:Tk-apres-dicho}
\end{equation}
under the constraints~$|a|\leq x^{2\eta}$ and~$Q\leq x^{1/2+2\eta}$.
We decompose the sums over~$n_1, \dotsc, n_k$ dyadically to obtain an upper bound
\begin{equation}
\cS_2 \ll \cS_3 := (\log x)^k \sup_{N_1, \dotsc, N_k \geq 1/2} \Big| \ssum{Q<q\leq 2Q \\ (q, a)=1} \ssum{x< n_1\dotsb n_k \leq 2x \\ N_j <n_j\leq 2N_j} \cu_R(n_1\dotsb n_k \bar{a} ; q) \Big|.\label{eq:Tk-apres-dicho-2}
\end{equation}

\subsubsection{Splitting cases}

Let the parameter~$0<\delta_1<1/100$ be fixed. We separate into two cases according to whether there is a subset~$\cJ\subset\{1, \dotsc, k\}$ such that
$$ \prod_{j\in\cJ} N_j\in (x^{\delta_1}, x^{1/3-\delta_1}], $$
or not. Suppose there is no such subset, and let
$$ \cK := \{j:\ 1\leq j\leq k,\ N_j> x^{1/3-\delta_1} \}. $$
Necessarily~$\card\cK\leq 3$. Since~$N_j\leq x^{\delta_1}$ for each~$j\not\in\cK$, and by assumption there is no subset~$\cL\subset\{1, \dotsc, k\}\smallsetminus\cK$ such that~$\prod_{j\in\cL} N_j \in (x^{\delta_1}, x^{1/3-\delta_1}]$, it is necessarily the case that
$$ \prod_{j\not\in\cK}N_j \leq x^{\delta_1} .$$
This implies~$\card\cK\geq 1$. Define
$$ \cW := \{(u_n)\in\bfC^\bfN :\ |u_n|\leq \tau(n)^k \quad (n\geq 1)\}. $$
Summarizing the above, we have
\begin{equation}
\cS_3 \ll_{k,\ee} x^\ee(\cA + \cB_3 + \cB_2 + \cB_1 ),\label{eq:Tk-S3}
\end{equation}
where
$$ \cA = \sup_{\substack{x^{\delta_1} < N \leq x^{1/3-\delta_1} \\ MN=x \\(\alpha_m), (\beta_n)\in\cW }} \Big| \ssum{Q<q\leq 2Q \\ (q, a)=1} \ssum{N<n\leq 2^kN \\ M2^{-k} <m < 2M \\ x<mn\leq 2x} \alpha_m\beta_n \cu_R(n m \bar{a} ; q) \Big|, $$
$$ \cB_3 = \sup_{\substack{N_1, N_2, N_3 > x^{1/3-\delta_1} \\  MN_1N_2N_3=x \\ (\alpha_m)\in\cW }} \Big| \ssum{Q<q\leq 2Q \\ (q, a)=1} \ssum{N_j<n_j\leq 2N_j \\ M/8 <m \leq 2M \\ x<mn_1n_2n_3 < 2x} \alpha_m \cu_R(n_1n_2n_3 m \bar{a} ; q) \Big|, $$
$$ \cB_2 = \sup_{\substack{N_1, N_2 > x^{1/3-\delta_1} \\ N_1N_2 > x^{1-\delta_1} \\ MN_1N_2=x \\ (\alpha_m)\in\cW }} \Big| \ssum{Q<q\leq 2Q \\ (q, a)=1} \ssum{N_j<n_j\leq 2N_j \\ M/8 <m \leq 2M \\ x<mn_1n_2 < 2x} \alpha_m \cu_R(n_1n_2 m \bar{a} ; q) \Big|, $$
$$ \cB_1 = \sup_{\substack{N > x^{1-\delta_1} \\  MN=x \\ (\alpha_m)\in\cW }} \Big| \ssum{Q<q\leq 2Q \\ (q, a)=1} \ssum{N<n\leq 2 \\ M/8 <m \leq 2M \\ x<mn < 2x} \alpha_m \cu_R(nm \bar{a} ; q) \Big|. $$
We will focus on~$\cA$ and~$\cB_3$, since the treatment of~$\cB_1$ and~$\cB_2$ is analogous to~$\cB_3$ and actually simpler.

\subsubsection{Separation of variables}

Fix another small parameter~$\delta_2>0$. We smoothen the cutoff using a smooth function~$\phi:\bfR\to[0, 1]$ with~$\phi(\xi) = 1$ for~$\xi\in[1, 2]$,~$\phi(\xi)=0$ for~$\xi\not\in[1-\cE^{-\delta_2}, 2+\cE^{-\delta_2}]$, whose derivatives satisfy~$\|\phi^{(j)}\|_\infty \ll_j \cE^{j\delta_2}$. The cost of replacing in~$\cA$ and $\cB_3$ the sharp cutoff condition~$x<nm\leq 2x$ (resp. $x<n_1n_2n_3m\leq2x$) by~$\phi(nm/x)$ (resp.~$\phi(n_1n_2n_3m/x)$) is at most~$O(x\cE^{-\delta_2/2})$, by trivially bounding the contribution of the transition ranges using~Lemma~\ref{lemme:shiu}.

Integration by parts shows that the Mellin transform~${\breve \phi}(s) = \int_0^\infty \phi(\xi)\xi^{s-1}\dd\xi$ satisfies
$$ {\breve \phi}(it) \ll \frac{\cE^{5\delta_2}}{1+|t|^5} \qquad (t\in\bfR). $$
We use the inversion formula~$\phi(\xi) = (2\pi)^{-1}\int_\bfR {\breve \phi}(it) \xi^{-it}\dd t$ at~$\xi = nm/x$ (resp.~$\xi=mn_1n_2n_3/x$) in the case of~$\cA$ (resp.~$\cB_3$), to obtain the upper bounds
\begin{equation}
\cA \ll_k x\cE^{-\delta_2/2} + \cE^{5\delta_2} \sup_{\substack{x^{\delta_1}<N\leq x^{1/3-\delta_1},\\ MN=x\\ (\alpha_m), (\beta_n)\in\cW}}
\Big|\ssum{Q<q\leq 2Q \\ (q, a)=1} \ssum{N<n\leq 2^kN \\ M2^{-k}<m\leq 2M}\alpha_m\beta_n \cu_R(mn\bar{a} ; q) \Big|,\label{eq:Tk-cas1}
\end{equation}
\begin{equation}
\begin{aligned}
\cB_3 \ll_k x\cE^{-\delta_2/2} + \cE^{5\delta_2} & \sup_{\substack{N_1, N_2, N_3>x^{1/3-\eta}, \\ (\alpha_m) \in\cW, \ t\in\bfR}}\frac1{1+|t|^3} \times \\ &\times \Big|\ssum{Q<q\leq 2Q \\ (q, a)=1} \ssum{N_j<n_j\leq 2N_j \\ M/8<m\leq 2M} \alpha_m(n_1n_2n_3)^{it} \cu_R(n_1n_2n_3m\bar{a} ; q)
\Big|.
\end{aligned}\label{eq:Tk-cas2}
\end{equation}

\subsubsection{The case of~$\cA$}

Let~$(\alpha_m)$, $(\beta_n)$ and~$N$ be given as in the supremum in~\eqref{eq:Tk-cas1}. We wish to bound
\begin{equation}
\cS_a := \ssum{Q<q\leq 2Q \\ (q, a)=1} \ssum{N<n\leq 2^kN \\ M2^{-k} < m< 2M} \alpha_m\beta_n \cu_R(mn\bar{a}; q). \label{eq:Tk-cas1-2}
\end{equation}
By dyadic decomposition, enlarging our bound by a factor of~$k^2$, we may assume the conditions are~$N_1<n\leq 2N_1$ and~$M_1<m\leq 2M_1$ for~$M_1N_1 \in[x2^{-k}, x2^{k+1}]$. Suppose first~$Q\geq x^{1/2-\delta_1/2}$. Then Theorem~\ref{thm:distrib-convo} with~$\eta\gets \min\{\delta_1/2, 1/30\}$ gives the existence of~$\delta_3>0$ depending on~$\delta_1$ such that~\eqref{eq:Tk-cas1-2} is majorized by~$O(2^kx\cE^{-\delta_3})$, on the condition that~$|a|\leq 2^{-k}x^{\delta_3}$ and~$Q\leq 2^{-k}x^{1/2+\delta_3}$, which are satisfied assuming~$\eta<\delta_3/4$ and taking~$x$ large enough in terms of~$k$.

If on the contrary~$Q\leq x^{1/2-\delta_1/2}$, we appeal to Lemma~\ref{lemme:motohashi}. We again obtain for~\eqref{eq:Tk-cas1-2} a bound
$$ \cS_a \ll_j 2^k x\cE^{-\delta_3} $$
for some~$\delta_3$ (depending on~$\delta_1$).

Summarizing, we have obtained in any case
\begin{equation}
\cA \ll_k x\cE^{-\delta_2/2}+x\cE^{5\delta_2 - \delta_3}\label{eq:Tk-majo-A}
\end{equation}
for~$\delta_3>0$. Choosing~$\delta_2$ appropriately, it is an acceptable error term once we can prove that~$\delta_1>0$ can be chosen independently of~$k$.

\subsubsection{The case of~$\cB_3$}

Let~$(\alpha_m)$, $N_1, N_2, N_3>x^{1/3-\delta_1}$ and~$t\in\bfR$ be as in supremum in~\eqref{eq:Tk-cas2}. The quantity we wish to bound is at most
$$ \cS_b := \frac1{1+|t|^3}\sum_{M/8\leq m \leq 2M}\ssum{Q<q\leq 2Q\\(q, am)=1}\Big| \ssum{n_1, n_2, n_3 \\ N_j \leq n_j \leq 2N_j} (n_1n_2n_3)^{it} \cu_R(n_1n_2n_3m\bar{a} ; q)\Big| $$
where~$N_1N_2N_3M = x$ and~$M < x^{3\delta_1}$. Writing~$n_j^{it} = (2N_j)^{it} - it\int_{n_j}^{2N_j} z^{it-1}\dd z$, the above is bounded by
\begin{equation}
\cS_b \ll_\ee \sup_{\substack{N_1', N_2', N_3' \\ N_j < N_j' \leq 2N_j}} \sum_{M/8\leq m \leq M}\ssum{Q<q\leq 2Q\\(q, am)=1}\Big| \ssum{n_1, n_2, n_3 \\ N_j \leq n_j \leq N_j'} \cu_R(n_1n_2n_3m\bar{a} ; q)\Big|\label{eq:Tk-cas2-2}
\end{equation}
Fix~$N_1', N_2', N_3'$ as in the supremum. Using~\eqref{eq:expr-cuR-cu1} and the triangle inequality,
$$ \cS_b \leq \cS_b' + \cS_b'', $$
where
\begin{equation}
\cS_b' = \sum_{M/8\leq m \leq M}\ssum{Q<q\leq 2Q\\(q, am)=1}\Big| \ssum{n_1, n_2, n_3 \\ N_j \leq n_j \leq N_j'} \cu_1(n_1n_2n_3m\bar{a})\Big|,\label{eq:Tk-cas2-u1}
\end{equation}
\begin{equation}
\cS_b'' = \sum_{M/8<m\leq M}\sum_{Q < q\leq 2Q} \frac1{\vphi(q)}\ssum{\chi\mod{q} \\ 1<\cond(\chi)\leq R} \prod_{j=1}^3 \Big|\sum_{N_j<n\leq N_j'} \chi(n) \Big|.\label{eq:Tk-cas2-u2}
\end{equation}
To~$\cS_b'$ we apply~\cite[Lemma~2]{BFI2} for each~$q$ individually (note that this is a very deep result~\cite{FrIw3,HB}, relying on Deligne's proof of the Weil conjectures~\cite{Deligne}). For some small, absolute~$\delta_4$, on the condition that~$Q\leq x^{1/2+\delta_4}$ (requiring~$\eta<\delta_4/2$), the quantity~\eqref{eq:Tk-cas2-u1} is bounded by
\begin{equation}
\cS_b' \ll Mx^{1-\delta_4}  \leq x^{1-\delta_4+3\delta_1}.\label{eq:Tk-cas2-contribu1}
\end{equation}
Consider then~$\cS_b''$. By Lemma~\ref{lemme:PV}, each sum over~$n$ is bounded by~$O_\ee(x^\ee R^{1/2})$, and so we obtain a bound
$$ \cS_b'' \ll_\ee x^\ee R^{5/2}M $$
which is absorbed in the term~\eqref{eq:Tk-cas2-contribu1}. Inserting in~\eqref{eq:Tk-cas2}, we have obtained for~$\cB_3$ a bound
\begin{equation}
\cB_3 \ll x\cE^{-\delta_2} + \cE^{5\delta_2}x^{1-\delta_4 + 3\delta_1}.\label{eq:Tk-majo-B}
\end{equation}
The terms~$\cB_2$ and~$\cB_1$ are shown in the same way to satisfy the same bound with~$\delta_4>0$ absolute and small enough. Choosing our parameters adequately, we can choose absolute constants~$\delta_1$,~$\delta_2$,~$\delta_3$ in such a way that both bounds~\eqref{eq:Tk-majo-B} and~\eqref{eq:Tk-majo-A} are true and~$O(x \cE^{-\eta})$. Inserting back into~\eqref{eq:Tk-S3} and~\eqref{eq:Tk-apres-dicho-2}, we obtain the claimed bound~\eqref{eq:Tk-apres-dicho}.

\subsection{Proof of Theorems~\ref{thm:Tk} and~\ref{thm:Tk-cond}}

As a last step, we deduce from Theorem~\ref{thm:distrib-tauk} the estimate
\begin{equation}
\ssum{q\leq \sqrt{x} \\ (q, a)=1} \Big( \ssum{n\leq q^2 \\ n\equiv a\mod{q}}\tau_k(n) - \frac1{\vphi(q)}\ssum{n\leq  q^2 \\ (n, q)=1} \tau_k(n)\Big) \ll_k x\cE^{-\eta} \qquad (0<|a|\leq x^{\eta}) \label{eq:Tk-q2}
\end{equation}
where as before~$\cE=x$ if the generalized Lindelöf is true and~$\cE=x^{1/k}$ otherwise. Let~$\Delta\in(0, 1/10)$ be fixed and decompose the sums over~$q$ and~$n$ into intervals~$((1+\Delta)^{-1}Q, Q]$ and~$((1+\Delta)^{-1}N, N]$. Calling~$\cS'_1$ the left-hand side of~\eqref{eq:Tk-q2}, we have
$$ \cS'_1 \ll \ssum{j_0, j_1 \geq 0 \\ Q = (1+\Delta)^{-j_0}\sqrt{x}\\ N = (1+\Delta)^{-j_1}x } \Big|\ssum{(1+\Delta)^{-1}Q<q\leq Q} \ssum{(1+\Delta)^{-1}N<n\leq N \\ n\leq q^2} \tau_k(n)\cu_1(n\bar{a} ; q)\Big|, $$
where we used the notation~\eqref{eq:def-cu1}. The inner sums are void if~$Q^2\leq N$ and the condition~$n\leq q^2$ is automatically satisfied if~$N \leq Q^2(1+\Delta)^{-2}$. The contribution of~$j_0, j_1$ such that~$(1+\Delta)^{-2}Q^2\leq N\leq Q^2$ is at most
$$ \ssum{q\leq \sqrt{x} \\ (q, a)=1} \sum_{q^2(1+\Delta)^{-3} \leq n\leq q^2(1+\Delta)^2} \tau_k(n)|\cu_1(n\bar{a} ; q)| \ll \Delta x (\log x)^k $$
by virtue of Lemma~\ref{lemme:shiu}. Therefore
$$ \cS'_1 \ll \Delta x (\log x)^k + (\log x)^2\Delta^{-2} \sup_{\substack{Q\leq \sqrt{x} \\ N\leq Q^2}} \Big| \ssum{(1+\Delta)^{-1}Q<q\leq Q} \ssum{(1+\Delta)^{-1}N<n\leq N} \tau_k(n)\cu_1(n\bar{a} ; q)\Big|. $$
Let~$Q$,~$N$ be as in the supremum, and let~$\eta>0$ be the real number given by Theorem~\ref{thm:distrib-tauk}. Lemma~\ref{lemme:shiu} gives the bound
$$ \Big| \ssum{(1+\Delta)^{-1}Q<q\leq Q} \ssum{(1+\Delta)^{-1}N<n\leq N} \tau_k(n)\cu_1(n\bar{a} ; q)\Big| \ll_\ee x^\ee N $$
which is acceptable if~$N\leq x^{1-\eta/10}$. Suppose~$N\geq x^{1-\eta/10}$, then Theorem~\ref{thm:distrib-tauk} applies with~$x\gets N$ and yields a bound~$O(x\cE^{-\eta/10})$ for~$|a|\leq x^{\eta/10}$. Therefore,
$$ \cS'_1 \ll_{\ee, k} x^{1+\ee}\Delta + \Delta^{-2}x^{1+\ee}\cE^{-\eta/10} .$$
Taking \textit{e.g.}~$\Delta = \cE^{-\eta/30}$ and reinterpreting~$\eta$, we have the claimed estimate~\eqref{eq:Tk-q2}.

\medskip

From the Dirichlet hyperbola method, Theorem~\ref{thm:distrib-tauk} and estimate~\eqref{eq:Tk-q2}, we deduce
\begin{align*}
\cT_k(x) &\ = 2\sum_{q\leq \sqrt{x}} \ssum{q^2< n\leq x \\ n\equiv -1\mod{q}}\tau_k(n) + O_\ee(x^{1/2+\ee}) \\
&\ = 2\ssum{q\leq \sqrt{x}} \frac{1}{\vphi(q)}\ssum{q^2< n\leq x \\ (n, q)=1}\tau_k(n) + O(x\cE^{-\delta})
\end{align*}
The main terms are computed in~\cite[Théorème~2]{FT-Piltz}, with an error term~$O(x^{1-\delta/k})$ (unconditionally). If one assumes the generalized Lindelöf hypothesis, then the proof is adapted in the following way. Under the hypotheses and in the notations of~\cite[Lemma~6]{FT-Piltz}, there holds~$|\theta(p^\nu)|\leq Cp^{-\delta}{k \choose \floor{k/2}}$ (\cite[first display page~52]{FT-Piltz}). Therefore the series~$F_k(s)$ in~\cite[Lemma~7]{FT-Piltz} is bounded in terms of~$k$ only in the half-plane~$\Re(s)\geq 1-\delta/2$. In the proof of~\cite[Lemma~7]{FT-Piltz}, one chooses~$T=x^{\delta/2}$ and shift the contour to~$\Re(s)=1-\delta/2$, where the Lindelöf hypothesis implies~$\zeta(s)\ll t^\ee$ by convexity, to produce the conclusion
$$ \sum_{n\leq x}\Psi(n)\tau_k(n) = xQ_{k-1}(\log x) + O_{\ee, k}(x^{1-\delta/2+\ee}). $$
The rest of the argument in Corollaries~1-2 of Lemma~7, and Corollary of Lemma~8 of~\cite{FT-Piltz} are transposed \textit{verbatim} to yield
$$ 2\ssum{q\leq \sqrt{x}} \frac{1}{\vphi(q)}\ssum{q^2< n\leq x \\ (n, q)=1}\tau_k(n) = xP_k(\log x) + O_k(x^{1-c}) $$
for some~$c>0$, as claimed.

\subsection{Remark on the uniformity in~$a$}
\label{sec:remark-uniformity-a}

If we were to replace the shift~$\tau(n+1)$ by~$\tau(n+a)$, $0<|a|\leq x^{\delta}$, then the deduction of an asymptotic formula analogous to~\eqref{eq:estim-Tk} from Theorem~\ref{thm:distrib-tauk} would go along similar lines. We briefly indicate how one reduces to our previous setting. From Dirichlet's hyperbola method, the problem reduces to the evaluation of
$$ \cS_{k, a}(x) = 2\sum_{q\leq \sqrt{x}} \ssum{q^2 \leq n\leq x \\ n\equiv -a \mod{q}} \tau_k(n). $$
Extracting the largest factor~$d_1|a^\infty$ from~$n$, we rewrite this as
$$ \cS_{k, a}(x) = 2\sum_{d_1|a^\infty} \tau_k(d_1) \sum_{q\leq \sqrt{x}} \ssum{q^2/d_1\leq n\leq x/d_1 \\ (n, a)=1 \\  nd_1 \equiv -a \mod{q}} \tau_k(n). $$
Writing~$d_2 := (q, d_1)$, the congruence condition is equivalent to~$d_2|a$ and
$$ n\equiv -(a/d_2)\bar{(d_1/d_2)}\mod{q/d_2}. $$
We therefore have
$$ \cS_{k, a}(x) = 2\sum_{d_1|a^\infty} \tau_k(d_1) \sum_{d_2|(d_1, a)} \ssum{q\leq \sqrt{x}/d_2 \\ (q, d_1/d_2)=(q, a/d_2) = 1} \ssum{q^2/d_1\leq n\leq x/d_1 \\ (n, a)=1 \\ n \equiv -(a/d_2)\bar{(d_1/d_2)} \mod{q}} \tau_k(n). $$
Summing for each~$d_j$ individually, the contribution of~$d_1 > x^\delta$ is bounded trivially using Lemma~\ref{lemme:shiu}. When~$d_1\leq x^\delta$, the sum over~$n$ and~$q$ is handled by an adequate generalization of Theorem~\ref{thm:distrib-tauk}, involving a congruence of the type~$n\equiv b_1\bar{b_2}\mod{q}$, as well as an additional coprimality condition~$(n, b_3)=1$, for integers~$|b_j|\leq x^\delta$. Our arguments readily adapt to account for both these modifications. Note however that it is now important that the method is able to handle values of the modulus~$q$ up to~$x^{1/2+\delta}$, with~$\delta$ independent of~$k$ (\textit{cf.} the statement of Theorem~\ref{thm:distrib-tauk}).

\bibliographystyle{amsalpha}
\bibliography{divtm-siegel}

\newcommand{\etalchar}[1]{$^{#1}$}
\providecommand{\bysame}{\leavevmode\hbox to3em{\hrulefill}\thinspace}
\providecommand{\MR}{\relax\ifhmode\unskip\space\fi MR }
\providecommand{\MRhref}[2]{%
  \href{http://www.ams.org/mathscinet-getitem?mr=#1}{#2}
}
\providecommand{\href}[2]{#2}
\begin{thebibliography}{BHM07b}

\bibitem[ABSR15]{ABR}
J.~C. Andrade, L.~Bary-Soroker, and Z.~Rudnick, \emph{Shifted convolution and
  the {T}itchmarsh divisor problem over~{${\mathbb F}_q[t]$}}, Philos. Trans. A
  Roy. Soc. London \textbf{373} (2015), no.~2040, to appear.

\bibitem[BFI86]{BFI}
E.~Bombieri, J.~B. Friedlander, and H.~Iwaniec, \emph{Primes in arithmetic
  progressions to large moduli}, Acta Math. \textbf{156} (1986), no.~3-4,
  203--251.

\bibitem[BFI87]{BFI2}
\bysame, \emph{Primes in arithmetic progressions to large moduli. {II}}, Math.
  Ann. \textbf{277} (1987), no.~3, 361--393.

\bibitem[BHM07a]{BHM}
V.~Blomer, G.~Harcos, and P.~Michel, \emph{Bounds for modular {$L$}-functions
  in the level aspect}, Ann. Sci. \'Ecole Norm. Sup. (4) \textbf{40} (2007),
  no.~5, 697--740.

\bibitem[BHM07b]{BHM-Burgess}
\bysame, \emph{A {B}urgess-like subconvex bound for twisted {$L$}-functions},
  Forum Math. \textbf{19} (2007), no.~1, 61--105, Appendix 2 by Z. Mao.

\bibitem[BM15a]{BM}
V.~Blomer and D.~Mili{\'c}evi{\'c}, \emph{Kloosterman sums in residue classes},
  J. Eur. Math. Soc. \textbf{17} (2015), no.~1, 51--69.

\bibitem[BM15b]{BM2}
\bysame, \emph{The second moment of twisted modular {$L$}-functions}, Geom.
  Func. Anal. (2015), to appear.

\bibitem[BV69]{BarbanVekhov}
M.~B. Barban and P.~P. Vehov, \emph{Summation of multiplicative functions of
  polynomials}, Mat. Zametki \textbf{5} (1969), 669--680.

\bibitem[BV87]{BV}
V.~A. Bykovski{\u\i} and A.~I. Vinogradov, \emph{Inhomogeneous convolutions},
  Zap. Nauchn. Sem. Leningrad. Otdel. Mat. Inst. Steklov. (LOMI) \textbf{160}
  (1987), no.~Anal. Teor. Chisel i Teor. Funktsii. 8, 16--30, 296.

\bibitem[CG01]{CG}
J.~B. Conrey and S.~M. Gonek, \emph{High moments of the {R}iemann
  zeta-function}, Duke Math. J. \textbf{107} (2001), no.~3, 577--604.

\bibitem[dB62]{deBruijn}
N.~G. de~Bruijn, \emph{On the number of integers {$\leq x$} whose prime factors
  divide {$n$}.}, Illinois J. Math. \textbf{6} (1962), 137--141.

\bibitem[Del74]{Deligne}
P.~Deligne, \emph{La conjecture de {W}eil. {I}}, Inst. Hautes \'Etudes Sci.
  Publ. Math. (1974), no.~43, 273--307.

\bibitem[Des82]{Deshouillers}
J.-M. Deshouillers, \emph{Majorations en moyenne de sommes de {K}loosterman},
  Seminar on {N}umber {T}heory, 1981/1982, Univ. Bordeaux I, Talence, 1982,
  pp.~Exp. No. 3, 5.

\bibitem[DFI02]{DFI}
W.~Duke, J.~B. Friedlander, and H.~Iwaniec, \emph{The subconvexity problem for
  {A}rtin {$L$}-functions}, Invent. Math. \textbf{149} (2002), no.~3, 489--577.

\bibitem[DI82a]{DI-BinDiv}
J.-M. Deshouillers and H.~Iwaniec, \emph{An additive divisor problem}, J.
  London Math. Soc. (2) \textbf{26} (1982), no.~1, 1--14.

\bibitem[DI82b]{DI}
\bysame, \emph{Kloosterman sums and fourier coefficients of cusp forms},
  Invent. Math. \textbf{70} (1982), no.~2, 219--288.

\bibitem[Dra15]{Drappeau}
S.~Drappeau, \emph{Théorèmes de type {F}ouvry--{I}waniec pour les entiers
  friables}, Compos. Math. (2015), to appear.

\bibitem[ES34]{ErdosSzekeres}
P.~Erd{\H{o}}s and G.~Szekeres, \emph{{\"Uber die Anzahl der Abelschen Gruppen
  gegebener Ordnung und \"uber ein verwandtes zahlentheoretisches Problem}},
  {Acta Litt. Sci. Szeged} \textbf{7} (1934), 95--102.

\bibitem[Fel12]{Felix}
A.~T. Felix, \emph{Generalizing the {T}itchmarsh divisor problem}, Int. J.
  Number Theory \textbf{8} (2012), no.~3, 613--629.

\bibitem[FG92]{FG}
J.~B. Friedlander and A.~Granville, \emph{Relevance of the residue class to the
  abundance of primes}, Proceedings of the {A}malfi {C}onference on {A}nalytic
  {N}umber {T}heory ({M}aiori, 1989), Univ. Salerno, Salerno, 1992,
  pp.~95--103.

\bibitem[FI83]{FI}
É.~Fouvry and H.~Iwaniec, \emph{Primes in arithmetic progressions}, Acta Arith.
  \textbf{42} (1983), no.~2, 197--218.

\bibitem[FI85]{FrIw3}
J.~B. Friedlander and H.~Iwaniec, \emph{Incomplete {K}loosterman sums and a
  divisor problem}, Ann. of Math. (2) \textbf{121} (1985), no.~2, 319--350,
  With an appendix by B. J. Birch and E. Bombieri.

\bibitem[FI05]{FrIw2}
\bysame, \emph{Summation formulae for coefficients of {$L$}-functions}, Canad.
  J. Math. \textbf{57} (2005), no.~3, 494--505.

\bibitem[Fio12a]{Fiorilli2}
D.~Fiorilli, \emph{On a theorem of {B}ombieri, {F}riedlander, and {I}waniec},
  Canad. J. Math. \textbf{64} (2012), no.~5, 1019--1035.

\bibitem[Fio12b]{Fiorilli}
\bysame, \emph{Residue classes containing an unexpected number of primes}, Duke
  Math. J. \textbf{161} (2012), no.~15, 2923--2943.

\bibitem[Fou82]{Fouvry82}
\'E. Fouvry, \emph{R\'epartition des suites dans les progressions
  arithmétiques}, Acta Arith. \textbf{41} (1982), no.~4, 359 382.

\bibitem[Fou85]{FouvryTit}
{\'E}.~Fouvry, \emph{Sur le probl\`eme des diviseurs de {T}itchmarsh}, J. Reine
  Angew. Math. \textbf{357} (1985), 51--76.

\bibitem[FT85]{FT-Piltz}
{\'E}.~Fouvry and G.~Tenenbaum, \emph{Sur la corr\'elation des fonctions de
  {P}iltz}, Rev. Mat. Iberoamericana \textbf{1} (1985), no.~3, 43--54.

\bibitem[GR07]{GR}
I.~S. Gradshteyn and I.~M. Ryzhik, \emph{Table of integrals, series, and
  products}, seventh ed., Elsevier/Academic Press, Amsterdam, 2007, Translated
  from the Russian, Translation edited and with a preface by A. Jeffrey and D.
  Zwillinger.

\bibitem[Hal67]{Halberstam}
H.~Halberstam, \emph{Footnote to the {T}itchmarsh-{L}innik divisor problem},
  Proc. Amer. Math. Soc. \textbf{18} (1967), 187--188.

\bibitem[Har11]{Harper-AB}
A.~J. Harper, \emph{On finding many solutions to {S}-unit equations by solving
  linear equations on average}, preprint (2011), arXiv,
  \url{http://arxiv.org/abs/1108.3819}.

\bibitem[HB82]{HB3}
D.~R. Heath-Brown, \emph{Prime numbers in short intervals and a generalized
  {V}aughan identity}, Canad. J. Math. \textbf{34} (1982), no.~6, 1365--1377.

\bibitem[HB86]{HB}
\bysame, \emph{The divisor function {$d_3(n)$} in arithmetic progressions},
  Acta Arith. \textbf{47} (1986), no.~1, 29--56.

\bibitem[Hen12]{Henriot}
K.~Henriot, \emph{Nair-{T}enenbaum bounds uniform with respect to the
  discriminant}, Math. Proc. Cambridge Philos. Soc. \textbf{152} (2012), no.~3,
  405--424.

\bibitem[IK04]{IK}
H.~Iwaniec and E.~Kowalski, \emph{Analytic number theory}, vol.~53, Cambridge
  Univ. Press, 2004.

\bibitem[Iwa82]{Iwaniec-MV}
H.~Iwaniec, \emph{Mean values for {F}ourier coefficients of cusp forms and sums
  of {K}loosterman sums}, Number theory days, 1980 ({E}xeter, 1980), London
  Math. Soc. Lecture Note Ser., vol.~56, Cambridge Univ. Press, Cambridge-New
  York, 1982, pp.~306--321.

\bibitem[Iwa95]{Iwaniec-Book-Automorphic}
\bysame, \emph{Introduction to the spectral theory of automorphic forms},
  Biblioteca de la Revista Matem\'atica Iberoamericana., Revista Matem\'atica
  Iberoamericana, Madrid, 1995.

\bibitem[Iwa97]{Iwaniec-Book-Topics}
\bysame, \emph{Topics in classical automorphic forms}, Graduate Studies in
  Mathematics, vol.~17, American Mathematical Society, Providence, RI, 1997.

\bibitem[Iwa02]{Iwaniec-Book-Spectral}
\bysame, \emph{Spectral methods of automorphic forms}, second ed., Graduate
  Studies in Mathematics, vol.~53, American Mathematical Society, Providence,
  RI; Revista Matem\'atica Iberoamericana, Madrid, 2002.

\bibitem[Kim03]{Kim}
H.~H. Kim, \emph{Functoriality for the exterior square of {${\rm GL}_4$} and
  the symmetric fourth of {${\rm GL}_2$}}, J. Amer. Math. Soc. \textbf{16}
  (2003), no.~1, 139--183, With appendix 1 by D. Ramakrishnan and appendix 2 by
  H. H. Kim and P. Sarnak.

\bibitem[KL13]{KL}
A.~Knightly and C.~Li, \emph{Kuznetsov's trace formula and the {H}ecke
  eigenvalues of {M}aass forms}, Mem. Amer. Math. Soc. \textbf{224} (2013),
  no.~1055, vi+132.

\bibitem[Lin63]{Linnik}
Ju.~V. Linnik, \emph{The dispersion method in binary additive problems},
  Translated by S. Schuur, American Mathematical Society, Providence, R.I.,
  1963.

\bibitem[May15]{Maynard}
J.~Maynard, \emph{Small gaps between primes}, Ann. of Math. (2) \textbf{181}
  (2015), no.~1, 383--413.

\bibitem[Mot76]{Motohashi}
Y.~Motohashi, \emph{An induction principle for the generalization of
  {B}ombieri's prime number theorem}, Proc. Japan Acad. \textbf{52} (1976),
  no.~6, 273--275.

\bibitem[MV07]{MontgomeryVaughan}
H.~L. Montgomery and R.~C. Vaughan, \emph{Multiplicative number theory. {I}.
  {C}lassical theory}, Cambridge Studies in Advanced Mathematics, vol.~97,
  Cambridge University Press, Cambridge, 2007.

\bibitem[PCF{\etalchar{+}}14]{Polymath}
D.~H.~J. Polymath, W.~Castryck, {\'E}.~Fouvry, G.~Harcos, E.~Kowalski,
  P.~Michel, P.~Nelson, E.~Paldi, J.~Pintz, A.~V. Sutherland, T.~Tao, and X.-F.
  Xie, \emph{New equidistribution estimates of {Z}hang type}, Algebra Number
  Theory \textbf{8} (2014), no.~9, 2067--2199.

\bibitem[Pit13]{Pitt}
N.~J.~E. Pitt, \emph{On an analogue of {T}itchmarsh's divisor problem for
  holomorphic cusp forms}, J. Amer. Math. Soc. \textbf{26} (2013), no.~3,
  735--776.

\bibitem[Pro03]{Proskurin}
N.~V. Proskurin, \emph{On general {K}loosterman sums}, Zap. Nauchn. Sem.
  S.-Peterburg. Otdel. Mat. Inst. Steklov. (POMI) \textbf{302} (2003),
  no.~Anal. Teor. Chisel i Teor. Funkts. 19, 107--134, 200.

\bibitem[Rod65]{Rodriquez}
G.~Rodriquez, \emph{Sul problema dei divisori di {T}itchmarsh}, Boll. Un. Mat.
  Ital. (3) \textbf{20} (1965), 358--366.

\bibitem[Sar95]{Sarnak}
P.~Sarnak, \emph{Selberg's eigenvalue conjecture}, Notices Amer. Math. Soc.
  \textbf{42} (1995), no.~11, 1272--1277.

\bibitem[Shi80]{Shiu}
P.~Shiu, \emph{A {B}run-{T}itchmarsh theorem for multiplicative functions}, J.
  Reine Angew. Math. \textbf{313} (1980), 161--170.

\bibitem[Ten95]{Tenenbaum-Intro}
G.~Tenenbaum, \emph{Introduction to analytic and probabilistic number theory},
  Cambridge Studies in Advanced Mathematics, vol.~46, Cambridge University
  Press, Cambridge, 1995, Translated from the second French edition (1995) by
  C. B. Thomas.

\bibitem[Tit30]{Titchmarsh}
E.~C. Titchmarsh, \emph{{A divisor problem.}}, {Rend. Circ. Mat. Palermo}
  \textbf{54} (1930), 414--429.

\bibitem[Tit86]{TM-zeta}
\bysame, \emph{The theory of the {R}iemann zeta-function}, second ed., The
  Clarendon Press, Oxford University Press, New York, 1986, Edited and with a
  preface by D. R. Heath-Brown.

\bibitem[Top15]{Topacogullari}
B.~Topacogullari, \emph{The shifted convolution of divisor functions}, preprint
  (2015), arXiv, \url{http://arxiv.org/abs/1506.02608v1}.

\bibitem[Wei48]{Weil}
A.~Weil, \emph{On some exponential sums}, Proc. Nat. Acad. Sci. U. S. A.
  \textbf{34} (1948), 204--207.

\bibitem[Zha14]{Zhang}
Y.~Zhang, \emph{Bounded gaps between primes}, Ann. of Math. (2) \textbf{179}
  (2014), no.~3, 1121--1174.

\end{thebibliography}

\affiliationone{I2M (UMR 7373) \\
Aix-Marseille Université \\
163 av. de Luminy, Case 901 \\
13009 Marseille (France)
\email{sary-aurelien.drappeau@univ-amu.fr}}

\end{document}